\newtheorem{theorem}{Theorem}
\newtheorem{lemma}[theorem]{Lemma}
\newtheorem{proposition}[theorem]{Proposition}
\newtheorem{corollary}[theorem]{Corollary}
\newtheorem{fact}[theorem]{Fact}
\newtheorem{remark}[theorem]{Remark}
\DeclareMathOperator{\Lip}{Lip}
\newcommand{\CC}{\mathbb{C}}
\newcommand{\EE}{\mathbb{E}}
\newcommand{\NN}{\mathbb{N}}
\newcommand{\PP}{\mathbb{P}}
\newcommand{\RR}{\mathbb{R}}
\newcommand{\ZZ}{\mathbb{Z}}
\newcommand{\sub}{\mathrm{sub}}
\newcommand{\bw}{\boldsymbol{w}}
\newcommand{\bv}{\boldsymbol{v}}
\newcommand{\bu}{\boldsymbol{u}}
\newcommand{\bz}{\boldsymbol{z}}
\newcommand{\bg}{\boldsymbol{g}}
\newcommand{\bi}{\boldsymbol{i}}
\newcommand{\bh}{\boldsymbol{h}}
\newcommand{\bX}{\boldsymbol{X}}
\newcommand{\bx}{\boldsymbol{x}}
\newcommand{\ba}{\boldsymbol{a}}
\newcommand{\vecone}{\boldsymbol{1}}
\newcommand{\vecf}{\boldsymbol{f}}
\newcommand{\vecg}{\boldsymbol{g}}
\newcommand{\cal}{\mathcal}
\newcommand{\cB}{{\cal B}}
\newcommand{\cF}{{\mathcal{F}}}
\newcommand{\cP}{{\mathcal{P}}}
\newcommand{\cQ}{{\mathcal{Q}}}
\newcommand{\cW}{{\cal W}}
\newcommand{\rB}{{\mathrm B}}
\newcommand{\rU}{{\mathrm U}}
\newcommand*\diff{\mathop{}\!\mathrm{d}}
\let\eps=\varepsilon
\title{Quasi-random words and limits of word sequences}
\author[Hiep Han]{Hi\d{\^e}p H\`an}
\address{Departamento de  Matem\'atica y Ciencia de la Computaci\'on, Universidad de Santiago de Chile, Las Sophoras 173, Santiago, Chile}
\email{hiep.han@usach.cl}
\author{Marcos Kiwi}
\address{Departamento de Ingenier\'ia Matem\'atica y Centro de Modelamiento Matem\'atico (CNRS IRL2807), Universidad de Chile, Santiago, Chile}
\email{mkiwi@dim.uchile.cl}
\author{Mat\'ias Pavez-Sign\'e}
\address{Centro de Modelamiento Matem\'atico (CNRS IRL2807), Universidad de Chile, Santiago, Chile}
\thanks{The first author was supported by the FONDECYT Regular grant 1191838.
  The second author gratefully acknowledges the support of
  grant GrHyDy ANR-20-CE40-0002 and ANID via grant PIA AFB170001.
  The third author was supported by ANID Doctoral scholarship ANID-PFCHA/Doctorado Nacional/2017-21171132 and ANID via grant PIA AFB170001.}
\email{mpavez@dim.uchile.cl}
\date{}
\begin{document}

\begin{abstract}
Words are sequences of letters over a finite alphabet. We 
study two intimately related topics for this object:
quasi-randomness and limit theory.  With respect to the first topic
we investigate the notion of uniform distribution of letters over
intervals, and in the spirit of the famous Chung--Graham--Wilson theorem
for graphs we provide a list of word properties which are equivalent
to uniformity.  In particular, we show that uniformity is equivalent
to counting 3-letter subsequences.

Inspired by graph limit theory we then
investigate limits of convergent word sequences, those in which all
subsequence densities converge.  We show that convergent word
sequences have a natural limit, namely Lebesgue measurable functions
of the form $f:[0,1]\to[0,1]$.  Via this theory we show that every
hereditary word property is testable, address the problem of finite
forcibility for word limits and establish as a byproduct a new model
of random word sequences.

Along the lines of the proof of the existence of word limits, we can
 also establish the existence of
limits for higher dimensional structures. In particular, we obtain an
alternative proof of 
  the result by
  Hoppen, Kohayakawa, Moreira, R\'ath and Sampaio [{\it J. Combin. Theory Ser. B 103(1):93--113, 2013}] establishing the existence
  of permutons.

\end{abstract}
\maketitle


\section{Introduction}
%
Roughly speaking, quasi-random structures
are deterministic objects which share
many characteristic properties of their random counterparts. 
Formalizing this concept has turned out to be tremendously fruitful
in several areas, among others, number theory, graph theory, extremal
combinatorics, the design of algorithms and complexity theory.  This often follows from
the fact that if an object is quasi-random, then it immediately enjoys many other properties satisfied by its random counterpart.

Seminal work on quasi-randomness concerned graphs~\cite{CGW,Rodl86,Thomason}.
Subsequently, other combinatorial objects were considered,
which include subsets of~$\mathbb Z_n$~\cite{CGZn,Gowers},
hypergraphs~\cite{aigner2018quasirandomness,CGhyper,gowers_hyper,Towsner}, finite groups~\cite{gowers_groups}, and permutations~\cite{Cooper}. 
Curiously, in the rich history of quasi-randomness,  \emph{words}, i.e., sequences of letters from a
finite alphabet, one of the most basic combinatorial object with many applications, do not seem to have been explicitly investigated. We 
overcome this apparent neglect, put forth a notion of quasi-random words
and show it is equivalent to several other properties.
\medskip
In contrast to the classical topic of quasi-randomness, the research
of limits for discrete structures was launched rather
recently by Chayes, Lov\'asz, S\'os, Szegedy and Vesztergombi~\cite{BCLSV08,LS1}, and has become a very
active topic of research since. Central to the area is the notion of
convergent graph sequences $(G_n)_{n\to\infty}$, i.e., sequences of
graphs which, roughly speaking, become more and more ``similar'' as
$|V(G_n)|$ grows.
For convergent graph sequences, Lov\'asz and Szegedy~\cite{LS1} show
the existence of natural limit objects, called \emph{graphons},
   endow the space of these structures with a metric and establish
  the equivalence of their notion of convergence and convergence on such
  a metric.
Among many other consequences, it follows that quasi-random
graph sequences, with edge density $p+o(1)$, converge to the constant
$p$ graphon.


\medskip
In this paper, we continue the lines of previously mentioned
investigations and study quasi-randomness for words and limits of
convergent word sequences.  Not only in the 
literature of quasi-randomness but also in the one concerning limits of
discrete structures, explicit investigation of this fundamental object
has not been considered so far.

\section{Main contributions}
A word $\bw$ of length $n$ is an ordered sequence $\bw=(w_1,w_2,\dots,
w_n)$ of letters $w_i\in\Sigma$ from a fixed size alphabet $\Sigma$.
For the sake of presentation, unless explicitly said otherwise,
we restrict our discussion to the two
letter alphabet $\Sigma=\{0,1\}$, but most of our results and their proofs have
straightforward generalizations to finite size alphabets.

\subsection{Quasi-random words}
Concerning  quasi-randomness for words, our central notion is that of uniform distribution of letters over  intervals. 
Specifically, a word $\bw=(w_1\dots w_n)\in\{0,1\}^n$ is called \emph{$(d,\eps)$-uniform} if  for every interval $I\subseteq[n]$
 we have\footnote{We write  $a\pm x$ to denote a number contained in the interval $[a-x,a+x]$.}
\begin{align}\label{eq:defqr}
  \sum_{i\in I}w_i=|\{i\in I\colon w_i=1\}|=d |I|\pm \eps n.
\end{align}
 We say that $\bw$ is \emph{$\eps$-uniform} if $\bw$ is $(d,\eps)$-uniform for some $d$. Thus, uniformity states that up to an error term of $\eps n$  the number  of 
 1-entries of $\bw$ in each interval~$I$ is roughly $d|I|$,
 a property which binomial random words with parameter $d$ satisfy with 
high probability. This notion of uniformity has been studied by Axenovich, Person and Puzynina in~\cite{APP13}, where 
a regularity lemma for words was established and applied to the problem of finding twins in words. 
In a different context, it has been studied  by Cooper~\cite{Cooper} who  gave a list
of equivalent properties. 
A word $(w_1,\dots, w_n)\in \{0,1\}^n$ can also be seen as the set  $W=\{i\colon w_i=1\}\subseteq \mathbb Z_n$ and from this point of view uniformity should be compared to 
the classical notion of quasi-randomness of subsets of $\mathbb Z_n$, studied by Chung and Graham in~\cite{CGZn} and extended to the notion of $U_k$-uniformity by Gowers in~\cite{Gowers}. 
With respect to this line of research we note that our notion of uniformity is strictly weaker than all of the ones studied in \cite{CGZn,Gowers}. 
Indeed, the weakest of them concerns $U_2$-uniformity and may be rephrased as follows:  $W\subseteq \mathbb Z_n$ has $U_2$-norm at most $\eps>0$ if
for all $A\subseteq \mathbb Z_n$ and all but $\eps n$ elements $x\in\mathbb Z$ we have $|W\cap (A+x)|= |W|\frac{|A|}n\pm \eps n$ where $A+x=\{a+x\colon a\in A\}$.
Thus, e.g., the word $0101\dots 01$ is uniform in our sense but its corresponding set does not have small $U_2$-norm.

Analogous to the graph case there is a counting property related to uniformity.
Given a word $\bw=(w_1\dots w_n)$
  and a set of indices $I=\{i_1,\dots,i_\ell\}\subseteq [n]$, where $i_1<i_2<\dots<i_\ell$, 
  let $\sub(I,\bw)$ be the length
  $\ell$ subsequence $\bu=(u_1\dots u_\ell)$
  of $\bw$ such that $u_{j}=w_{i_j}$.
We show that uniformity implies adequate subsequence count, i.e., for any fixed~$\bu$ the number of subsequences equal to~$\bu$ in a large
uniform word~$\bw$, denoted by $\tbinom{\bw}\bu$, is roughly as expected from a random word with same density of 1-entries as~$\bw$.
It is then natural to ask whether the converse also holds and one of our main results concerning quasi-random  words states  that 
 uniformity is indeed already  enforced by  counting of
subsequences of length three. If we let $\|\bw\|_1=\sum_{i\in[n]}w_i$ denote the number of 1-entries in~$\bw$, then our result reads as follows.
\begin{theorem}\label{thm:quasirandom}\mbox{}
For every $\eps>0$, $d\in[0,1],$ and $\ell\in\NN$, there is an $n_0$ such that for all $n>n_0$ the following holds. 
\begin{itemize}
\item 
  If $\bw\in\{0,1\}^n$ is $(d,\eps)$-uniform, then for each
  $\bu\in\{0,1\}^\ell$
  \[
  \tbinom{\bw}\bu=d^{\|\bu\|_1}(1-d)^{\ell-\|\bu\|_1}\tbinom n\ell\pm 5\eps n^\ell.
  \]
\item Conversely, if
  $\bw\in\{0,1\}^n$ is such that for all $\bu\in\{0,1\}^3$ we have \[\tbinom {\bw}\bu=d^{\|\bu\|_1}(1-d)^{3-\|\bu\|_1}\tbinom n3\pm \eps n^3,\]
then $\bw$ is $(d,42\eps^{1/3})$-uniform.
\end{itemize}
\end{theorem}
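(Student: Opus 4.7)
\emph{Plan for the first direction.} I would prove this by induction on $\ell$, starting from the identity
\[
\tbinom{\bw}{u_1\cdots u_{\ell+1}} = \sum_{k=1}^n [w_k=u_{\ell+1}]\,\tbinom{\bw|_{[1,k-1]}}{u_1\cdots u_\ell}.
\]
The base $\ell=1$ is immediate from the definition of $(d,\eps)$-uniformity applied to $I=[n]$. For the inductive step, observe that every prefix $\bw|_{[1,k-1]}$ still satisfies the interval condition with absolute error $\eps n$, so the induction hypothesis yields $\tbinom{\bw|_{[1,k-1]}}{u_1\cdots u_\ell} = q\tbinom{k-1}{\ell} + O\bigl(C_\ell\,\eps n(k-1)^{\ell-1}\bigr)$, with $q=d^{\|u\|_1}(1-d)^{\ell-\|u\|_1}$. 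The main term $\sum_k[w_k=u_{\ell+1}]\tbinom{k-1}{\ell}$ is then evaluated by Abel summation: writing $B_k$ for the partial sum of $[w_k=u_{\ell+1}]$, uniformity gives $B_k = pk\pm\eps n$, and the hockey-stick identity turns the sum into $p\tbinom{n}{\ell+1}\pm 2\eps n^{\ell+1}/\ell!$. Combining these contributions yields the recursion $C_{\ell+1}\le C_\ell/\ell + 2/\ell!$, which is bounded by $1+2(e-1)<5$ uniformly in $\ell$.

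\emph{Plan for the second direction.} Introduce the discrepancies $D(m)=\sum_{i\le m}(w_i-d)$ and $E(j)=D(n)-D(j)$; then uniformity of $\bw$ reduces to showing $\max_m|D(m)|=O(\eps^{1/3}n)$. I would first derive $|D(n)|=O(\eps n)$ by noting that the three identities $\sum_\bu u_s\tbinom{\bw}{\bu} = d\tbinom{n}{3}+O(\eps n^3)$ (for $s=1,2,3$) are weighted moments $\sum_i(w_i-d)P_s(i) = O(\eps n^3)$, with $P_1,P_2,P_3$ linearly independent quadratics summing to the constant $\tbinom{n-1}{2}$. For the interval bound, the plan is to decompose each of the eight counts $\tbinom{\bw}{\bu}$ by the middle position $j$, writing $L_1(j)=d(j-1)+D(j-1)$, $R_1(j)=d(n-j)+E(j)$ and their $0$-analogues. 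Suitable linear combinations of the eight resulting equations eliminate the dominant $(j-1)(n-j)$ contribution and force each of the three ``mixed'' sums
\[
\sum_j(j-1)E(j),\qquad \sum_j(n-j)D(j-1),\qquad \sum_j D(j-1)E(j)
\]
to be $O(\eps n^3)$.

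\emph{From $\ell^2$ to $\ell^\infty$, and the main obstacle.} The third identity is the key one. Using $E(j)=D(n)-D(j)$ together with the telescoping identity $2D(j-1)(D(j)-D(j-1)) = D(j)^2-D(j-1)^2-(w_j-d)^2$, I would rewrite
\[
\sum_j D(j-1)E(j) = D(n)\sum_m D(m) - \sum_m D(m)^2 + \tfrac12\sum_j(w_j-d)^2 - \tfrac12 D(n)^2,
\]
and, using $|D(n)|=O(\eps n)$ and $\sum_j(w_j-d)^2=O(n)$, deduce that $\sum_m D(m)^2 = O(\eps n^3)+O(\eps n^2\max_m|D(m)|)$. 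The main (and in my view most delicate) step is converting this $\ell^2$ bound into an $\ell^\infty$ bound: since $D$ is $1$-Lipschitz, if $\tau=\max_m|D(m)|$ is attained at some $m^*$ then $|D(m)|\ge\tau/2$ on an interval of length $\tau$ around $m^*$, whence $\sum_m D(m)^2\ge\tau^3/4$. Combining gives $\tau^3 = O(\eps n^3) + O(\eps n^2\tau)$, which forces $\tau=O(\eps^{1/3}n)$. The discrepancy on any interval $I=[a,b]$ is then $|D(b)-D(a-1)|\le 2\tau$, and a careful bookkeeping of constants delivers the claimed $(d,42\eps^{1/3})$-uniformity.
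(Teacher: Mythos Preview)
Your proposal is correct and follows a genuinely different route from the paper on both halves.

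\textbf{First direction.} The paper does not argue by induction on $\ell$. Instead it passes to the associated step function $f_{\bw}\in\cW$, compares it to the constant function $g\equiv d$, and proves the inequality $|t(\bu,f)-t(\bu,g)|\le \ell^2 d_\Box(f,g)$ by a single telescoping sum over the coordinates of $\bu$ (Lemma~\ref{lem:tcount}). Your inductive argument with Abel summation is equally valid and entirely self-contained; the paper's version has the advantage of plugging directly into the limit theory of Section~\ref{sec:limitTheory}, where the same telescoping bound is reused for arbitrary $f,g\in\cW$.

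\textbf{Second direction.} The paper introduces the vectors $\bg=(g_1,\dots,g_n)$ with $g_\ell=\sum_{i\le\ell}w_i$ and $\bh=(1,\dots,n)$, reads off from the $(11*)$-count an upper bound on $\|\bg\|^2$ and from the $(*1*)+(1{*}{*})$-counts a lower bound on $\langle\bg,\bh\rangle^2$, and then applies an \emph{inverse Cauchy--Schwarz} lemma (Lemma~\ref{lem:inverseCS}): near-equality in Cauchy--Schwarz forces all but $O(\eps^{1/3}n)$ coordinates of $\bg$ to lie close to the line $\RR\bh$, after which the $1$-Lipschitz property of $\bg$ extends this to every index. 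Your route is to isolate the single linear combination $\sum_{\bu}(u_1-d)(u_3-d)\binom{\bw}{\bu}=\sum_j D(j-1)E(j)$, telescope it to recover $\sum_m D(m)^2=O(\eps n^3)+O(\eps n^2\tau)$, and then use the same Lipschitz trick to pass from $\ell^2$ to $\ell^\infty$. Both arguments therefore share the final Lipschitz upgrade, but differ in how the $\ell^2$ control is obtained: the paper via a reusable geometric lemma on near-equality in Cauchy--Schwarz, you via a direct algebraic identity. A pleasant byproduct of your approach is the sharper preliminary bound $|D(n)|=O(\eps n)$ (the paper only extracts $O(\eps^{1/3}n)$ from $\binom{\|\bw\|_1}{3}$), though this does not improve the final exponent. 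Note also that of the three mixed sums you list, only $\sum_j D(j-1)E(j)$ is actually needed; the other two can be dropped from the write-up.
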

 Note that in the second part of the theorem the density of 1-entries is implicitly given. This is because  
  $\binom \bw{(111)}=\binom{\|\bw\|_1}3$, and therefore the condition $\binom \bw{(111)}\approx d^3\binom n3$ implies that  $\|\bw\|_1\approx dn$.
We also note that  length three subsequences in the theorem  cannot be replaced by length two subsequences  and in this sense the result is best possible.
Indeed, the word $(0\dots01\dots10\dots 0)$ consisting of $(1-d)\frac n2$ zeroes followed by $dn$ ones followed by $(1-d)\frac n2$ zeroes 
contains the ``right'' number of every length two subsequences without being uniform. 

We also study a property called Equidistribution and show that it is equivalent to uniformity.
Together with Theorem~\ref{thm:quasirandom} (and its direct consequences)
and a result from Cooper~\cite[Theorem 2.2]{Cooper}
this yields a list of  equivalent properties
stated in Theorem~\ref{thm:quasirandom2}.
To state the result  let $\bw[j]$  denote the $j$-th letter of the word~$\bw$.
Furthermore, by the Cayley digraph
{$\Gamma(\bw)$}
of a word $\bw=(w_1,\dots,w_n)$
we mean the digraph on the vertex set $\ZZ_{2n}$\footnote{Choosing the vertex set to be $\ZZ_{2n}$ instead of $\ZZ_n$ avoids the graph  having loops.} in which $v$ is connected to $(v+i)\pmod{2n}$ for any~$i$ with $w_i=1$. Given a word {$\bu\in\{0,1\}^{\ell}$},
a  sequence of vertices $(v_1,\dots,v_{\ell+1})$ is an induced {$\bu$-walk} in
{$\Gamma(\bw)$}  if the numbers $i_1,\dots,i_\ell\in[n]$
defined by $v_{k+1}=v_k+i_k \pmod n$ satisfy $i_1<\dots<i_\ell$ and
for each $k\in [\ell]$ the pair $(v_k,v_{k+1})$ is an edge in~$\Gamma(\bw)$ if and only if $u_k=1$.
{Note that the number of induced $\bu$-walks in $\Gamma(\bw)$
  is precisely $2n\binom{\bw}{\bu}$.}


\begin{theorem}\label{thm:quasirandom2}
For a sequence $(\bw_n)_{n\to\infty}$  of words  $\bw_n\in\{0,1\}^n$ such that $\|\bw_n\|_1=dn+o(n)$ for some $d\in[0,1]$, the following are equivalent:
\begin{itemize}
\item (Uniformity) $(\bw_n)_{n\to\infty}$  is $(d,o(1))$-uniform.

  \smallskip
\item (Counting) For all  $\ell\in\NN$ and all  $\bu\in\{0,1\}^\ell$ we have
  \[\tbinom {\bw_n}\bu=d^{\|\bu\|_1}(1-d)^{\ell-\|\bu\|_1}\tbinom n\ell+ o(n^\ell).\]
\item (Minimizer) For  all $\bu\in\{0,1\}^3$ we have
  \[\tbinom {\bw_n}\bu=d^{\|\bu\|_1}(1-d)^{3-\|\bu\|_1}\tbinom n3+ o(n^3).\]
\item\label{it:expalpha} (Exponential sums)
  For any fixed  $k\in\NN$, $k\neq 0$, we have
  \[\textstyle\frac 1n\sum_{j\in[n]}\bw_n[j]\cdot\exp\left(\frac{2\pi  i}n k j\right)=o(1).\]
\item\label{it:intf} (Equidistribution) For every  Lipschitz function $f:\mathbb R/\mathbb Z\to\mathbb C$ 
  \[\textstyle\frac1n\sum_{j\in[n]}\bw_n[j]\cdot f(\tfrac jn)=d \int_{\mathbb R/\mathbb Z}f+o(1).\]
\item\label{it:increasingpaths}
  (Cayley graph) For  all $\bu\in\{0,1\}^3$ the number of induced $\bu$-walks in $\Gamma(\bw_n)$ is
  \[ d^{\|\bu\|_1}(1-d)^{3-\|\bu\|_1}2n\tbinom n3+ o(n^4). \]
\end{itemize}
\end{theorem}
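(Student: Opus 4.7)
The strategy is to close a cycle of implications, exploiting what is already available. The triangle (Uniformity) $\Leftrightarrow$ (Counting) $\Leftrightarrow$ (Minimizer) comes for free from Theorem~\ref{thm:quasirandom}: its first part (with $\eps\to 0$ for each fixed length $\ell$) yields (Uniformity) $\Rightarrow$ (Counting); specialising to $\ell=3$ gives (Counting) $\Rightarrow$ (Minimizer); and its second part gives (Minimizer) $\Rightarrow$ (Uniformity). The equivalence (Minimizer) $\Leftrightarrow$ (Cayley graph) is then just a rescaling: the identity recorded immediately before the theorem asserts that the number of induced $\bu$-walks in $\Gamma(\bw_n)$ equals $2n\binom{\bw_n}{\bu}$, so dividing the Cayley-graph count by $2n$ reproduces the Minimizer count exactly. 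The link (Uniformity) $\Leftrightarrow$ (Exponential sums) is Cooper's \cite[Theorem 2.2]{Cooper} applied to the set of $1$-entries of $\bw_n$ viewed as a subset of $\ZZ_n$.

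It remains to tie (Equidistribution) into the loop. One direction, (Equidistribution) $\Rightarrow$ (Exponential sums), is immediate by feeding in the test function $f(x)=\exp(2\pi i k x)$, which is Lipschitz on $\mathbb R/\mathbb Z$ and integrates to $0$ whenever $k\neq 0$.

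For the converse (Exponential sums) $\Rightarrow$ (Equidistribution), the plan is standard Fourier approximation. Fix a Lipschitz $f\colon\mathbb R/\mathbb Z\to\CC$ and $\eta>0$, and choose a trigonometric polynomial $P_N(x)=\sum_{|k|\le N}c_k\exp(2\pi i k x)$ with $\|f-P_N\|_\infty\le\eta$ (possible since $f$ is continuous, e.g.\ via Fej\'er's theorem, with $N=N(\eta,f)$). Splitting
\[
\tfrac1n\sum_{j\in[n]}\bw_n[j]f(\tfrac jn)
= \sum_{|k|\le N} c_k\cdot\tfrac1n\sum_{j\in[n]}\bw_n[j]\exp\!\bigl(\tfrac{2\pi i k j}n\bigr) \,\pm\, \eta\cdot\tfrac{\|\bw_n\|_1}n,
\]
the $k=0$ summand equals $c_0 d+o(1)$, and since $c_0=\int_{\mathbb R/\mathbb Z}P_N$ differs from $\int_{\mathbb R/\mathbb Z}f$ by at most $\eta$, this contributes $d\int_{\mathbb R/\mathbb Z}f+O(\eta)+o(1)$. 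Each of the remaining $2N$ summands is $o(1)$ by (Exponential sums) for fixed $N$, while the error term is $O(\eta)$. Letting $n\to\infty$ first and then $\eta\to 0$ yields (Equidistribution).

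The only delicate point to watch is the order of limits in the last step: the truncation level $N$ must be fixed before $n\to\infty$, so that only finitely many instances of (Exponential sums) are invoked and the sum of the $2N$ error terms remains $o(1)$. All remaining verifications are routine bookkeeping.
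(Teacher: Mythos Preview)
Your proof is correct and follows essentially the same route as the paper: the triangle Uniformity/Counting/Minimizer via Theorem~\ref{thm:quasirandom}, the Cayley graph equivalence via the identity (number of induced $\bu$-walks) $=2n\binom{\bw_n}{\bu}$, Uniformity $\Leftrightarrow$ Exponential sums via Cooper, and the two-way link to Equidistribution via Fourier approximation. The only notable difference is in the step Exponential sums $\Rightarrow$ Equidistribution: you invoke the qualitative Fej\'er theorem to uniformly approximate a single fixed $f$ by a trigonometric polynomial and then apply Exponential sums mode by mode, whereas the paper uses the quantitative bounds of Lemmas~\ref{lem:lip1} and~\ref{lem:lip2} on the Fej\'er approximation and Fourier-coefficient decay to obtain an error of size $\eps\|f\|_{\Lip}$ uniformly over all Lipschitz $f$---a slightly stronger conclusion than the theorem actually demands, so your more elementary argument is entirely adequate here.
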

We will say that a word sequence is
  \emph{quasi-random} if it satisfies one of (hence all) the properties of
  Theorem~\ref{thm:quasirandom2}.

\subsection{Convergent word sequences and word limits}
Over the last two decades it has been recognized that
  quasi-randomness and limits of discrete structures are intimately
  related subjects.  Being interesting in their own right, limit
  theories have also unveiled many connections between various
  branches of mathematics and theoretical computer science. Thus, as a
  natural continuation of the investigation on quasi-randomness,
  we study convergent word sequences
  and their limits, a topic which, to the best of our
  knowledge, has only been briefly mentioned by
  Szegedy~\cite{SzegedyICM18}.

The notion of convergence
    we consider is specified in terms of convergence of subsequence densities.
    Given $\bw\in\{0,1\}^n$ and~$\bu\in\{0,1\}^\ell$, let $t(\bu,\bw)$
    be the density of occurrences of~$\bu$ in $\bw$, i.e.,
    \[
    t(\bu,\bw)=\tbinom {\bw}\bu\tbinom{n}{\ell}^{-1}.
    \] 
Alternatively, if we define $\sub(\ell,\bw):=\sub(I,\bw)$ for~$I$ uniformly chosen among all subsets of $[n]$ of size $\ell$,  then $t(\bu,\bw)=\PP(\sub(\ell,\bw))=\bu)$.

A sequence of words $(\bw_n)_{n\to\infty}$ is called \emph{convergent} if for every finite word~$\bu$ the sequence $\big(t(\bu,\bw_n)\big)_{n\to\infty}$ converges. 
In what follows, we will only consider sequences of words such that the length of the words tend to infinity. This, however, is not much of a restriction since  
convergent word sequences with bounded lengths  must be constant eventually
and limits considerations for these sequences are simple.\footnote{Word sequences with bounded lengths 
contain a subsequence of infinite length which is constant and due to convergence all members of the original sequence must agree with this constant eventually. }

We show that convergent word sequences have  natural   limit objects, which turn out to be  Lebesgue measurable functions of the form $f:[0,1]\to[0,1]$. 
Formally, write $f^1=f$ and $f^0=1-f$ for a function $f:[0,1]\to[0,1]$ and
 for  a word $\bu\in\{0,1\}^\ell$ define
\begin{align}\label{eq:subseqdensity}t(\bu,f)=\ell!\int_{0\leq x_1<\dots<x_\ell\leq1}\prod_{i\in[\ell]}f^{u_i}(x_i)\diff x_1\dots \diff x_\ell.\end{align}
We say that \emph{$(\bw_n)_{n\to\infty}$ converges to $f$} and that $f$ is the \emph{limit} of $(\bw_n)_{n\to\infty}$, if for every word~$\bu$ we have\[\lim_{n\to\infty} t(\bu,\bw_n)=t(\bu,f).\] 
In particular, $(\bw_n)_{n\to\infty}$ is convergent in this case.
Furthermore, let $\cW$ be the set of all Lebesgue measurable functions of the form $f:[0,1]\to[0,1]$ in which, moreover, functions are identified 
when they are equal 
almost everywhere.
We show that each  convergent word sequence converges to a unique $f\in \cW$ and that, conversely, for each $f\in \cW$ there is a word sequence which converges to $f$.
\begin{theorem}[Limits of convergent word sequences]\label{thm:limits}\mbox{}
\begin{itemize}
\item For each convergent word sequence $(\bw_n)_{n\to\infty}$ there is an  $f\in\cW$ such that $(\bw_n)_{n\to\infty}$ converges to $f$.
Moreover, if  $(\bw_n)_{n\to\infty}$ converges to $g$ then $f$ and $g$ are equal almost everywhere.
\item Conversely, for every  $f\in\cW$ there is a word sequence $(\bw_n)_{n\to\infty}$ which converges to $f$.
\end{itemize}
\end{theorem}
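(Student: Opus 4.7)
The plan is to transfer the whole problem into $\cW$ via the identification $\bw\mapsto f_{\bw}$, where $f_{\bw}(x):=w_{\lceil nx\rceil}$ for $\bw=(w_1,\ldots,w_n)\in\{0,1\}^n$. A short sampling comparison (uniform $\ell$-subsets of $[n]$ versus $\ell$ i.i.d.\ uniform points in $[0,1]$, differing only on the $O(\ell^2/n)$ event that two points land in a common length-$1/n$ interval) yields the bridge $t(\bu,\bw)=t(\bu,f_{\bw})+O(\ell^2/n)$, so both halves of the theorem reduce to assertions about functions in $\cW$.

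For the existence half, let $(\bw_n)$ be convergent and set $f_n:=f_{\bw_n}$. Since $\|f_n\|_\infty\le 1$, Banach--Alaoglu delivers a subsequence $(f_{n_k})$ converging weak-$*$ in $L^\infty([0,1])$ to some $f\in\cW$ (the pointwise bounds $0\le f_n\le 1$ a.e.\ pass to the weak-$*$ limit). The core claim is that this weak-$*$ convergence already forces $t(\bu,f_{n_k})\to t(\bu,f)$ for every $\bu$ of length $\ell$. I would write $t(\bu,g)=\ell!\,G_\ell(g;1)$ using the iterated integrals $G_0(g;\cdot)\equiv 1$ and $G_k(g;x):=\int_0^x G_{k-1}(g;y)\,g^{u_k}(y)\diff y$, and prove by induction on $k$ that $G_k(f_{n_k};\cdot)\to G_k(f;\cdot)$ uniformly on $[0,1]$. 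The inductive step splits the difference into $\int_0^x\bigl(G_{k-1}(f_{n_k};y)-G_{k-1}(f;y)\bigr)f_{n_k}^{u_k}(y)\diff y$, handled directly by the inductive hypothesis, and $\pm\int_0^1 G_{k-1}(f;y)\mathbbm 1_{[0,x]}(y)(f_{n_k}-f)(y)\diff y$, handled by the standard fact that weak-$*$ convergence of a bounded sequence is uniform on compact subsets of $L^1$, combined with the compactness of the test family $\{G_{k-1}(f;\cdot)\mathbbm 1_{[0,x]}:x\in[0,1]\}$ in $L^1$ (it is the continuous image of $[0,1]$). Together with $t(\bu,\bw_n)=t(\bu,f_n)+o(1)$ and the convergence of the full sequence $\bigl(t(\bu,\bw_n)\bigr)_n$, this shows $\bw_n\to f$.

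For the uniqueness claim, I would extract the moments of $\phi(x):=\int_0^x f$. Specializing the $G_k$ recursion to $\bu=(1^{\ell-1}0)$, an easy induction gives $G_{\ell-1}(f;\cdot)=\phi^{\ell-1}/(\ell-1)!$, and hence
\[
t\bigl((1^{\ell-1}0),f\bigr)=\ell\int_0^1\phi(y)^{\ell-1}\diff y-\phi(1)^\ell,
\]
so every moment $\int_0^1\phi^{\ell-1}$ is readable off from the subsequence densities. The Hausdorff moment problem then pins down the distribution of the bounded random variable $\phi(X)$ for $X$ uniform on $[0,1]$, and since $\phi$ is continuous and nondecreasing on $[0,1]$ it is itself determined pointwise by that distribution (via its generalized inverse). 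Hence $f=\phi'$ is determined almost everywhere, which gives uniqueness of the limit in $\cW$.

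For the converse, given $f\in\cW$ let $p_{n,k}:=n\int_{(k-1)/n}^{k/n}f(x)\diff x\in[0,1]$ and let $\bw_n$ have independent $\mathrm{Bernoulli}(p_{n,k})$ entries. Then $\EE\,t(\bu,\bw_n)$ agrees up to $O(\ell^2/n)$ with $t(\bu,\tilde f_n)$, where $\tilde f_n$ is the step function with value $p_{n,k}$ on $((k-1)/n,k/n]$; by Lebesgue differentiation $\tilde f_n\to f$ in $L^1$, hence weak-$*$, so the second paragraph yields $t(\bu,\tilde f_n)\to t(\bu,f)$. Azuma--Hoeffding applied to the martingale that reveals the $w_k$'s one at a time gives exponential concentration of $t(\bu,\bw_n)$ around its mean, and Borel--Cantelli plus a diagonal argument over the countable collection of all finite words $\bu$ extracts a deterministic realization with $\bw_n\to f$. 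I expect the main technical obstacle to be the compactness-in-$L^1$ input of the inductive step in paragraph two: without it weak-$*$ convergence would only deliver pointwise-in-$x$ convergence of $G_{k-1}(f_{n_k};x)$, insufficient to close the induction; it is precisely this one-dimensional phenomenon that lets the argument bypass the much heavier cut-norm machinery required in the graph-limit setting.
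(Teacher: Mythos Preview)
Your argument is correct and takes a genuinely different route from the paper's. The paper introduces the interval metric $d_\Box$, proves a counting lemma showing $d_\Box$-convergence implies $t$-convergence (Lemma~\ref{lem:tcount}), uses Bernstein polynomials to show $t$-convergence implies $d_\Box$-Cauchy (Proposition~\ref{prop:tconvboxconv}), and establishes compactness of $(\cW,d_\Box)$ via a random-letter construction together with Prokhorov's theorem (Theorem~\ref{cor:compact}). Uniqueness is obtained by extracting $\int_0^1 f(x)x^k\diff x$ from the densities and invoking Stone--Weierstrass (Lemma~\ref{lem:f=g}). You bypass $d_\Box$ entirely: Banach--Alaoglu (plus separability of $L^1$, for sequential compactness) replaces Prokhorov, and your inductive $G_k$ argument---exploiting that weak-$*$ convergence of a bounded sequence is uniform on $L^1$-compact test families, together with the $L^1$-compactness of $\{G_{k-1}(f;\cdot)\mathbf 1_{[0,x]}:x\in[0,1]\}$---proves weak-$*\Rightarrow t$-convergence directly. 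Your uniqueness proof via moments of $\phi=\int_0^\cdot f$, the Hausdorff moment problem, and monotone rearrangement is different from but comparable in length to the paper's. For the converse you use deterministic bin positions with independent Bernoulli bits, whereas the paper samples both positions and bits (its ``$f$-random word''); both finish via concentration plus Borel--Cantelli. The paper's route buys more than Theorem~\ref{thm:limits} alone: the equivalence of $t$- and $d_\Box$-convergence is reused throughout Section~\ref{sec:testing} on testing. Your route is leaner if only Theorem~\ref{thm:limits} is the goal; indeed the paper itself remarks that Banach--Alaoglu would already suffice for the compactness step.
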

Theorem~\ref{thm:limits} can be phrased in topological terms as follows. Given a word $\bu$, one can think of~$t(\bu,\cdot)$ as a function from $\mathcal W$ to $[0,1]$. Then, endow $\mathcal W$ with the initial topology with respect to the family of maps $t(\bu,\cdot)$, with $\bu\in\{0,1\}^\ell$ and $\ell\in\mathbb N$, that is, the smallest topology that makes all these maps continuous.
We show that this topology is actually metrizable and, moreover, compact (thereby proving Theorem~\ref{thm:limits}).

\medskip
The overall approach we follow is in line with what has been done for graphons~\cite{LS1} and permutons~\cite{HOPPEN}.  Nevertheless, there
are important technical differences, specially concerning the (in our
case, more direct) proofs of the equivalence between distinct notions
of convergence which avoid compactness arguments.
Instead, we rely on Bernstein polynomials and their properties as used in the (constructive) proof the Stone--{Weierstrass} approximation theorem.
 

\medskip
In contrast with other
technically more involved limit theories,
say the ones concerning graph sequences~\cite{LS1} and permutation
sequences~\cite{HOPPEN}, 
the simplicity of the underlying
combinatorial objects we consider (words) yields concise arguments,
elegant proofs, simple limit objects, and requires the introduction of
far fewer concepts.  Yet despite the technically comparatively 
simpler theory, many interesting aspects common to other structures
and some specific to words appear in our investigation.  As an
illustration, we work out the implications for testing of the class of
so-called \emph{hereditary} word properties and address the question
concerning \emph{finite forcibility} for words, i.e., 
which word limits are completely determined by a finite number of
prescribed subsequence densities.

\subsection{Testing hereditary word properties}
The concept
of self-testing/correcting programs was introduced by
Blum et al.~\cite{bk95,blr90} and greatly
expanded by the concept of graph property testing proposed by
Goldreich, Goldwasser and Ron~\cite{GGR98} (for an in depth coverage of the
property testing paradigm, the reader is referred to the book by
Goldreich~\cite{Goldreich17}). An insightful connection between
testable graph properties and regularity was established by
Alon and Shapira~\cite{AS05} and further refined in~\cite{AFNS09,AS08}.
It was then observed that similar and related results
can be obtained via limit theories (for the case of testing graph properties,
the reader is referred to~\cite{LStesting}, and for the case of (weakly) testing
permutation properties, to~\cite{jkmm11}).
Thus, it is not surprising that analogue results can be established for
word properties.
On the other hand, it is noteworthy that such consequences can be 
obtained very concisely and elegantly.

We next state our main result concerning testing word properties.
Formally, for $\bu,\bw\in\{0,1\}^n$ let $d_1(\bw,\bu)=\frac 1n\sum_{i\in [n]}|w_i-u_i|$.
A \emph{word property} is simply a collection of words.
A word property~ $\cP$ is said to be \emph{testable} if there is another
word property $\cP'$ (called \emph{test property for $\cP$})
satisfying the following conditions:
\begin{itemize}
\item[] (Completeness) For every 
  $\bw\in\cP$ of length $n$ and every $\ell\in [n]$,
  $\PP(\sub(\ell,\bw)\in\cP')\geq \tfrac{2}{3}$.

  \smallskip
\item[] (Soundness) For every $\eps>0$ there is an $\ell(\eps)\geq 1$ 
  such that if $\bw\in\{0,1\}^n$ with $d_1(\bw,\cP)=\min_{\bu\in\cP\cap\{0,1\}^n}d_1(\bw,\bu)\geq\eps$,
  then $\PP(\sub(\ell,\bw)\in\cP')\leq \tfrac{1}{3}$ for all
  $\ell(\eps)\leq\ell\leq n$.
\end{itemize}
{If completeness holds with probability $1$ instead of $2/3$
  one says that the property is testable with \emph{perfect completeness}.}
Variants of the notion of testability can be considered.
However, the one stated is sort of the most restrictive.
On the other hand, the notion can be strengthened by replacing the $2/3$
in the completeness part by $1-\eps$ and $1/3$ in the soundness part
by $\eps$. The notion can be weakened letting the test property
$\cP'$ depend on $\eps$.

A word property $\cP$ is called \emph{hereditary} if for each $\bw\in\cP$, 
every subsequence $\bu$ of $\bw$ also belongs to
$\cP$.
  
\begin{theorem}\label{theo:wordTesting}
  Every hereditary word property is testable with perfect completeness.
\end{theorem}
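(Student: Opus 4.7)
The plan is to take $\cP'=\cP$ itself; perfect completeness is then immediate, as heredity guarantees that any subsequence of a word in $\cP$ remains in $\cP$, so the test accepts with probability $1$. For soundness we argue by contradiction using the compactness furnished by Theorem~\ref{thm:limits}. Suppose some $\eps>0$ witnesses failure of soundness; we then obtain sequences $n_k,\ell_k\to\infty$ and words $\bw_{n_k}\in\{0,1\}^{n_k}$ with $d_1(\bw_{n_k},\cP)\ge\eps$ but $\PP(\sub(\ell_k,\bw_{n_k})\in\cP)>1/3$. Passing to a subsequence, $\bw_{n_k}\to f$ for some $f\in\cW$.

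The next step transfers the $1/3$ lower bound to the limit. Since a uniform $\ell$-subsample of $\bw_{n_k}$ has the same distribution as a uniform $\ell$-subsample of $\sub(\ell_k,\bw_{n_k})$, hereditariness of $\cP$ gives $\PP(\sub(\ell,\bw_{n_k})\in\cP)\ge\PP(\sub(\ell_k,\bw_{n_k})\in\cP)>1/3$ for every $\ell\le\ell_k$. Letting $k\to\infty$ yields $\PP(\mathbf U_\ell\in\cP)\ge 1/3$ for every $\ell$, where $\mathbf U_\ell$ is the random word of length $\ell$ sampled from $f$ (so that $\PP(\mathbf U_\ell=\bu)=t(\bu,f)$). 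By Higman's Lemma the subsequence order on $\{0,1\}^*$ is a well quasi-order, so $\cP$ is the avoidance class of a finite antichain $\{\sigma_1,\dots,\sigma_r\}$ of minimal forbidden patterns. A routine second-moment estimate on $\binom{\mathbf U_\ell}{\sigma_i}$ (mean $t(\sigma_i,f)\binom{\ell}{|\sigma_i|}$, variance of order $t(\sigma_i,f)\,\ell^{2|\sigma_i|-1}$) shows that, whenever $t(\sigma_i,f)>0$, the pattern $\sigma_i$ occurs as a subsequence of $\mathbf U_\ell$ with probability tending to $1$, so $\PP(\mathbf U_\ell\in\cP)\to 0$, contradicting the bound above. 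Thus $t(\sigma_i,f)=0$ for every $i$. Inspecting the integrand in~\eqref{eq:subseqdensity}---whenever some $\sigma_i$ contains both letters the integrand is strictly positive on the set $\{f\in(0,1)\}$---this forces $f\in\{0,1\}$ almost everywhere (the degenerate cases where every $\sigma_i$ is of the form $0^m$ or $1^m$ either give $f\equiv 0$ or $f\equiv 1$ a.e., or make $\cP$ finite so that the failure assumption is itself contradictory). Write $f=\mathbbm{1}_A$ for a measurable $A\subseteq[0,1]$.

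It remains to exhibit words $\bv_{n_k}\in\cP$ of length $n_k$ with $d_1(\bw_{n_k},\bv_{n_k})\to 0$. Taking $\bv_n$ to be a realization of the random word of length $n$ sampled from $f=\mathbbm{1}_A$, the bound $\ex\binom{\bv_n}{\sigma_i}=t(\sigma_i,f)\binom{n}{|\sigma_i|}=0$ forces $\bv_n\in\cP$ almost surely, while a Lebesgue density-point argument gives $\|g_{\bv_n}-\mathbbm{1}_A\|_{L^1}\to 0$ almost surely, where $g_\bw(x)=w_{\lceil nx\rceil}$ is the step-function representative of $\bw\in\{0,1\}^n$. For $\bw_{n_k}$ itself, the equidistribution equivalence of Theorem~\ref{thm:quasirandom2}---extended from its uniform-density setting to the present convergent sequence by applying the statement on each of finitely many dyadic subintervals---yields $g_{\bw_{n_k}}\rightharpoonup\mathbbm{1}_A$ weakly in $L^2$. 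Since $g_{\bw_{n_k}}$ and $\mathbbm{1}_A$ both take values in $\{0,1\}$, the identity $\|h\|_{L^2}^2=\|h\|_{L^1}$ valid for $\{0,1\}$-valued $h$ upgrades weak $L^2$-convergence to $\|g_{\bw_{n_k}}-\mathbbm{1}_A\|_{L^1}\to 0$. Combining the two convergences via the triangle inequality gives $d_1(\bw_{n_k},\bv_{n_k})\to 0$, contradicting $d_1(\bw_{n_k},\cP)\ge\eps$.

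The main technical obstacle lies in this last paragraph: bridging the \emph{global} subsequence-density convergence supplied by Theorem~\ref{thm:limits} to a \emph{local} $L^1$-convergence of the associated step functions. This requires extending the equidistribution equivalence of Theorem~\ref{thm:quasirandom2} beyond the uniform-density regime, and then exploiting the $\{0,1\}$-valuedness of the limit $f=\mathbbm{1}_A$ to pass from weak $L^2$ to strong $L^1$ convergence; the second-moment and Higman-based intermediate steps are comparatively routine.
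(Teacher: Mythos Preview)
Your strategy is largely sound and overlaps with the paper's: both hinge on showing the limit $f$ is $\{0,1\}$-valued and then upgrading cut/weak convergence to $L^1$-convergence. Your use of Higman's Lemma to reduce to finitely many forbidden patterns is a pleasant shortcut the paper does not take (the paper works directly with the closure $\overline{\cP}$ via Proposition~\ref{prop:threeNine}), and your $L^2$ trick ($\|g_n\|_2^2=\|g_n\|_1$ for $\{0,1\}$-valued functions) to pass from weak to strong convergence is essentially the content of the paper's Lemma~\ref{lem:threeThirty}.

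However, the step you yourself flag as the ``main technical obstacle'' has a genuine gap. You propose to obtain $g_{\bw_{n_k}}\rightharpoonup\mathbbm{1}_A$ weakly in $L^2$ by applying the equidistribution equivalence of Theorem~\ref{thm:quasirandom2} ``on each of finitely many dyadic subintervals''. This does not work: Theorem~\ref{thm:quasirandom2} applies only to \emph{quasi-random} sequences, and the restriction of $\bw_{n_k}$ to a dyadic subinterval $I$ has no reason to be quasi-random --- its limit is (a rescaled) $\mathbbm{1}_{A\cap I}$, which is not constant in general. More fundamentally, $t$-convergence of the full word tells you about global subsequence densities, and these do not localize to subintervals, so you cannot bootstrap Theorem~\ref{thm:quasirandom2} this way. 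The correct bridge is exactly Proposition~\ref{prop:tconvboxconv} (or, equivalently, the polynomial identity in the proof of Lemma~\ref{lem:f=g}): $t$-convergence gives $\int g_{\bw_{n_k}}(x)x^j\,\diff x\to\int f(x)x^j\,\diff x$ for every $j$, hence weak convergence against polynomials, hence against all of $L^2$ by density. Once you have this, your $L^2$-upgrade argument goes through.

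A second, minor, point: the ``Lebesgue density-point argument'' for $\|g_{\bv_n}-\mathbbm{1}_A\|_{L^1}\to 0$ is correct but not as immediate as you suggest (it requires an approximate-identity estimate for the order-statistic density). The paper sidesteps this entirely: rather than routing through $\mathbbm{1}_A$ via the triangle inequality, it samples $\bv$ using one uniform point per bin $[\tfrac{i-1}{n},\tfrac{i}{n}]$ (Lemma~\ref{prop:threeThirteen}), which gives $\EE\,d_1(\bw,\bv)=\|f_{\bw}-\mathbbm{1}_A\|_1$ exactly and removes the need for the density-point step.
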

Since the notion of testability given above is very restrictive (it consists in sampling uniformly a constant number of characters from the word being tested) it straightforwardly yields efficient (polynomial time) testing procedures.

Hereditary properties can be characterized as collections $\cP_{\cF}$ of words that do not contain as subsequence any word in $\cF$ where
$\cF$ is a family of words ($\cF$ might even be infinite). For instance, given $\cP_1,...,\cP_k$ hereditary word properties, the collection
$\cP_{\text{col}}$ of words that can be $k$-colored (i.e., each of its
letters assigned a color in $[k]$) so that for all $c\in [k]$ the
induced $c$ colored sub-word is in~$\cP_c$ is an example of a hereditary
  word property.

\subsection{Finite forcibility}
Finite forcibility was introduced by Lov\'asz and S\'os~\cite{LovaszSos} while studying a generalization of quasi-random graphs. 
For an in depth investigation of finitely forcible graphons we refer to the work of Lov\'asz and Szegedy~\cite{LovaszSzegedy}. We say that $f\in\cW$ is \emph{finitely forcible}
if there is a finite list of words $\bu_1,\dots \bu_m$ such that any
function $h:[0,1]\to[0,1]$ which satisfies $t(\bu_i,h)=t(\bu_i,f)$ for
all $i\in[m]$ must agree with $f$ almost everywhere.  A direct
consequence of Theorem~\ref{thm:quasirandom} concerning quasi-random
words is that the constant functions are finitely forcible (by words
of length three).  We can generalize this result as follows:
\begin{theorem}\label{thm:forcible}
Piecewise polynomial functions are finitely forcible.
Specifically, 
if there is an interval partition $\{I_1,...,I_k\}$ of $[0,1]$, polynomials $P_1(x),...,P_k(x)$ of degrees $d_1,...,d_k$, respectively, and
  $f\in\cW$ is such that $f(x)=P_{i}(x)$ for all $i\in [k]$ and $x\in I_i$, then
  there is a list of words $\bu_1,\dots,\bu_m$, with $m\leq(k+1)^{2k^2(1+\max_i\deg P_i)}$ such that any function $h:[0,1]\to[0,1]$ which satisfies 
$t(\bu_i,h)=t(\bu_i,f)$ for all $i\in[m]$
 must agree with $f$ almost everywhere. 
\end{theorem}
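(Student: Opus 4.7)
The plan is to prove Theorem~\ref{thm:forcible} in three main steps: first, extract polynomial-moment information about $h$ from word densities via an algebraic reduction; second, use ``localization'' words to isolate the contributions of each interval $I_j$; third, combine with the polynomial structure of $f|_{I_j}$ to conclude $h=f$ almost everywhere on each piece.

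For the first step, I would expand the defining integral of $t(\bu, h)$ on the ordered simplex and, by induction on $k$, express the moment $\int_0^1 h(y)\, y^k \diff y$ as a polynomial function of word densities $t(\bu, h)$ for $\bu$ of length at most $k+1$. The key identity is
\[
t(0^k 1, h) = (k+1) \int_0^1 h(z)\,(z - F_h(z))^k \diff z,
\]
where $F_h(y):= \int_0^y h(x)\,\diff x$; expanding $(z-F_h(z))^k$ via the binomial theorem and using integration by parts to relate terms $\int h\, z^a F_h^b \diff z$ to lower-order ones---with auxiliary densities $t(1^a 0 1^b, h)$ closing the recursion---completes the induction. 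By linearity, for every polynomial $Q$ of degree at most $D$, the integral $\int_0^1 h(y) Q(y) \diff y$ is a polynomial combination of finitely many word densities of length at most $D+1$.

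For the second step, I would construct families of localization words whose densities in $h$ are polynomial functions of the breakpoint values $F_h(a_1),\dots,F_h(a_{k-1})$ together with the interval-restricted moments $\int_{I_j} h(y)\, y^\ell \diff y$ for $j\in[k]$ and $\ell\leq\deg P_j$. Matching the densities of $h$ to those of $f$ on this family produces a system of polynomial equations pinning down both the breakpoints and all relevant restricted moments of $h-f$, forcing $\int_{I_j}(h-f)(y)\, y^\ell \diff y=0$ for every $\ell\leq\deg P_j$. Since $f|_{I_j}=P_j$ is itself polynomial of degree $\deg P_j$, orthogonality against a basis of matching dimension in $L^2(I_j)$ yields $h=f$ almost everywhere on $I_j$, and hence on $[0,1]$. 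Carefully bookkeeping the number of words used across breakpoints, intervals, and moment orders gives the announced bound $m\leq(k+1)^{2k^2(1+\max_i\deg P_i)}$.

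The main obstacle will be constructing the localization words of the second step, because word densities inherently integrate over the full $[0,1]$-simplex. The strategy I would pursue is to exploit the probabilistic interpretation $t(\bu, h)=\PP(\sub(\ell,h)=\bu)$, designing patterns $\bu$ with many auxiliary $0$'s and $1$'s before and after a ``core'' subword so that, conditional on the labels of the auxiliary positions matching, the positions of the core samples concentrate in a chosen subinterval $I_j$. Making this precise requires showing that the resulting polynomial system---in the unknowns $F_h(a_j)$ and the restricted moments of $h$---has an invertible Jacobian at the target values coming from $f$, which in turn relies on the linear independence of a specific family of polynomial expressions that arise from the localization construction.
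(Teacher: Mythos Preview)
Your third step contains a fatal gap. You claim that forcing $\int_{I_j}(h-f)(y)\,y^\ell\,\diff y=0$ for all $\ell\le\deg P_j$ yields $h=f$ a.e.\ on $I_j$ by ``orthogonality against a basis of matching dimension in $L^2(I_j)$''. But $L^2(I_j)$ is infinite-dimensional and $h$ is merely measurable, not polynomial; orthogonality of $h-f$ to the $(d_j{+}1)$-dimensional space of polynomials of degree at most $d_j$ in no way forces $h-f=0$. For instance, with $k=1$ and $f\equiv\tfrac12$ your argument would say that any $h\in\cW$ with $\int_0^1 h=\tfrac12$ equals $\tfrac12$ a.e., which is plainly false. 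Your second step is also not workable as stated: word densities $t(\bu,h)$ are exact integrals over the full simplex and carry no intrinsic dependence on the breakpoints $a_j$ of the partition; the concentration heuristic you sketch can at best yield approximate information about restricted moments, and an implicit-function (Jacobian) argument only gives local, not global, uniqueness.

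The paper's argument avoids both difficulties by working with the antiderivative $H(x)=\int_0^x h$ rather than with $h$. Since $F(x)=\int_0^x f$ agrees on each $I_j$ with a polynomial $Q_j$, one forms the nonnegative polynomial $P(x,y)=\prod_j(y-Q_j(x))^2$ and notes that $\int_0^1 P(x,F(x))\,\diff x=0$. A separate lemma expresses each mixed moment $\int_0^1 x^iF(x)^j\,\diff x$ as a \emph{linear} combination of densities $t(\bu,f)$ with $|\bu|=i+j+1$, so this vanishing is forced by finitely many density constraints. If $h$ satisfies the same constraints then $\int_0^1 P(x,H(x))\,\diff x=0$, and nonnegativity of $P$ gives $P(x,H(x))=0$ pointwise, i.e.\ at every $x$ the continuous function $H$ equals some $Q_\ell(x)$. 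Continuity and the finitely many intersection points of the $Q_\ell$'s then leave only finitely many candidates for $H$, each of which (except $F$) is ruled out by one further density. The crucial idea you are missing is the passage to the antiderivative together with the squared-product trick, which converts the problem into a pointwise algebraic constraint rather than a finite list of moment conditions.
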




\subsection{Extensions}\label{sec:generalizations}
We have considered quasi-randomness for words and limits of
convergent word sequences.
Our results are formulated for words over the alphabet $\{0,1\}$.
However, our results 
(except for the ones concerning testing word properties)
can be easily extended to any alphabet of finite size.
Also, note that a word of length $n$
  can be viewed as a $1$-dimensional $\{0,1\}$ array
  $A:[n]\to \{0,1\}$, which labels each element of $[n]$ with $0$ or $1$.
  Thus, a natural generalization of the $1$-dimensional binary word object
  is a $d$-dimensional $\{0,1\}$-array, $d$-array for short,
  $A:[n]^d\to\{0,1\}$.
  Our approach can also be generalized to handle $d$-arrays.
  Indeed, the natural extension to $d$-arrays of the notion of convergence
  of $1$-arrays yields a notion of convergent $d$-array sequence
  $(A_{n})_{n\to\infty}$, where $A_n:[n]^d\to\{0,1\}$ for all $n\in\NN$,
  whose limit is a Lebesgue measurable functions mapping
  $[0,1]^{d}$ to $[0,1]$ and where each such mapping is the limit of a
  convergent $d$-array sequence.

\subsection{Permutons from words limits}
Given $n\in\mathbb N$, we denote by $\mathfrak S_n$ the set of
permutations of order $n$ and $\mathfrak S=\bigcup_{n\ge 1}\mathfrak S_n$ the
set of all finite permutations.
Also, for $\sigma\in\mathfrak S_n$ and
$\tau\in\mathfrak S_k$ we let $\Lambda(\tau,\sigma)$ be 
the number of copies of $\tau$ in $\sigma$, that is, the number of $k$-tuples $1\le x_1<\dots<x_k\le n$ such that for
every $i,j\in[k]$
\[
\sigma(x_i)\le \sigma(x_j)\hspace{.5cm}\text{ iff }\hspace{.5cm}\tau(i)\le \tau(j).
\]
The density of copies of $\tau$ in $\sigma$, denoted by $t(\tau,\sigma)$,
is the probability that $\sigma$ restricted to a randomly chosen
$k$-tuple of $[n]$ yields a copy of $\tau$.
A sequence $(\sigma_n)_{n\to\infty}$ of permutations, with $\sigma_n\in\mathfrak S_n$ for each $n\in\mathbb N$, is said to be convergent if 
$\lim_{n\to\infty}t(\tau,\sigma_n)$
exists for every permutation $\tau\in\mathfrak S$. Hoppen et al.~\cite{HOPPEN} proved that every convergent sequence of permutations converges to a suitable analytic object called \textit{permuton}, which are probability measures on the Borel $\sigma$-algebra on $[0,1]\times[0,1]$ with uniform marginals, the collection
of which they denote by $\mathcal Z$, and also extend the map
$t(\tau,\cdot)$ to the whole of $\mathcal Z$.
Then, they define a metric $d_\Box$ on $\mathcal Z$ 
so that for all $\tau\in\mathfrak S$ the maps $t(\tau,\cdot)$ are
 continuous with respect to $d_\Box$.
They also show that $(\mathcal Z,d_\Box)$ is compact and, as a
consequence, establish that convergence as defined above and
convergence in $d_{\Box}$ are equivalent.
In particular, they prove that for every
convergent sequence of permutations $(\sigma_n)_{n\to\infty}$ there is
a permuton $\mu\in\mathcal Z$ such that $t(\tau,\sigma_n)\to
t(\tau,\mu)$ for all $\tau\in\mathfrak S$.
We give new proofs (see Proposition~\ref{perm:cauchy} and Theorem~\ref{thm:hoppen}) of these two results by using a more direct
approach based on Theorem~\ref{thm:limits}. 

\subsection{Organization} 
We discuss quasi-randomness in Section~\ref{sec:quasirandom},
proving Theorem~\ref{thm:quasirandom2} concerning the equivalent characterizations
of quasi-random words and the second part of
  Theorem~\ref{thm:quasirandom}, that uniformity is implied by the counting property of length three subsequences.
  The  first part of Theorem~\ref{thm:quasirandom}, which claims that uniformity entails the counting property of all subsequences,
 follows from the 
   more general Lemma~\ref{lem:tcount} from
  Section~\ref{sec:limitTheory}.
  
  In Section~\ref{sec:limitTheory} we develop the limit theory of
  convergent word sequences.
  Besides proving Theorem~\ref{thm:limits}, thus establishing the existence
  of word limits, among others, we also prove the uniqueness of such limits and that the initial topology of $\cW$ is metrizable and complete.  
  
Section~\ref{sec:testing} 
  is dedicated to the study of testable word
  properties, in particular to the proof of Theorem~\ref{theo:wordTesting} concerning testability of hereditary word properties.  
Finite forcibility is addressed
in Section~\ref{sec:forcible} where we prove Theorem~\ref{thm:forcible}
concerning forcibility of piecewise polynomial functions. The proof
also yields an alternative  proof of
the second part of Theorem~\ref{thm:quasirandom} which is moreover
formulated in the language of word limits, see Remark~\ref{rem:forcibleqr}.
Section~\ref{sec:permFromWords} is devoted to an alternative derivation of
two key results of Hoppen et al.~\cite{HOPPEN} about permutons.
In Section~\ref{sec:extensions}, we discuss generalizations of our
results to words over non-binary alphabets and
extensions to higher dimensional objects, specifically multi-dimensional arrays.
We conclude in Section~\ref{sec:final} with a brief discussion of
  potential future research directions.

\section{Quasi-randomness}\label{sec:quasirandom}
In this section we give the proof of the second part of Theorem~\ref{thm:quasirandom} and Theorem~\ref{thm:quasirandom2}.
{We start by establishing an inverse form of the
  Cauchy--Schwarz inequality which is used to prove the second part of
  Theorem~\ref{thm:quasirandom}, that
  controlling the density of subsequences of length three is enough to
  guarantee uniformity. An alternative  demonstration of  the second part of
  Theorem~\ref{thm:quasirandom} can be extracted from the proof of Theorem~\ref{thm:forcible} (see Remark~\ref{rem:forcibleqr}).
  
  Then, after recalling some basic facts and terminology about Fourier analysis and Lipschitz functions, we proceed to prove the equivalence of the  quasi-random properties listed in Theorem~\ref{thm:quasirandom2}.

\begin{lemma}\label{lem:inverseCS}
  If
  $\bg=(g_1,\dots,g_n),\bh=(h_1,\dots,h_n)\in\RR^n$ and $\eps \in(0,1)$
  are such that
  \[
  \langle\bg,\bh\rangle^2\geq \|\bg\|^2\|\bh\|^2-\eps n^3\|\bh\|^2,
  \]
  then all but at most $\eps^{1/3}n$ indices
  $i\in[n]$ satisfy
  $g_i=\tfrac{\langle\bg, \bh\rangle}{\langle\bh, \bh\rangle}h_i\pm\eps^{1/3}n.$
\end{lemma}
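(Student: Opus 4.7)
The plan is to interpret the hypothesis as saying that $\bg$ is close, in $\ell^2$, to its orthogonal projection onto the line spanned by $\bh$. Set $\lambda=\langle\bg,\bh\rangle/\langle\bh,\bh\rangle$. Then $\bg-\lambda\bh$ is orthogonal to $\bh$, so the Pythagorean identity applied to $\bg=\lambda\bh+(\bg-\lambda\bh)$ yields
\[
\|\bg-\lambda\bh\|^2=\|\bg\|^2-\lambda^2\|\bh\|^2=\|\bg\|^2-\frac{\langle\bg,\bh\rangle^2}{\|\bh\|^2}.
\]
Feeding in the hypothesis $\langle\bg,\bh\rangle^2\geq\|\bg\|^2\|\bh\|^2-\eps n^3\|\bh\|^2$ (and noting that if $\bh=0$ there is nothing to prove, so we may assume $\|\bh\|>0$), the right-hand side is at most $\eps n^3$. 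Hence the first step of the proof is the one-line bound $\|\bg-\lambda\bh\|^2\leq\eps n^3$.

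The second step is a Markov-type conversion from $\ell^2$ closeness to pointwise closeness on most coordinates. Let $B=\{i\in[n]:|g_i-\lambda h_i|>\eps^{1/3}n\}$. Each index in $B$ contributes strictly more than $\eps^{2/3}n^2$ to $\|\bg-\lambda\bh\|^2$, so
\[
\eps n^3\;\geq\;\|\bg-\lambda\bh\|^2\;\geq\;|B|\cdot\eps^{2/3}n^2,
\]
which gives $|B|\leq\eps^{1/3}n$. For $i\notin B$ we have $g_i=\lambda h_i\pm\eps^{1/3}n$, which is precisely the conclusion.

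I do not anticipate a real obstacle: the lemma is essentially the quantitative (stability) form of the equality case of the Cauchy--Schwarz inequality, and the only slightly delicate point is keeping track of the scaling in $\bh$. The factor $\|\bh\|^2$ on the right-hand side of the hypothesis is exactly what makes the estimate scale-invariant in $\bh$ and thereby produces the correct proportionality constant $\lambda=\langle\bg,\bh\rangle/\langle\bh,\bh\rangle$ in the conclusion; any other normalization would fail to be invariant under replacing $\bh$ by $c\bh$.
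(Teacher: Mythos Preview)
Your proof is correct and essentially identical to the paper's: both project $\bg$ onto the line spanned by $\bh$, use Pythagoras together with the hypothesis to bound $\|\bg-\lambda\bh\|^2\leq\eps n^3$, and then observe that more than $\eps^{1/3}n$ indices with deviation exceeding $\eps^{1/3}n$ would contradict this bound.
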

\begin{proof}
Let $\bz$ be the projection of $\bg$ onto the plane orthogonal to $\bh$, i.e., $\bz=\bg-\frac{\langle\bg,\bh\rangle}{\langle\bh,\bh\rangle}\bh$. As~$\bz$ and~$\bh$ are orthogonal, it follows that
\[\|\bg\|^2=\tfrac{\langle\bg,\bh\rangle^2}{\langle\bh,\bh\rangle^2}\|\bh\|^2+\|\bz\|^2=\tfrac{\langle\bg,\bh\rangle^2}{\|\bh\|^2}+\|\bz\|^2.\]
The assumption then yields
\begin{align}\label{eq:csi}\eps n^3\geq \|\bz\|^2=\sum_{i\in[n]}\left(g_i-\tfrac{\langle \bg, \bh\rangle}{\langle \bh,\bh\rangle}h_i\right)^2.\end{align}
Thus, the conclusion of the lemma must hold, otherwise $\|\bz\|^2>\eps^{1/3}n(\eps^{1/3}n)^2=\eps n^3$, contradicting~\eqref{eq:csi}.
\end{proof}

\begin{proof}[Proof (of the second part of Theorem~\ref{thm:quasirandom})]
Given $\eps>0$  let $n>n_0$ be sufficiently large. By a word containing $*$ we mean the family of words obtained by replacing~$*$ by $0$ or $1$, e.g.,
$\bu=(*u_2 u_3)$  denotes the family  $\{(0u_2u_3),(1u_2u_3)\}$. For a word $\bu$ containing $*$,
let $\binom \bw \bu=\sum_{\bu'} \binom \bw {\bu'}$ where the sum ranges over the family mentioned  above. 
 Given a word $\bw=(w_1\dots w_n)\in \{0,1\}^n$ which satisfies the assumption of the theorem we have
 \begin{align}\label{eq:starcount} \binom {\bw}{11*}\leq d^{2}\binom n3+ 2\eps n^3\quad\text{and}\qquad \binom {\bw}{*1*}+\binom {\bw}{1\!*\!*}\geq 2d\binom n3- 8\eps n^3.\end{align}
 We may also assume that  $d\geq \eps$, otherwise the first condition yields
 $\|\bw\|_1\leq 3\eps^{1/3} n$ due to 
 $\tbinom{\|\bw\|_1}{3}= \tbinom\bw{111}$ and   the result follows trivially.

Note that by assumption $\tbinom{\|\bw\|_1}{3}=d^3\tbinom{n}{3}\pm\varepsilon n^3$, implying that $\|\bw\|^3_1=d^3n^3\pm 7\varepsilon n^3$, whence
   $\|\bw\|_1=dn\pm 3\eps^{1/3}n$. Next, let $\bg=(g_1,\dots,g_n)$ where $g_\ell=\sum_{i\in[\ell]}w_i$  and let  $\bh=(1,2,\dots,n)$.
 It is easily seen that $\bw$ is $42\eps^{1/3}$-uniform if $g_{\ell}=d\ell\pm 21\eps^{1/3}n$ for every $\ell\in [n]$ and, since $g_n=\|\bw\|_1=dn\pm 3\eps^{1/3}n$, that the latter follows from
\begin{align}
\label{eq:defqr2} g_\ell=\frac{\langle \bg, \bh\rangle}{\langle \bh,\bh\rangle}\ell\pm 9\eps^{1/3}n\qquad\text{for every $\ell\in[n]$}.
\end{align}
To show~\eqref{eq:defqr2} note first that
\begin{align*}
   g_\ell^2 =|\{(i,j)\in [\ell]^2\colon w_{i}=w_{j}=1\}|\leq |\{(i,j)\in [\ell-1]^2\colon w_{i}=w_{j}=1,  i\neq j \}|+3(\ell-1)+1.
\end{align*}
Hence, up to an additive error of $3(\ell-1)+1$ the quantity $g_\ell^2$ is
twice the number of subsequences of {$\bw$ of the form $(11*)$ ending at $\bw$'s $\ell$-th letter.}
Summing over all $\ell\in[n]$ we obtain from~\eqref{eq:starcount}
\begin{align}\label{eq:normg}\|\bg\|^2=\sum_{\ell\in[n]}g_\ell^2\leq 2\binom{\bw}{11*}+\tfrac{3}{2}n^2\leq 2d^2\binom{n}3+5\eps n^3.\end{align} 
Consider next, for an $\ell\in[n]$, the family $S_\ell$ of {pairs $(i,j)\in [\ell-1]$, $i\neq j$, such that $(w_iw_j)$ is a subsequence of
  $(w_1,...,w_{\ell-1})$ and either $w_i=1$ or $w_j=1$.}
Then, we have $|S_\ell|\leq g_\ell\cdot\ell$, since there are at most
$g_\ell$ choices for $i$ and each such choice of $i$ gives rise to
$(i-1)+(\ell-i-1)\leq\ell$ choices for $j$.
On the other hand,  $\sum_{\ell\in[n]}|S_\ell|$ counts all subsequences of
$\bw$ of the form $(*1*)$ and $(1\!*\!*)$. 
Hence,~\eqref{eq:starcount} together with  $\bh=(1,2,\dots,n)$ yields
\[
\langle \bg,\bh \rangle^2
   =\Big(\sum_{\ell\in[n]}g_\ell\cdot \ell\Big)^2 \geq \Big(\sum_{\ell\in[n]}|S_\ell|\Big)^2 =\left(\binom\bw{*1*}+\binom\bw{1\!*\!*}\right)^2
   \geq  4d^2\binom{n}3^2-32\eps \binom n3n^3.
\]
As $\|\bh\|^2=\sum_{i\in[n]} i^2=\frac16 {n(n+1)(2n+1)}=2\binom n3+ \frac 32n^2-\frac n2$  from \eqref{eq:normg} we obtain 
\begin{align*}
\langle \bg,\bh\rangle^2- \|\bg\|^2\|\bh\|^2
  & \geq 4d^2\binom{n}3^2-32\eps \binom n3n^3- \left(2d^2\binom{n}3+5\eps n^3\right)\|\bh\|^2\\
  & \geq  2d^2\binom{n}3\left(\|\bh\|^2-\frac 32 n^2\right)- 16\eps  n^3\|\bh\|^2 - \left(2d^2\binom{n}3+5\eps n^3\right)\|\bh\|^2\\
  & \geq -22\eps n^3\|\bh\|^2.
\end{align*}
By Lemma~\ref{lem:inverseCS}
all but at most $(22\eps)^{1/3}n$ indices $i\in[n]$ satisfy 
$g_i=\frac{\langle \bg, \bh\rangle}{\langle \bh,\bh\rangle}i\pm(22 \eps)^{1/3}n.$
In particular, for every $\ell\in[n]$ there is such an index $i$ with $i=\ell\pm (22 \eps)^{1/3}n$. Thus 
\[
g_\ell=g_i\pm (22 \eps)^{1/3}n=\frac{\langle \bg, \bh\rangle}{\langle \bh,\bh\rangle}i\pm 2(22 \eps)^{1/3}n=\frac{\langle \bg, \bh\rangle}{\langle \bh,\bh\rangle}\ell\pm 3(22 \eps)^{1/3}n
\]
which shows~\eqref{eq:defqr2} and the second part of Theorem~\ref{thm:quasirandom} follows. 
\end{proof}

\begin{remark}
  The previous proof shows something stronger than what is claimed.
  Specifically, that instead of requiring the right count of  all subsequences of  length three it is  sufficient to have~\eqref{eq:starcount}, i.e., the correct upper bound for
the count of $(11*)$ and the correct lower bound for the sum of the count of $(*1*)$ and  $(1\!*\!*)$.
\end{remark}

We now turn our attention to Theorem~\ref{thm:quasirandom2} and recall here some facts from Fourier analysis on the circle. 
{Letting $\diff x$ correspond to the Lebesgue measure on the unit circle, for} $k\in\mathbb Z$,  the Fourier transform $\widehat{f}(k)$ of a function $f:\RR/\ZZ\to\CC$ is defined  by
	\[\widehat{f}(k)=\int_{\mathbb R/\mathbb Z}f(x)e^{-2\pi i k x}\diff x{.}\]
Given $N\in\NN$, the \textit{Fej\'er approximation} of order $N$ of $f$ is defined by
	\[\sigma_N f(x)= \sum_{|n|\le N}\Big(1-\frac{|n|}{N+1}\Big)\widehat{f}(n)e^{2\pi inx}.\]
Finally, we define the \textit{Lipschitz-norm} of $f$
 as $\|f\|_{\Lip}=\|f\|_\infty+\sup_{x\not=y}\frac{|f(x)-f(y)|}{d(x,y)}$, where $d(x,y)=\min\{1-|x-y|,|x-y|\}$ is the usual distance in $\mathbb R/\mathbb Z$.
\begin{lemma}[Proposition~1.2.12 from \cite{pinsky}]\label{lem:lip1} There is a constant $C>0$ such that for any Lipschitz function $f:\mathbb R/\mathbb Z\to \mathbb C$  and for every $M\ge 2$ one has
	\[ \|f-\sigma_Mf\|_\infty\le C \|f\|_{\Lip}\frac{\log M}{M}.\]
	\end{lemma}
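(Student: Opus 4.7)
The plan is to realize the Fejér approximation as convolution with the Fejér kernel and then use the Lipschitz hypothesis to convert uniform error into a first-moment integral against that kernel. Concretely, recall that $\sigma_M f = f * F_M$, where
\[
F_M(y)=\sum_{|n|\le M}\Bigl(1-\tfrac{|n|}{M+1}\Bigr)e^{2\pi i n y}=\frac{1}{M+1}\left(\frac{\sin(\pi(M+1)y)}{\sin(\pi y)}\right)^{\!2}
\]
is a nonnegative trigonometric polynomial with $\int_{\RR/\ZZ}F_M(y)\diff y=1$. Since the kernel has total mass one, we can write
\[
f(x)-\sigma_M f(x)=\int_{\RR/\ZZ}\bigl(f(x)-f(x-y)\bigr)F_M(y)\diff y,
\]
and the Lipschitz hypothesis yields $|f(x)-f(x-y)|\le \|f\|_{\Lip}\, d(y,0)$. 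Therefore, taking the supremum over $x$,
\[
\|f-\sigma_M f\|_\infty\le \|f\|_{\Lip}\int_{\RR/\ZZ}d(y,0)\,F_M(y)\diff y.
\]

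The remaining step is to prove the kernel estimate $\int d(y,0)F_M(y)\diff y=O(\log M/M)$, which is where the logarithmic factor originates. For this I would use the two standard pointwise bounds on the Fejér kernel: the trivial one $F_M(y)\le M+1$, together with $F_M(y)\le \frac{1}{(M+1)\sin^2(\pi y)}\le \frac{C_0}{M y^2}$ valid on $|y|\le 1/2$ (using $|\sin(\pi y)|\ge 2|y|$ on that range). Splitting the integral at the scale $1/M$ gives
\[
\int_{|y|\le 1/M}|y|\,F_M(y)\diff y \le (M+1)\!\int_{|y|\le 1/M}|y|\diff y=O(1/M),
\]
\[
\int_{1/M\le |y|\le 1/2}|y|\,F_M(y)\diff y \le \frac{C_0}{M}\!\int_{1/M\le |y|\le 1/2}\frac{\diff y}{|y|}=O(\log M/M),
\]
and summing these two contributions yields the claimed bound with an absolute constant $C$.

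No part of this is genuinely hard; the only thing to be a bit careful about is obtaining the pointwise estimate $F_M(y)\le C_0/(My^2)$ with the correct constant on the torus (i.e., using $d(y,0)$ rather than $|y|$ and the inequality $|\sin(\pi y)|\ge 2\,d(y,0)$ for $d(y,0)\le 1/2$), which is what forces the logarithmic factor through the integral of $1/|y|$. Everything else is manipulation of convolutions and elementary integral estimates.
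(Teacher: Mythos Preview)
Your argument is correct and is in fact the standard proof of this Fej\'er--Lipschitz estimate. The paper does not give its own proof of this lemma at all; it simply cites it as Proposition~1.2.12 in Pinsky's book, so there is nothing to compare your approach against. One minor remark: the quantity $\|f\|_{\Lip}$ as defined in the paper is $\|f\|_\infty$ plus the Lipschitz seminorm, so when you invoke $|f(x)-f(x-y)|\le \|f\|_{\Lip}\,d(y,0)$ you are in fact only using the seminorm part, which is of course bounded by $\|f\|_{\Lip}$; this is harmless but worth being aware of.
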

\begin{lemma}[Theorem~1.5.3 from \cite{pinsky}]\label{lem:lip2}There is a constant $c>0$ such that for any Lipschitz function  $f:\mathbb R/\mathbb Z\to \mathbb C$ and for every $m\not=0$ one has
	\[|\widehat{f}(m)|\le \frac{c\|f\|_{\Lip}}{|m|}.\]
	\end{lemma}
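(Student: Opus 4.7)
The plan is to use the classical translation trick for Fourier coefficients. Since $e^{-2\pi i m \cdot t} = -1$ exactly when $t = \tfrac{1}{2m}$, shifting the integration variable by $\tfrac{1}{2m}$ in the defining integral flips the sign of the complex exponential. Concretely, I would start from
\[
\widehat{f}(m) = \int_{\mathbb R/\mathbb Z} f(x)\, e^{-2\pi i m x}\,\diff x,
\]
substitute $x \mapsto x + \tfrac{1}{2m}$ (using translation invariance of the Lebesgue measure on $\mathbb R/\mathbb Z$), and use $e^{-2\pi i m/(2m)} = e^{-\pi i} = -1$ to conclude
\[
\widehat{f}(m) = -\int_{\mathbb R/\mathbb Z} f\!\left(x + \tfrac{1}{2m}\right) e^{-2\pi i m x}\,\diff x.
\]

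Averaging the two expressions, I would obtain
\[
2\,\widehat{f}(m) = \int_{\mathbb R/\mathbb Z}\left(f(x) - f\!\left(x + \tfrac{1}{2m}\right)\right) e^{-2\pi i m x}\,\diff x,
\]
and then just bound the right-hand side in absolute value. Since the distance in $\mathbb R/\mathbb Z$ between $x$ and $x+\tfrac{1}{2m}$ is at most $\tfrac{1}{2|m|}$, the Lipschitz hypothesis gives $|f(x) - f(x + \tfrac{1}{2m})| \le \|f\|_{\Lip}\cdot \tfrac{1}{2|m|}$ pointwise. Combined with $|e^{-2\pi i m x}|=1$ and the fact that the unit circle has total measure $1$, this yields
\[
|\widehat{f}(m)| \le \frac{1}{2}\cdot \|f\|_{\Lip}\cdot \frac{1}{2|m|} = \frac{\|f\|_{\Lip}}{4|m|},
\]
so one can take $c = \tfrac{1}{4}$.

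There is no real obstacle here beyond making the substitution rigorous on the circle; the only minor care point is that the Lipschitz seminorm appearing in the bound $|f(x) - f(y)| \le L\, d(x,y)$ is part of $\|f\|_{\Lip}$ by definition, so the inequality $L \le \|f\|_{\Lip}$ is automatic. No appeal to smoothness, Riemann--Lebesgue, or integration by parts is needed, which is exactly what one wants since the hypothesis is merely Lipschitz, not $C^1$.
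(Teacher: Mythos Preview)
Your argument is correct and is in fact the standard proof of this estimate. The paper does not supply its own proof of this lemma; it is quoted as Theorem~1.5.3 of~\cite{pinsky} and used as a black box, so there is nothing to compare against beyond noting that your derivation even pins down the explicit constant $c=\tfrac14$.
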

We are now in the position to prove Theorem~\ref{thm:quasirandom2}.
\begin{proof}[Proof (of Theorem~\ref{thm:quasirandom2})]
  The equivalence between the Uniformity, Counting, {and Minimizer} properties follow from
  Theorem~\ref{thm:quasirandom}.
  The Cayley graph and Counting properties are the same property since
  there is a one-to-$n$
  correspondence between subsequences in $\bw_n$
  equal to $\bu$ and induced $\bu$-walks
  in $\Gamma(\bw_n)$.
  {To see this,
    simply note that $(v_1,\dots,v_{\ell+1})$ is an induced
    $\bu$-walk in $\Gamma(\bw_n)$ if and only if $(v_1+a,...,v_{\ell+1}+a)$ is
    an induced $\bu$-walk in $\Gamma(\bw_n)$, for all $a\in [n]$ (where arithmetic over vertices is modulo $n$).}
  The equivalence between the properties Uniformity and Exponential sums   was shown {by Cooper} in~\cite[Theorem~2.2]{Cooper}.\footnote{In~\cite{Cooper}, Cooper works in the context of subsets of $\mathbb Z_n$, calling a subset $S\subseteq \mathbb Z_n$ $\varepsilon$-balanced if $D(S)=\sup_{I\subseteq \mathbb Z_n}\left||S\cap I|-\frac{|S|}{n}|I|\right| \le \varepsilon n,$
  	where the supremum is taken over all the intervals $I\subseteq \mathbb Z_n$. Defining the $n$-letter word $W_S=w_1\dots w_n$, where $w_i=\vecone_S\{i\}$ for each $i\in [n]$, is easily seen that $S$ is $\varepsilon$-balanced if and only if $W_S$ is $(|S|/n,\varepsilon)$-uniform.}
  We next show that the properties Exponential sums
  and Equidistribution  are equivalent. Since $f(x)=\exp\left({2\pi i}kx\right)$ integrates to~$0$ and has Lipschitz norm at most $2|k|$, it is clear that the Equidistribution
    property implies the Exponential sums property.
  To show the converse, let $f:\RR/\ZZ\to\CC$ be given.
  We will  show that for any $\eps>0$ and for large $n$, the following holds for $d=\|\bw_n\|_1/n$:

  \[
  \Big|\frac 1n\sum_{j:\bw_n[j]=1}f(j/n)-d\int_{\RR/\ZZ}f\Big|\le \eps\|f\|_{\Lip}.
  \]
  Let $C$ and $c$ be the absolute constants from Lemma~\ref{lem:lip1} and
  Lemma~\ref{lem:lip2}, respectively. 
 Choose~$M$ large enough so that $M/\log M\geq 2C/\eps$ and $n$ large enough so that for all $|m|\leq M$
we have $\left|\sum_{j:\bw_n[j]=1}\exp\left(\tfrac{2\pi i}nmj\right)\right|<\frac{\eps}{2cM}n|m|$.
Applying this bound we obtain
\begin{align*}
  \sum_{j:\bw_n[j]=1}\sigma_Mf(j/n)
  &= \sum_{j:\bw_n[j]=1}\sum_{|m|\le M}\Big(1-\frac{|m|}{M+1}\Big)\widehat{f}(m)\exp\left(\tfrac{2\pi i}nmj\right)\\	
  &= \sum_{|m|\le M}\Big(1-\frac{|m|}{M+1}\Big)\widehat{f}(m)\sum_{j:\bw_n[j]=1}\exp\left(\tfrac{2\pi i}nmj\right)\\
  &= \widehat{f}(0)\cdot dn\pm \frac\eps{2cM} n  \sum_{0<|m|\le M}\left|\Big(1-\frac{|m|}{M+1}\Big)\widehat{f}(m)\right||m|.
\end{align*}
As $\widehat{f}(0)= \int_{\mathbb R/\mathbb Z}f$, we obtain
  from Lemma~\ref{lem:lip2} that  
  \begin{align*}
    \Big|\frac1n\sum_{j:\bw_n[j]=1}\sigma_Mf(j/n) - d\int_{\mathbb R/\mathbb Z}f\Big|
    \leq\frac\eps{2cM}\sum_{0<|m|\le M}\Big|\Big(1-\frac{|m|}{M+1}\Big)\widehat{f}(m)\Big||m| 
    \le\frac \eps2\|f\|_{\Lip}.
  \end{align*}
  By Lemma~\ref{lem:lip1},  triangle inequality
  and the choice of $M$ we conclude
\begin{align*}
\Big|\frac 1n\sum_{j:\bw_n[j]=1}f(j/n)-d\int_{\mathbb R/\mathbb Z}f\Big| 
  & \le \Big|\frac 1n\sum_{j:\bw_n[j]=1}\sigma_Mf(j/n)-d\int_{\mathbb R/\mathbb Z}f\Big |+C\|f\|_{\Lip}\frac{\log M}{M}\\
  & \leq \frac \eps2\|f\|_{\Lip}+\frac\eps2\|f\|_{\Lip}=\eps\|f\|_{\Lip}.
\end{align*}
This finishes the proof.
\end{proof}

\section{Limits of word sequences}\label{sec:limitTheory}
In this section we give the proof of Theorem~\ref{thm:limits}  concerning word limits. Although the overall approach is in line with what has been done for graphons~\cite{LS1} and permutons~\cite{HOPPEN}, there are important technical differences which we will stress below.
Central concepts and auxiliary results involved in the proof will be introduced along the way. 
The section is divided into four subsections.
We start by a simple reformulation of the notion of convergent word sequences in terms of 
 convergence of a function sequence in $\cW$. This notion is called $t$-convergence and we in Lemma~\ref{lem:f=g} show that the limit of a 
 $t$-convergent function sequence is unique, if it exists. In the second subsection, we endow $\cW$ with the interval-distance $d_\Box$
  and show in Lemma~\ref{lem:tcount} that convergence with respect to~$d_\Box$  implies $t$-convergence.
  Proposition~\ref{prop:tconvboxconv} from the same subsection gives a direct proof of the converse.
  In the third subsection, we specify a third and last notion of convergence (convergence in distribution) based on sampling of $f$-random letters for a given $f\in\cW$.
  We prove in Lemma~\ref{lem:boxconvdconv} that this notion of convergence is equivalent to the two previously defined, and deduce
  the compactness of the metric space $(\cW,d_\Box)$ in Theorem~\ref{cor:compact}.
  In the fourth and last part, we show in Lemma~\ref{lem:randomwords} and Corollary~\ref{cor:randomwords} that every element of $f\in\cW$ is, a.s.,
  the limit of a convergent random word sequence.}


{
  
\subsection{Uniqueness and $t$-convergence}\label{sec:tconv}
Given the   nature of the limit it is convenient to  first reformulate the notion of convergence in  analytic terms.
For a given word $\bw_n=(w_1,\dots, w_n)$ define the \emph{function associated to $\bw_n$} to be the $n$-step $0$-$1$-function $f_{\bw_n}\in\cW$
  given
by $f_{\bw_n}(x)=w_{\lceil nx\rceil}$.
It is then easy to see that $t(\bu,f_{\bw_n})$, as defined in~\eqref{eq:subseqdensity}, satisfies\footnote{\label{foot:countcount}To see~\eqref{eq:countcount}, split $[0,1]$ into $n$ intervals of equal lengths. 
Let $A$ denote the event that  $\ell$ independent uniform random  points  of $[0,1]$  land in different intervals
and let $B$ be the event that, after reordering these points, say $x_1<\dots<x_\ell$, we have $\big(f_{\bw_n}(x_1),\dots,f_{\bw_n}(x_\ell)\big)=\bu$.
Then, $t(\bu,f_{\bw_n})=\PP(B|A)\PP(A)+\PP(B|\overline{A})\PP(\overline{A})$ and we further have $\PP(B|A)=t(\bu,\bw_n)$ and $\PP(A)=\prod_{i=1}^{\ell-1}(1-i/n)=1-O(n^{-1})$.}
\begin{align}\label{eq:countcount}t(\bu,f_{\bw_n})=t(\bu,\bw_n)+O(n^{-1}\big)\qquad \text{for every word~$\bu$}.\end{align}
Thus the following, applied to $f_n=f_{\bw_n}$, yields a reformulation  of convergence of  $({\bw_n})_{n\to\infty}$. Given a sequence $(f_n)_{n\to\infty}$ in $\cW$ and $f\in\cW$, we say that 
\[
  {f_n}\overset{t}{\to} f \qquad\text{ if }\qquad  \lim_{n\to\infty}t (\bu,f_{n})=t(\bu,f)\quad \text{ for all finite words $\bu$.}
  \]

The next lemma implies that the limit, if it exists, is guaranteed to be unique. 
The idea of the proof goes back to  a remark of Kr\'al' and Pikhurko
concerning permutons (see~\cite[Remark 6]{KP13}).
\begin{lemma}\label{lem:f=g}
 Let $f,g\in\cW$. If $t(\bu,f)=t(\bu,g)$ for all words $\bu$, then $f=g$ almost everywhere.
\end{lemma}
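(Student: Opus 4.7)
The plan is to use the hypothesis $t(\bu,f)=t(\bu,g)$ to recover, for every $f\in\cW$, the numbers $\int_0^1 f(u)\,u^a(1-u)^b\,du$ for all integers $a,b\geq 0$, and then to appeal to Weierstrass approximation to conclude $f=g$ almost everywhere.

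First, I would exploit the probabilistic interpretation of $t(\bu,f)$ implicit in~\eqref{eq:subseqdensity}: sample $\ell$ i.i.d.\ uniform points $U_1,\dots,U_\ell\in[0,1]$ with order statistics $U_{(1)}<\dots<U_{(\ell)}$ and, conditionally on these, set $\xi_i\sim\mathrm{Bernoulli}(f(U_{(i)}))$ independently. Then by Fubini $t(\bu,f)=\PP[(\xi_1,\dots,\xi_\ell)=\bu]$ for every $\bu\in\{0,1\}^\ell$. Marginalising over all coordinates other than a fixed position $j\in[\ell]$ gives
\[
\sum_{\substack{\bu\in\{0,1\}^\ell\\ u_j=1}} t(\bu,f)=\PP[\xi_j=1]=\EE[f(U_{(j)})]=\frac{\ell!}{(j-1)!(\ell-j)!}\int_0^1 f(u)\,u^{j-1}(1-u)^{\ell-j}\,du,
\]
where the last equality uses the classical density of the $j$-th order statistic of $\ell$ i.i.d.\ uniform variables on $[0,1]$. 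The left-hand side is a concrete linear combination of sub-densities, so applying the same identity to $g$ and invoking the hypothesis gives, upon setting $\ell=a+b+1$ and $j=a+1$,
\[
\int_0^1 f(u)\,u^a(1-u)^b\,du=\int_0^1 g(u)\,u^a(1-u)^b\,du\qquad\text{for all }a,b\geq 0.
\]

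Since $\{u^a(1-u)^b : a,b\geq 0\}$ contains a basis of $\mathbb{R}[u]$ (already $u^a=u^a(1-u)^0$ suffices), linearity yields $\int_0^1 (f-g)\,P\,du=0$ for every polynomial $P$. The Weierstrass approximation theorem, combined with the uniform bound $|f-g|\leq 1$, then upgrades this to $\int_0^1 (f-g)\,\phi\,du=0$ for every continuous $\phi:[0,1]\to\mathbb{R}$. Approximating indicators of open, hence Borel, subsets of $[0,1]$ by continuous functions gives $\int_A f=\int_A g$ for every Borel $A\subseteq[0,1]$, and Lebesgue's differentiation theorem finally delivers $f=g$ almost everywhere.

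I do not foresee any substantial obstacle; the conceptual heart of the argument is the observation that the single-coordinate marginals of the natural random-sampling model encode precisely the integrals of $f$ against the Bernstein basis on $[0,1]$, which dovetails with the paper's stated reliance on Bernstein polynomials elsewhere.
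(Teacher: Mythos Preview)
Your proof is correct and follows essentially the same route as the paper's: express moments of $f$ as linear combinations of the densities $t(\bu,f)$ and then invoke Weierstrass approximation. The paper does this by a direct Fubini computation showing $\int_0^1 f(x)x^k\,dx=\tfrac{1}{k+1}\sum_{\bu\in\{0,1\}^k}t(u_1\dots u_k1,f)$, which is exactly your identity specialised to the last position $j=\ell$; your probabilistic framing via order statistics recovers the full Bernstein family $u^a(1-u)^b$ rather than just the monomials, but since either family spans the polynomials this extra generality is not needed.
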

\begin{proof}Given $k\in\NN$, note that
  \begin{align*}
   \int_{0}^1f(x)x^k\diff x 
    & = \int_0^1f(x)\Big(\int_0^x\diff y\Big)^k\diff x=\int_{y_1,\dots,y_k\le x}f(x)\diff y_1\dots \diff y_k\diff x\\
    & = k!\int_{y_1<\dots<y_k< x}f(x)\diff y_1\dots \diff y_k\diff x=\frac{1}{k+1}\sum_{\bu\in\{0,1\}^k}t(u_1\dots u_k1,f)\\
    & = \frac{1}{k+1}\sum_{\bu\in\{0,1\}^k}t(u_1\dots u_k1,g)=\int_0^1g(x)x^k\diff x.
\end{align*}
Thus, for each polynomial $P(x)\in \mathbb{R}[x]$ we get 
$\int_0^1f(x)P(x)\diff x=\int_0^1g(x)P(x)\diff x,$
and  by the Stone--Weierstrass theorem 
$\int_0^1f(x)h(x)\diff x=\int_0^1g(x)h(x)\diff x$
holds for every continuous function $h:[0,1]\to\mathbb R$. This implies that $f=g$ almost everywhere.
\end{proof}

\subsection{Interval-metric and the metric space $(\cW,d_\Box)$}
In view of  the equivalence of uniformity and
subsequence counts shown in Theorem~\ref{thm:quasirandom}, it is natural to consider the following {notions of norm, distance and convergence, which are all analogues of the notions of cut-norm, cut-distance and convergence in graph limit theory}. Given  $h:[0,1]\to[-1,1]$ define the interval-norm 
\[
\|h\|_{\Box}=\sup_{I\subseteq[0,1]} \left|\int_I h(x)\diff x\right|,
\]
where the supremum is taken over all intervals $I\subseteq[0,1]$.
The interval-metric $d_\Box$ is then defined by
\[
d_{\Box}(f,g)=\|f-g\|_\Box\qquad \text{ for every $f,g:[0,1]\to[0,1]$,}
\]
and we write 
\[
  f_n\overset{\Box}{\to} f \qquad\text{ if }\qquad \lim_{n\to\infty} d_{\Box}(f_{n},f)=0.
  \]
The following result states that the interval-norm
controls subsequence counts, in particular,  $f_n\overset{\Box}{\to} f$ implies $f_n\overset{t}{\to} f$. 
As a byproduct of the lemma, we  obtain the first part of Theorem~\ref{thm:quasirandom} concerning counting subsequences  in uniform words. 

\begin{lemma}\label{lem:tcount}
For $f,g\in\cW$ and $\bu\in\{0,1\}^\ell$ we have
\[
\big|t(\bu,f)-t(\bu,g)\big|\leq\ell^2\cdot d_\Box(f,g).
\]
In particular, if $\bw\in\{0,1\}^n$ is $(d,\eps)$-uniform and $n=n(\eps,\ell)$
is sufficiently large, then for some $d\in [0,1]$
we have for each $\bu\in\{0,1\}^\ell$ 
\[
\tbinom {\bw}\bu=d^{\|\bu\|_1}(1-d)^{\ell-\|\bu\|_1}\tbinom n\ell\pm 5\eps n^\ell.\]
\end{lemma}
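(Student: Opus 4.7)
The plan is to prove the bound $|t(\bu,f)-t(\bu,g)|\le \ell^2\, d_\Box(f,g)$ by a standard telescoping/hybridization argument, and then derive the subsequence count for a uniform word $\bw$ by applying this bound with $f=f_\bw$ and $g\equiv d$, the constant function, after verifying that uniformity of $\bw$ translates into a small interval-distance from $d$.

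\smallskip
For the first inequality, I would write the difference of products as a telescoping sum
\[
\prod_{i=1}^\ell f^{u_i}(x_i)-\prod_{i=1}^\ell g^{u_i}(x_i)=\sum_{j=1}^\ell\Bigl(\prod_{i<j}g^{u_i}(x_i)\Bigr)\bigl(f^{u_j}(x_j)-g^{u_j}(x_j)\bigr)\Bigl(\prod_{i>j}f^{u_i}(x_i)\Bigr),
\]
so that $t(\bu,f)-t(\bu,g)=\sum_{j=1}^\ell D_j$ with $D_j$ the $\ell!$-scaled integral of the $j$-th summand over the simplex $0\le x_1<\dots<x_\ell\le 1$. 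For fixed values of $x_i$ with $i\neq j$, the inner integral in $x_j$ ranges over the interval $(x_{j-1},x_{j+1})$ (with the obvious conventions at the endpoints) and equals
$\pm\int_{x_{j-1}}^{x_{j+1}}(f-g)(x_j)\,\diff x_j$, since $f^{u_j}-g^{u_j}$ is $\pm(f-g)$. This integral is at most $d_\Box(f,g)$ in absolute value. The remaining factors are bounded by $1$, and the outer integral over the ordered $(\ell-1)$-tuple contributes $1/(\ell-1)!$. Thus $|D_j|\le \ell!\, d_\Box(f,g)/(\ell-1)!=\ell\, d_\Box(f,g)$, and summing over $j$ yields the claimed bound $\ell^2 d_\Box(f,g)$.

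\smallskip
For the second claim, let $\bw\in\{0,1\}^n$ be $(d,\eps)$-uniform and take $g\equiv d$. A direct evaluation gives $t(\bu,g)=d^{\|\bu\|_1}(1-d)^{\ell-\|\bu\|_1}$. To bound $d_\Box(f_\bw,g)$, fix any interval $I=[a,b]\subseteq[0,1]$. Since $f_\bw$ is constant on each piece $((i-1)/n,i/n]$, the integral $\int_I f_\bw$ equals $\frac1n\sum_{i\in J}w_i$ plus a boundary error of size $O(1/n)$, where $J\subseteq[n]$ is the interval of indices $i$ with $((i-1)/n,i/n]\subseteq I$. By $(d,\eps)$-uniformity, $\sum_{i\in J}w_i=d|J|\pm\eps n$, while $\int_I g=d(b-a)=d|J|/n\pm O(1/n)$. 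Combining these, $|\int_I(f_\bw-g)|\le \eps+O(1/n)\le 2\eps$ for $n$ large enough in $\eps$, so $d_\Box(f_\bw,g)\le 2\eps$.

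\smallskip
The first inequality then gives $|t(\bu,f_\bw)-d^{\|\bu\|_1}(1-d)^{\ell-\|\bu\|_1}|\le 2\ell^2\eps$. Multiplying through by $\binom n\ell$ and combining with the identity $t(\bu,f_\bw)=t(\bu,\bw)+O(n^{-1})$ from~\eqref{eq:countcount}, we obtain
\[
\tbinom\bw\bu=d^{\|\bu\|_1}(1-d)^{\ell-\|\bu\|_1}\tbinom n\ell\pm 2\ell^2\eps\tbinom n\ell\pm O(n^{\ell-1}).
\]
Since $\binom n\ell\le n^\ell/\ell!$, the factor $2\ell^2/\ell!$ never exceeds $4$, and after absorbing the lower-order $O(n^{\ell-1})$ term for $n$ large enough (depending on $\eps$ and $\ell$) the error is at most $5\eps n^\ell$, as required. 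The only mildly delicate point is checking the constants case-by-case for small $\ell$ to confirm that $5\eps n^\ell$ suffices uniformly; the telescoping estimate itself and the interval-integral comparison are routine.
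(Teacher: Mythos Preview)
Your proof is correct and follows essentially the same approach as the paper: the same telescoping decomposition (with the inessential difference that you freeze $g$ on the left and $f$ on the right of index $j$, while the paper does the reverse), the same bound on the inner integral via $d_\Box$, and the same derivation of the second part by comparing $f_\bw$ to the constant function $d$ and invoking~\eqref{eq:countcount}. Your tracking of the constant $5$ via $2\ell^2/\ell!\le 4$ is a bit more explicit than the paper's, but the argument is the same.
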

\begin{proof}
We first show that the second part follows from the first. Given a $(d,\eps)$-uniform word $\bw\in\{0,1\}^n$, let $f:[0,1]\to[0,1]$ be the function associated to $\bw$.
Define $g:[0,1]\to[0,1]$ to be constant equal to $d$ and recall that $g^1=g$ and $g^0=1-g$. Then, for each $\bu\in\{0,1\}^\ell$
\begin{align*}t(\bu,g)=\ell!\int_{0\leq x_1<\dots<x_\ell\leq1}\prod_{i\in[\ell]}g^{u_i}(x_i)\diff x_1\dots \diff x_\ell=d^{\|\bu\|_1}(1-d)^{\ell-\|\bu\|_1}.\end{align*}
Since $d_\Box(f,g)\leq 2\eps$ due to uniformity of $\bw$, 
for large $n$, the second part of the lemma follows from the first part
and~\eqref{eq:countcount}  as
\[\tbinom \bw\bu=t(\bu,f)\tbinom n\ell\pm \eps n^\ell=t(\bu,g)\tbinom n\ell\pm 5\eps n^\ell
=d^{\|\bu\|_1}(1-d)^{\ell-\|\bu\|_1}\tbinom n\ell\pm 5\eps n^\ell.\]
Now we turn to the proof of the first part.
Let
\[
X_j(x_1,\dots,x_\ell)=\big(f^{u_j}(x_j)-g^{u_j}(x_j)\big)\prod_{i=1}^{j-1}f^{u_i}(x_i)\prod_{i=j+1}^\ell g^{u_i}(x_i).
\]
Making use of a telescoping sum we write
\begin{align*}
\big|t(\bu,f)-t(\bu,g)\big|&= \ell!\Big|\int_{x_1<\dots<x_\ell}\Big(\prod_{j\in[\ell]}f^{u_j}(x_j)-\prod_{j\in[\ell]}g^{u_j}(x_j)\Big) \diff x_1\dots \diff x_\ell\Big|\\
&= \ell!\Big|\int_{x_1<\dots<x_\ell}\sum_{j\in[\ell]}X_j(x_1,\dots,x_\ell)\diff x_1\dots \diff x_\ell\Big|\\
&\leq \ell!\sum_{j\in[\ell]}\Big|\int_{x_1<\dots<x_\ell}X_j(x_1,\dots,x_\ell)\diff x_1\dots \diff x_\ell\Big|.
\end{align*}
Since $\displaystyle\Big|\int_{x_{j-1}}^{x_{j+1}} \big(f^{u_j}(x_j)-g^{u_j}(x_j)\big)\diff x_j\Big|\leq d_\Box(f,g)$ and $0\leq f, g\leq 1$, for $j\in[\ell]$ we have
\[
\Big|\int_{x_{j-1}}^{x_{j+1}}X_{j}(x_1,...,x_\ell)\diff x_j\Big|
  \leq d_{\Box}(f,g)\prod_{i=1}^{j-1}f^{u_i}(x_i)\prod_{i=j+1}^\ell g^{u_i}(x_i){\leq d_{\Box}(f,g)}.
\]
Hence,
\begin{align*}
  & \Big|\int_{x_1<\dots<x_{\ell}} X_j(x_1,\dots,x_{\ell})\diff x_1\dots \diff x_{\ell}\Big| \\
  & \qquad \leq
  d_\Box(f,g)\int_{\substack{x_1<\dots<x_{j-1}\\ \leq x_{j+1}<\dots<x_{\ell}}}
    \diff x_1\dots \diff x_{j-1} \diff x_{j+1}\dots\diff x_{\ell} \\
    & \qquad
    \leq \frac{1}{(\ell-1)!}d_\Box(f,g)
\end{align*}
and the first part of the lemma follows.
\end{proof}

\begin{remark}\label{remark:alphabetcounting}
We note that the same argument extends without change to larger size alphabets
in the following sense.
Given an alphabet $\Sigma=\{a_1,\dots, a_k\}$, let 
$\vecf=(f^{a_1},\dots ,f^{a_k})$ and $\vecg=(g^{a_1},\dots,g^{a_k})$ be two tuples of functions
$f^{a_i},g^{a_i}:[0,1]\to[0,1]$, for $i\in[k]$, such that
\[f^{a_1}(x)+\dots+f^{a_k}(x)=1\mbox{ and } g^{a_1}(x)+\dots+g^{a_k}(x)=1 \mbox{ almost everywhere.}\]
For a word $\bu\in\Sigma^\ell$, define the density of $\bu$ in $\vecf$ in
similar manner as in~\eqref{eq:subseqdensity}, namely
\[
t(\bu,\vecf)=\ell!\int_{0\leq x_1<\dots<x_\ell\leq 1}\prod_{i\in[k]}f^{u_i}(x_i)\diff x_1\dots \diff x_\ell.
\]
Then, the  proof from above yields
\[
\big|t(\bu,\vecf)-t(\bu,\vecg)\big|\leq\ell^2\cdot\max_{i\in[k]}d_{\Box}(f^{a_i},g^{a_i}).
\]
\end{remark}

Note that Lemma~\ref{lem:tcount} implies
that if $f_n\overset{\Box}{\to} f$, then $f_n\overset{t}{\to} f$.
Our goal now is to show that the converse also holds.
Let $(f_n)_{n\to\infty}$ be a sequence such that $f_n\overset{t}{\to} f$.
Following the proof of Lemma~\ref{lem:f=g}, we will use that for any
polynomial $P(x)\in\mathbb R[x]$ we can write $\int_0^1(f_n(x)-f(x))P(x)$
as a linear combination of subsequence densities.
By approximating $\mathbf 1_{[a,b]}(x)$ by a polynomial $P_{a,b}(x)\in \mathbb R[x]$, with error term uniform in $0\le a<b\le 1$, we may show that
$\int_0^1(f_n(x)-f(x))\mathbf1_{[a,b]}(x)$ can be approximated by
$\int_0^1(f_n(x)-f(x))P_{a,b}(x)$, thence by
a linear combination of subsequence densities,  implying our claim. In order to prove this approximation result, we introduce next the class of Bernstein polynomials.

Even though here we only need to approximate functions on $[0,1]$, we will consider the general case of functions on $[0,1]^k$ since it will later be useful
in our study of higher dimensional combinatorial structures. For $k,t\in\mathbb N\setminus\{0\}$, let $\bi=(i_1,\dots i_k)\in[t]^k$. Given a function $J:[0,1]^k\to\RR$, define its Bernstein polynomial evaluated at $\bx=(x_1,\dots,x_k)\in [0,1]^k$ by
\[B_{t, J}(\bx)=\sum_{0\leq i_1,\dots,i_k\leq t}J(\tfrac{\bi}{t})\prod_{j\in[k]}{\tbinom{t}{i_j}x_j^{i_j}(1-x_{j})^{t-i_j}}.\]
We can now formally state the approximation of indicator functions
we use.
\begin{lemma}\label{lem:Bta2}
For $\ba=(a_1,\dots, a_k)\in[0,1]^k$ let $J=\vecone_{[0,a_1]\times\dots\times [0,a_k]}$.
If $r\in\NN$ and $\bx\in[0,1]^k$ satisfy $|x_i-a_i|>r^{-1/4}$ for all $i\in[k]$, then
	$|B_{r,J}(\bx)-J(\bx)|\leq k r^{-1/2}.$
\end{lemma}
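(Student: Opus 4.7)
The plan is to exploit the product structure of $J$. Because $J(\bx)=\prod_{j=1}^{k}\vecone_{[0,a_j]}(x_j)$, the Bernstein polynomial factors as
\[
B_{r,J}(\bx)=\prod_{j=1}^{k}F_j(x_j),\qquad F_j(x_j)=\sum_{0\le i\le r a_j}\tbinom{r}{i}x_j^{\,i}(1-x_j)^{r-i}.
\]
Probabilistically, $F_j(x_j)=\PP(Z_j\le ra_j)$, where $Z_j$ is a $\text{Binomial}(r,x_j)$ random variable. This reduction is the main point; from here everything is a one-dimensional Chebyshev estimate applied coordinatewise.

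Next I would use that $\mathrm{Var}(Z_j)=rx_j(1-x_j)\le r/4$, so that Chebyshev's inequality gives
\[
\PP\bigl(|Z_j-rx_j|\ge r^{3/4}\bigr)\le\frac{r/4}{r^{3/2}}=\frac{1}{4\sqrt{r}}.
\]
For any coordinate with $x_j>a_j+r^{-1/4}$, one has $F_j(x_j)=\PP(Z_j-rx_j\le r(a_j-x_j))\le \PP(|Z_j-rx_j|\ge r^{3/4})\le (4\sqrt{r})^{-1}$; symmetrically, for $x_j<a_j-r^{-1/4}$ one has $1-F_j(x_j)\le (4\sqrt{r})^{-1}$.

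To finish, I split into two cases. If some coordinate satisfies $x_j>a_j+r^{-1/4}$, then $J(\bx)=0$ and $B_{r,J}(\bx)\le F_j(x_j)\le (4\sqrt{r})^{-1}\le kr^{-1/2}$. Otherwise $x_j<a_j-r^{-1/4}$ for every $j$, so $J(\bx)=1$ and
\[
B_{r,J}(\bx)=\prod_{j=1}^{k}F_j(x_j)\ge\Bigl(1-\tfrac{1}{4\sqrt{r}}\Bigr)^{k}\ge 1-\frac{k}{4\sqrt{r}}\ge 1-kr^{-1/2},
\]
by Bernoulli's inequality. In both cases $|B_{r,J}(\bx)-J(\bx)|\le kr^{-1/2}$, as desired.

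There is no real obstacle: once the factorization of $B_{r,J}$ is recognized, the argument collapses to a uniform Chebyshev bound on a binomial. The only mild subtlety is that in the ``product'' case one must use that the factor $F_j$ at a coordinate with $x_j>a_j$ is already small enough on its own to kill the product, while in the ``all $x_j<a_j$'' case one needs the bound for \emph{every} factor simultaneously, which is why the final error carries the factor $k$.
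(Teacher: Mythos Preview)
Your proof is correct and follows essentially the same approach as the paper: both interpret $B_{r,J}(\bx)$ probabilistically in terms of independent $\mathrm{Binomial}(r,x_j)$ random variables and apply Chebyshev's inequality coordinatewise. The only difference is cosmetic---you exploit the product structure of $J$ to factor $B_{r,J}$ and then case-split, while the paper writes $B_{r,J}(\bx)=\EE\, J(r^{-1}\bX)$ and bounds the error by the union-bound probability that some coordinate of $\bX$ deviates from its mean by more than $r^{3/4}$; your version even gives the slightly sharper constant $k/(4\sqrt{r})$.
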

\begin{proof}
  Let $B=B_{r,J}$
  and let $\bX=(X_1,...,X_r)$ be such that $X_1,...,X_r$ are independent random
  variables where $X_j$ follows a binomial distribution with parameters $r$ and $x_j$, so
      $\PP(X_j=i)=\binom{r}
      {i}x_j^i(1-x_j)^{r-i}$.
  Note that $B_{r,J}(\bx) = \EE(J(\tfrac1r\bX))$.
  Let $L=\{\bi \colon \|\bx-\tfrac{\bi}r\|_{\infty}> r^{-1/4}\}\subseteq (\{0\}\cup [r])^k$.
  As  $|x_j-a_j|>r^{-1/4}$ for all $j\in[k]$, for each $\bi \not\in L$ we have
  that $J(\tfrac {\bi}r)=J(\bx )$ and thus
  \[
  \EE\big(\big|J(\tfrac{1}{r}\bX)-J(\bx)\big|\mathbf{1}_{{\overline{L}}}(\bX)\big)
  = 0.
  \]
  Due to $\big|J(\tfrac {\bi}r)-J(\bx)\big|\leq 1$  we have 
  \begin{align}\label{eq:Lk}
    \EE\big(|J(\tfrac1r\bX)-J(\bx)|\big)
    = \EE\big(|J(\tfrac1r\bX)-J(\bx)|\mathbf{1}_L(\bX)\big)
    \leq \PP(\bX\in L)
    \leq \sum_{\ell\in [k]}\PP(|\tfrac{1}{r}X_\ell-x_\ell|>r^{-1/4}).
  \end{align}
  Since $\EE(X_{\ell})=rx_\ell$, by Chebyshev's inequality, 
  \[
  \PP(|\tfrac{1}{r}X_\ell-x_\ell|>r^{-1/4})=\PP(|X_{\ell}-\EE(X_\ell)|>r^{3/4})\leq \frac{1}{r^{3/2}}rx_\ell(1-x_\ell)
  \leq r^{-1/2}
  \]
  Since the bound holds for every $\ell\in [k]$,
  the RHS of~\eqref{eq:Lk} is at most $kr^{-1/2}$, as required.
\end{proof}
Given two functions $f,g\in\cW$, we have the inequality
   \begin{equation}\label{bound:distribution}\sup_{b\in[0,1]}\left|\int_0^bf(x)\diff x-\int_0^bg(x)\diff x\right|\le d_\Box(f,g)\le 2 \sup_{b\in[0,1]}\left|\int_0^bf(x)\diff x-\int_0^bg(x)\diff x\right|.  \end{equation}
The first inequality in~\eqref{bound:distribution} is direct from the  definition of $d_\Box$, and the second inequality follows from the identity  $\int_0^b(f(x)-g(x))=\int_a^b(f(x)-g(x))+\int_0^a(f(x)-g(x))$.

The following proposition states that $t$-convergence implies convergence with respect to $d_\Box$, and thus, together with Lemma~\ref{lem:tcount}, establishes that both notions of convergence are equivalent.
\begin{proposition}\label{prop:tconvboxconv}
  If $(f_n)_{n\to\infty}$ is a sequence in $\cW$ which is $t$-convergent,
  then it is a Cauchy sequence with respect to $d_{\Box}$.
  Moreover, if  $f_n\stackrel{t}\to f$ for some $f\in\mathcal W$, then $f_n\stackrel{\Box}\to f.$
\end{proposition}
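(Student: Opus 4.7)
The plan is to reduce, via the second inequality in~\eqref{bound:distribution}, to proving that $\sup_{b\in[0,1]}\bigl|\int_0^b(f_n-f_m)\diff x\bigr|$ is small for $n,m$ large. To achieve this uniformly in $b$, I will approximate $\vecone_{[0,b]}$ by its Bernstein polynomial $P_b:=B_{r,\vecone_{[0,b]}}$ from Lemma~\ref{lem:Bta2}, and split
\[
\int_0^b(f_n-f_m)\diff x=\int_0^1(f_n-f_m)P_b\diff x+\int_0^1(f_n-f_m)\bigl(\vecone_{[0,b]}-P_b\bigr)\diff x.
\]

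For the approximation error term, Lemma~\ref{lem:Bta2} applied with $k=1$ and $a=b$ gives $|P_b(x)-\vecone_{[0,b]}(x)|\le r^{-1/2}$ outside an interval of length at most $2r^{-1/4}$ centered at $b$, where one uses the trivial bound $|P_b-\vecone_{[0,b]}|\le 1$. Combined with $|f_n-f_m|\le 1$, this yields an overall bound of $r^{-1/2}+2r^{-1/4}$, which is at most $\eps/2$ for $r=r(\eps)$ large enough. For the polynomial integral I will expand $P_b(x)=\sum_{k=0}^r c_k(b)x^k$ and apply the identity from the proof of Lemma~\ref{lem:f=g}, namely $\int_0^1 h(x)x^k\diff x=(k+1)^{-1}\sum_{\bu\in\{0,1\}^k}t(u_1\dots u_k 1,h)$, to rewrite $\int_0^1(f_n-f_m)P_b\diff x$ as a linear combination (with $c_k(b)$-weighted coefficients) of finitely many differences $t(\bu,f_n)-t(\bu,f_m)$ over words $\bu$ of length at most $r+1$. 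Since $(t(\bu,f_n))_n$ is Cauchy for each such fixed $\bu$ by $t$-convergence, choosing $N$ large enough makes this quantity at most $\eps/2$, uniformly in $b$. Combining both bounds yields the Cauchy claim. The \emph{moreover} statement follows by the identical argument with $f_m$ replaced by $f$, using $t(\bu,f_n)\to t(\bu,f)$ instead of the Cauchy property.

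The main obstacle is ensuring that every estimate is uniform in $b$. The threshold $r^{-1/4}$ in Lemma~\ref{lem:Bta2} is already $b$-independent, so the approximation error bound is uniform. For the polynomial part, uniformity hinges on showing that the monomial coefficients $c_k(b)$ of $P_b$ are bounded in absolute value by a constant depending only on $r$; this follows from $\vecone_{[0,b]}(i/r)\in\{0,1\}$ together with the fact that the coefficients in the monomial expansion of $\binom{r}{i}x^i(1-x)^{r-i}$ are bounded in absolute value by $4^r$. With both uniformity features in hand, the proof fits into the familiar template of approximating indicators by polynomials and invoking convergence on a finite collection of test functions.
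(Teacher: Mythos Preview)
Your proposal is correct and follows essentially the same approach as the paper: reduce via~\eqref{bound:distribution} to bounding $\sup_b\bigl|\int_0^b(f_n-f_m)\bigr|$, approximate $\vecone_{[0,b]}$ by its Bernstein polynomial, control the approximation error via Lemma~\ref{lem:Bta2}, and express the polynomial integral as a finite linear combination of density differences $t(\bu,f_n)-t(\bu,f_m)$ using the moment identity from the proof of Lemma~\ref{lem:f=g}. The only cosmetic difference is that the paper keeps the Bernstein basis form $\sum_{i\le j_a}\binom{r}{i}x^i(1-x)^{r-i}$ when bounding the polynomial term (arriving at a $2^{3r}\delta$ bound), whereas you expand fully into monomials and bound the coefficients $c_k(b)$; both yield constants depending only on $r$, uniformly in $b$.
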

\begin{proof}
  Given $\eps>0$, let $r=\lceil(20/\eps)^4\rceil$.
  For $\delta=\eps/2^{3r+2}$, let $n_0$ be sufficiently large so that for all $n,m\geq n_0$ we have 
  \begin{align}\label{eq:cauchy}\big|t(\bu,f_n)-t(\bu,f_m)\big|\leq \delta\quad \text{ for all } \bu\in\bigcup_{s\in [r]}\{0,1\}^{s}.\end{align}
	Recall from the proof of Lemma~\ref{lem:f=g}, that for each $k\in\mathbb N$ we have
\[\int_{0}^1f(x)x^k\diff x=\frac{1}{k+1}\sum_{\bu\in\{0,1\}^k}t(u_1\dots u_k1,f).\]
	Thus, for $k\leq r$ and $h=f_n-f_m$, we have
	\begin{align*}
          \Big|\int_{0}^1h(x)x^k\diff x\Big|
          =\frac{1}{k{+}1}\Big|\sum_{\bu\in\{0,1\}^k}(t(u_1\dots u_k1,f_n)-t(u_1,\dots, u_k1,f_m))\Big|\leq {2^k\delta}.            
        \end{align*}
	For $a\in [0,1]$, let
        $J_a=\vecone_{[0,a]}$ and $j_a$ be the largest {integer} such that
        $\tfrac{j_a}r\leq a$. Then,
	\[
	\left|\int_0^1 h(x)B_{r,J_a}(x)\diff x\right|\le\sum_{i=0}^{j_a}\tbinom ri\left|\int_0^1  h(x)x^i(1-x)^{r-i}\diff x\right|
        \leq 2^{3r}\delta.
	\]
        Thus, since $|h|\leq 1$ and $|\vecone_{[0,a]}(x)-B_{r,J_a}|\leq 2$, by Lemma~\ref{lem:Bta2}, we have

\begin{align*} \left|\int_0^1 h(x)\vecone_{[0,a]}(x)\diff x\right|
	&\le
	 \left|\int_0^1 h(x)B_{r,J_a}(x)\diff x\right| + \left|\int_0^1 h(x)(\vecone_{[0,a]}(x)-B_{r,J_a}(x))\diff x\right|\\
	 \\\nonumber
	&\leq   2^{3r}\delta + (4r^{-1/4}+r^{-1/2}).
\end{align*}
The desired conclusion follows from~\eqref{bound:distribution} and by our choice of $t$ and $\delta$ observing
  that 
\[
d_{\Box}(f_n,f_m)
\leq 2\sup_{a\in[0,1]}\left|\int_0^1h(x)\vecone_{[0,a]}(x)\diff x\right| 
\leq 2^{3r+1}\delta+10r^{-{1/4}}\leq\eps.
\]
The second part follows by replacing $f_m$ by $f$ in~\eqref{eq:cauchy},
  taking $h=f_n-f$, and repeating the above argument. 
\end{proof}

The compactness of the metric space $(\cW,d_\Box)$ can be easily
established via the Banach--Alaoglu theorem in $L^{\infty}([0,1])$.
Instead, we follow a different strategy laid out in the
following section. This strategy  has the advantage that it emphasizes the
probabilistic point of view of convergence. It is based on a new model of random
words that naturally arises from the theory and that may be of independent interest.    

We note that one can also establish the
compactness of $(\cW,d_\Box)$ by using the regularity lemma for
words~\cite{APP13}.  This approach has the advantage of being more
constructive and for the sake of completeness we include it in the 
Appendix~\ref{sec:appendix}.

\subsection{Random letters from limits and compactness of $(\cW,d_\Box)$} 
Consider  the {Euclidean} metric on $[0,1]$ and the discrete metric on $\{0,1\}$. 
Let  $\Omega=[0,1]\times\{0,1\}$ be equipped with the $L_\infty$-distance, which  thus assigns to a pair of points in $\Omega$ the standard distance of their  first coordinates 
if the second coordinates agree and  one otherwise. Let $\cB$ denote the Borel $\sigma$-algebra of $\Omega$, let
  $f:[0,1]\to[0,1]$ be  a Borel measurable function and
recall that $f^1=f$ and $f^0=1-f$. 
{Also, denote by $\rU([0,1])$ and $\rB(p)$ the uniform distribution over $[0,1]$ and the Bernoulli distribution with expected value $p\in [0,1]$, respectively. We say that
  \[
  \text{$(X,Y)\in\Omega$ is an $f$-random letter}
  \qquad\text{ if }\qquad
  \text{$X\sim\rU([0,1])$ and $Y\sim\rB(f(X))$.}
  \]}
Observe that an $f$-random letter $(X,Y)$ is a pair of mixed\footnote{Mixed in the sense that $X$ is continuous while $Y$ is discrete.} random variables where  $Y$ is distributed according to the conditional pmf
\[
f_{Y|X}(\eps|x)
  =\PP(Y=\eps|X=x)=f^\eps(x)\qquad\eps\in\{0,1\} \text{ and } x\in[0,1].
\]
Then, $(X,Y)$ has the mixed joint probability distribution 
\begin{align}\label{eq:jointXY}
  F(x,\eps)=\PP(X\leq x, Y=\eps)=\int_0^xf^\eps(t)\diff t,
\end{align}
and thus the mixed joint pmf $f_{X,Y}(x,\eps)=f^\eps(x)$. The marginal probability distribution of $Y$ is
 \[\PP(Y=\eps)=F(1,\eps)=\int_0^1f^\eps(t)\diff t,\qquad \eps\in\{0,1\},\]
 hence  $Y\sim\rB(p)$ with $p=\int_0^1f(t)\diff t$.
Furthermore, conditioned on $Y$ the variable~$X$  is distributed according to the conditional pmf $f_{X|Y}$ which satisfies 
\begin{align}\label{eq:fXY}f_{X|Y}(x|\eps)\cdot\PP(Y=\eps)=f_{X,Y}(x,\eps)={f^\eps(x)}.\end{align}
One may therefore equivalently sample $(X,Y)$ by first choosing  $Y\sim\rB(p)$ with  $p=\int_0^1f(t)\diff t$, and
then choose $X$ (conditional on $Y$)  according to the conditional pmf $f_{X|Y}$ satisfying~\eqref{eq:fXY}.
By means of this sampling procedure a sequence
$(f_n)_{n\to\infty}$ gives rise to a sequence $\big((X_n,Y_n)\big)_{n\to\infty}$, where each $(X_n,Y_n)$ is the $f_n$-random letter,
 and the corresponding sequence of probability distributions $(\PP_n)_{n\to\infty}$ is as defined in~\eqref{eq:jointXY}.
 As usual for general metric spaces (see, e.g.,~\cite[Chapter 5]{billing}), 
 we say that $\big((X_n,Y_n)\big)_{n\to\infty}$ converges to $(X,Y)$ in distribution if  $(\PP_n)_{n\to\infty}$ weakly converges to $\PP$, i.e.,
 if for all bounded continuous functions $h:\Omega\to \mathbb R$ we have
\begin{align}\label{eq:defdconv}\lim_{n\to\infty}\int_\Omega h\diff\PP_n=\int_\Omega h\diff\PP.\end{align}
From this definition we immediately have the following. 
\begin{fact}\label{fact:jointmarginal}
If $\big((X_n,Y_n)\big)_{n\to\infty}$ converges to $(X,Y)$ in distribution, then $(X_n)_{n\to\infty}$ (resp., $(Y_n)_{n\to\infty}$) converges to $X$ (resp. $Y$) in
distribution. 
\end{fact}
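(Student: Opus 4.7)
The plan is to unwind the definition of convergence in distribution and apply it to specific test functions. Specifically, to establish that $(X_n)_{n\to\infty}$ converges to $X$ in distribution, I need to verify that for every bounded continuous $g:[0,1]\to\RR$ one has $\EE[g(X_n)]\to\EE[g(X)]$. The trick will be to promote such a $g$ to a bounded continuous function on the product space $\Omega$ and then invoke the hypothesis through~\eqref{eq:defdconv}.

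First I would set $h:\Omega\to\RR$ by $h(x,\eps)=g(x)$ and check that $h$ is bounded (clear, since $g$ is) and continuous with respect to the $L_\infty$-distance on $\Omega$. This last point is essentially the only thing to verify: if $(x_k,\eps_k)\to(x,\eps)$ in the $L_\infty$-distance, then because the discrete component contributes distance $1$ whenever $\eps_k\neq\eps$, we must have $\eps_k=\eps$ for all sufficiently large $k$, and $x_k\to x$ in $[0,1]$; continuity of $g$ then gives $h(x_k,\eps_k)=g(x_k)\to g(x)=h(x,\eps)$. With this in hand, the hypothesis yields
\[
\EE[g(X_n)]=\int_\Omega h\,\diff\PP_n\longrightarrow \int_\Omega h\,\diff\PP=\EE[g(X)],
\]
proving the marginal convergence of $(X_n)$ to $X$.

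For the marginal convergence of $(Y_n)$ to $Y$, I would proceed analogously: given any bounded function $g:\{0,1\}\to\RR$ (which is automatically continuous because $\{0,1\}$ carries the discrete metric), define $\tilde h:\Omega\to\RR$ by $\tilde h(x,\eps)=g(\eps)$. Continuity of $\tilde h$ follows from the same observation as above (the $L_\infty$-distance forces the discrete coordinate to eventually stabilize), and boundedness is immediate. Applying~\eqref{eq:defdconv} to $\tilde h$ gives $\EE[g(Y_n)]\to\EE[g(Y)]$, which is the required convergence in distribution of $Y_n$ to $Y$.

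There is no real obstacle here; the content of the fact is bookkeeping, and the only mildly delicate point is noting that the coordinate projections from $(\Omega,L_\infty)$ onto its factors are continuous, which is immediate from the definition of the $L_\infty$-distance together with the fact that on a discrete space every convergent sequence is eventually constant.
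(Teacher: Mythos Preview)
Your proof is correct and is precisely the natural verification of the claim; the paper itself gives no proof, merely asserting that the fact follows immediately from the definition~\eqref{eq:defdconv}. Your argument---lifting a bounded continuous function on a factor to one on $\Omega$ via the (continuous) projection and then invoking the hypothesis---is exactly what ``immediately'' means here.
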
 We now  write 
\[
  {f_n}\overset{\mathrm d}{\to} f \qquad\text{ if }\qquad  \text{$\big((X_n,Y_n)\big)_{n\to\infty}$ {converges to $(X,Y)$ in distribution}}.
\]
The next lemma shows the equivalences of convergence in $d_\Box$ and
convergence in distribution.
\begin{lemma}\label{lem:boxconvdconv}
  Let $f_1,f_2,\dots$ and $f$ be functions in $\cW$.
  Then, $f_n\overset{\Box}{\to} f$ if and only if $f_n\overset{\mathrm{d}}{\to} f$.
\end{lemma}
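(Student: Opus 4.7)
The plan is to reformulate both notions of convergence as statements about the antiderivatives $G_n(b)=\int_0^b f_n(t)\diff t$ and $G(b)=\int_0^b f(t)\diff t$. By~\eqref{bound:distribution}, $d_\Box(f_n,f)\to 0$ is equivalent to $G_n\to G$ uniformly on $[0,1]$. By~\eqref{eq:jointXY}, $G_n(b)=\PP_n([0,b]\times\{1\})$ and $b-G_n(b)=\PP_n([0,b]\times\{0\})$, and analogously for $\PP$, so both sides of the desired equivalence are encoded in the sequence $(G_n)$.

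For the forward direction, I would verify~\eqref{eq:defdconv} for every bounded continuous $h:\Omega\to\RR$. Writing $h_\eps=h(\cdot,\eps)\in C([0,1])$ for $\eps\in\{0,1\}$, a direct computation yields
\[
\int_\Omega h\diff\PP_n-\int_\Omega h\diff\PP=\int_0^1(h_1-h_0)(f_n-f)\diff x.
\]
Given $\eta>0$, uniform continuity on the compact interval $[0,1]$ lets me approximate $h_1-h_0$ to within $\eta$ in sup norm by a step function $s$ with finitely many jumps. Then $\int_0^1 s\,(f_n-f)\diff x$ is a finite linear combination of terms $\int_I(f_n-f)\diff x$ with $I$ an interval, each bounded in absolute value by $d_\Box(f_n,f)$. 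Letting $n\to\infty$ and then $\eta\to 0$ delivers~\eqref{eq:defdconv}.

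For the reverse direction, I would apply the Portmanteau theorem. Under the prescribed $L_\infty$ metric on $\Omega$, each sheet $[0,1]\times\{\eps\}$ is clopen, so the topological boundary of $[0,b]\times\{1\}$ inside $\Omega$ is contained in $\{b\}\times\{1\}$, which is $\PP$-null since $\PP$ is absolutely continuous on each sheet. Portmanteau therefore gives $G_n(b)\to G(b)$ pointwise for every $b\in[0,1]$. The functions $G_n$ are nondecreasing and $G$ is $1$-Lipschitz (hence continuous), so by P\'olya's uniform convergence theorem the pointwise convergence upgrades to uniform convergence on $[0,1]$. Invoking~\eqref{bound:distribution} once more closes the argument.

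The main obstacle I anticipate is precisely this uniform upgrade: weak convergence only controls $\int_I(f_n-f)\diff x$ one interval $I$ at a time, whereas $d_\Box$ demands a single rate valid simultaneously over all intervals. P\'olya's theorem is what bridges this gap, by exploiting both the monotonicity of the $G_n$ and the continuity of the limit $G$; without these two features the implication would fail in general. The forward direction, by contrast, is essentially a routine uniform-approximation exercise.
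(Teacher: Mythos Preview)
Your proposal is correct. The overall architecture matches the paper's: both reduce the problem to the antiderivatives $G_n,G$ (the paper's $F_n(\cdot,1),F(\cdot,1)$), both obtain pointwise convergence of the distribution functions from weak convergence, and both upgrade pointwise to uniform via the same mechanism---your invocation of P\'olya's theorem is exactly the paper's explicit $i_x/k$ argument, which works because $G$ is $1$-Lipschitz. The one genuine difference is your forward direction: rather than routing through $\|F_n-F\|_\infty\to 0$ and then the standard CDF characterization of weak convergence (as the paper does), you compute $\int_\Omega h\diff\PP_n-\int_\Omega h\diff\PP=\int_0^1(h_1-h_0)(f_n-f)\diff x$ directly and use a step-function approximation of $h_1-h_0$. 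This is a bit more economical, since it avoids passing through the intermediate equivalence with pointwise CDF convergence; the paper's route, on the other hand, makes the chain of equivalences fully symmetric and reusable.
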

\begin{proof}
  Let $(X_n,Y_n)$ be an $f_{n}$-random letter (resp. $(X,Y)$ be an $f$-random
  letter)  with the associated probability measure
  $\PP_n$ and distribution $F_n$ (resp. $\PP$
  and $F$).
  Recall that $\Omega=[0,1]\times \{0,1\}$, {and  for $(x,\eps)\in \Omega$ let $F_n(x,\eps)=\int_0^xf_n^{\eps}(t)\diff t$ and $F(x,\eps)=\int_0^xf^{\eps}(t)\diff t$.}
  Since,
  \[ \|F_n-F\|_\infty=\sup_{(x,\eps)\in\Omega} |F_n(x,\eps)-F(x,\eps)|\] 
  it follows that
  \[\|F_n-F\|_{\infty}=\sup_{x\in[0,1]}|F_n(x,0)-F(x,0)|=\sup_{x\in[0,1]}|F_n(x,1)-F(x,1)|=\sup_{x\in [0,1]}\Big|\int_{0}^x(f_n-f)(t)\diff t\Big|.\]
 Now observe that
    \begin{equation}\label{eq:Fn}\|F_n-F\|_\infty\leq d_\Box(f_n,f)\leq 2\|F_n-F\|_\infty,\end{equation}
   {where the first inequality is obvious and the second one 
      follows because for all $\eps\in\{0,1\}$ and
      $0\leq a<b\leq 1$ it holds that      $\int_{[a,b]}(f_n-f)(t)\diff t=(F_n-F)(b,\eps)-(F_n-F)(a,\eps)$.}
  Thus, $f_n\overset{\Box}{\to} f$ if and only if
  $\lim_{n\to\infty}\|F_n-F\|_\infty=0$ which we claim holds if and only if
  \begin{equation}\label{eq:Fepsinf}
  \lim_{n\to\infty}F_{n}(x,\eps) = F(x,\eps)\qquad  \text{for all } \eps\in\{0,1\}\text{ and } x\in [0,1].
  \end{equation}
  Indeed, it is clear that $\lim_{n\to\infty}\|F_n-F\|_\infty=0$
  implies~\eqref{eq:Fepsinf}. 
  For the converse note that for each $\eps\in\{0,1\}$ we have
  $|f^\eps|\leq1$, thus for every $x,y\in[0,1]$
  \begin{equation}\label{eq:Fx-Fy}
    |F(x,\eps)-F(y,\eps)|
    =\left|\int_0^xf^\eps(t)\diff t-\int_0^yf^\eps(t)\diff t\right|\leq  |x-y|.
  \end{equation}
  Given an integer $k>0$, by~\eqref{eq:Fepsinf}, there is an $n_k$  such that
  $\max_{i\in [k]}\left|F_{n}\left(\frac ik,\eps\right)-F\left(\frac ik,\eps\right)\right|<\frac 1k$   for each $n>n_k$.
For an $x\in[0,1]$ let $i_x\in[k]$ be such that $|x- \frac{i_x}k|\leq\frac 1k$. 
Then, by triangle inequality and~\eqref{eq:Fx-Fy}, for any $x\in[0,1]$
\begin{align*}
\textstyle\left|F_{n}\left(x,\eps\right)-F\left(x,\eps\right)\right|\leq \left|F_{n}\left( \tfrac {i_x}k,\eps\right)-F\left(\tfrac {i_x}k,\eps\right)\right|+2|x-\tfrac {i_x}k|\leq \tfrac 3k
\end{align*}
which thus establishes  that~\eqref{eq:Fepsinf} implies $\lim_{n\to\infty}\|F_n-F\|_\infty=0$.

To prove the lemma we now show  that~\eqref{eq:Fepsinf} holds  if and only if $(X_1,Y_1)$, $(X_2,Y_2),\dots$ converges to $(X,Y)$ in distribution, i.e.,
$\PP_1, \PP_2,\dots$ weakly converges to~$\PP$ as defined in~\eqref{eq:defdconv}.
For an  $h:\Omega\to \RR$ and an $\eps\in\{0,1\}$ define the projection $h_\eps: [0,1]\to\RR$ via $h_\eps(x)=h(x,\eps)$. 
Thus, $F_\eps(x)=F(x,\eps)$, $F_{n,\eps}(x)=F_n(x,\eps)$ and we also define $\PP_\eps$ via
$\PP_\eps(A)=\PP(A\times\{\eps\})$ for any $A\in\cB([0,1])$ and in the same manner define~$\PP_{n,\eps}$.

For a metric space $(M,d)$, we denote by $C(M)$ the set of continuous functions $h:M\to\RR$.
As $\Omega$ is equipped with $L_\infty$-distance $d_\Omega$ we have 
$d_\Omega((x,\alpha),(y,\beta))= \delta<1$  if an only if $\alpha=\beta$ and $|x-y|=\delta$. Hence,  $h\in C(\Omega)$ 
if and only if $h_0,h_1\in C([0,1])$.
Moreover, by verifying the following for step functions~$h$ and then extending to all $h\in C(\Omega)$ by a standard limiting argument we have
 \[\int_\Omega h\diff\PP_n=\sum_\eps\int_{[0,1]}h_\eps\diff\PP_{n,\eps}\qquad\text{and }\qquad \int_\Omega h\diff\PP=\sum_\eps\int_{[0,1]}h_\eps\diff\PP_\eps.\] 
In particular,  
\[\lim_{n\to\infty}\int_\Omega h\diff\PP_n=\int_\Omega h\diff\PP\quad\text{for all $h\in C(\Omega)$}\] 
holds if and only if 
\[\lim_{n\to\infty}\int_\Omega h\diff\PP_{n,\eps}=\int_\Omega h\diff\PP_\eps \quad \text{for all $\eps\in\{0,1\}$, and all $h\in C([0,1])$}.\]
 In other words, $\PP_1,\PP_2,\dots$  converges weakly to $\PP$ if and only if  $\PP_{1,\eps},\PP_{2,\eps},\dots$  converges weakly to $\PP_\eps$ for all $\eps\in\{0,1\}$.
 As the underlying space is $[0,1]$ it is 
 well known that weak convergence of $\PP_{1,\eps},\PP_{2,\eps},\dots$  to
 $\PP_\eps$ is equivalent to the fact  that $\lim_{n\to\infty}F_{n,\eps}(x) = F_\eps(x)$ holds for all $x$ where $F_\eps(x)$ is continuous. As seen from~\eqref{eq:Fx-Fy}, $F_\eps$ is continuous on the entirety of  $[0,1]$. 
This thus shows that weak convergence of $\PP_1,\PP_2,\dots$ to $\PP$ is equivalent to~\eqref{eq:Fepsinf} and the lemma follows.
\end{proof}

 The compactness of  $(\cW,d_\Box)$ now follows
 from  Lemma~\ref{lem:boxconvdconv}  and classical results from measure theory, namely Prokhorov's theorem concerning the existence of weak convergent subsequences for a given 
sequence of measures over compact measurable spaces and Radon--Nikodym theorem concerning the existence of derivatives of  measures  which are absolutely continuous
 with respect to the Lebesgue measure.
\begin{theorem}\label{cor:compact}
The metric  space $(\cW,d_{\Box})$ is compact.
\end{theorem}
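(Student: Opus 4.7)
The plan is to prove sequential compactness of $(\cW, d_\Box)$ by lifting the problem to the space of probability measures on $\Omega = [0,1] \times \{0,1\}$, where weak compactness tools are available, and then descend back to $\cW$ using Radon--Nikodym.

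First, I would take an arbitrary sequence $(f_n)_{n\in\NN}$ in $\cW$ and associate to each $f_n$ the probability measure $\PP_n$ on $\Omega$ given by~\eqref{eq:jointXY}, i.e., the law of an $f_n$-random letter. Because $\Omega$ is a compact metric space, the family $\{\PP_n\}$ is automatically tight, so Prokhorov's theorem yields a subsequence $(\PP_{n_k})$ converging weakly to some Borel probability measure $\PP$ on $\Omega$. I then need to argue that $\PP$ itself is of the form~\eqref{eq:jointXY} for some $f \in \cW$; once that is done, Lemma~\ref{lem:boxconvdconv} immediately gives $f_{n_k}\overset{\Box}{\to} f$.

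For the identification of $\PP$, the main step is to recognize that each $\PP_n$ has first marginal equal to the Lebesgue measure~$\lambda$ on $[0,1]$ (because $X_n \sim \rU([0,1])$), and that weak convergence preserves marginals, so the first marginal of $\PP$ is also $\lambda$. Decomposing $\PP = \mu_0 + \mu_1$ with $\mu_\eps(A) = \PP(A \times \{\eps\})$ for $A \in \cB([0,1])$, we have $\mu_0 + \mu_1 = \lambda$, which forces $\mu_0, \mu_1 \ll \lambda$. Radon--Nikodym then produces densities $g_0, g_1 : [0,1] \to [0,1]$ with $g_0 + g_1 = 1$ almost everywhere. Setting $f := g_1 \in \cW$, one checks that $\PP(A \times \{\eps\}) = \int_A f^\eps(x)\diff x$, so $\PP$ is exactly the distribution of an $f$-random letter.

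The main obstacle I anticipate is precisely the identification step: Prokhorov only guarantees that the limit measure $\PP$ is some probability measure on $\Omega$, and a priori it need not split as an integral against a density. The resolution is the combination of two facts used above: (i) marginals are preserved under weak convergence, which constrains the first marginal of $\PP$ to be Lebesgue; and (ii) absolute continuity of the $\mu_\eps$ with respect to $\lambda$ allows Radon--Nikodym to produce the function $f$. One minor but necessary sanity check is that $0 \leq g_\eps \leq 1$ almost everywhere, which follows from $\mu_\eps \leq \lambda$ on every Borel set. With that verification in hand, the compactness of $(\cW, d_\Box)$ follows from the extraction of a weakly convergent subsequence and the equivalence between $d_\Box$-convergence and convergence in distribution provided by Lemma~\ref{lem:boxconvdconv}.
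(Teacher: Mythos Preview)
Your proposal is correct and follows essentially the same approach as the paper's main proof: pass to the associated probability measures on the compact space $\Omega=[0,1]\times\{0,1\}$, extract a weakly convergent subsequence via Prokhorov, identify the limit measure as coming from some $f\in\cW$ via marginal preservation (Fact~\ref{fact:jointmarginal}) and Radon--Nikodym, and conclude using Lemma~\ref{lem:boxconvdconv}. The only cosmetic difference is that the paper works with the single restriction $\mu(A)=\PP(A\times\{1\})$ rather than the pair $(\mu_0,\mu_1)$, but the arguments are otherwise identical.
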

\begin{proof}
Given a sequence $(f_n)_{n\to\infty}$ of functions $f_n\in\cW$. Consider the sequence of $f_n$-random letters $\big((X_n,Y_n)\big)_{n\to\infty}$
with the corresponding sequence of probabilities $(\PP_n)_{n\to\infty}$ on
$(\Omega,\cB)$  defined by~\eqref{eq:jointXY}. As~$\Omega$ is compact we conclude from Prokhorov's theorem (see Chapter~1, Section 5 of~\cite{billing})
 that there is a pair of random variables $(X,Y)$  
 with joint probability measure $\PP$ such that $(\PP_n)_{n\to\infty}$ contains a  subsequence $(\PP_{n_i})_{i\to\infty}$ which  weakly converges to~$\PP$. 
By Fact~\ref{fact:jointmarginal} we know that $X\sim\mathrm{U}[0,1]$ while $Y$ is Bernoulli. Denoting by $\lambda$ the Lebesgue measure, the restriction of $\PP$ to $Y=1$ 
yields a measure $\mu$ which satisfies  
$\mu(A)=\PP(X\in A,Y=1)\leq {\PP(X\in A)=} \lambda(A)$ for every measurable set $A$. In particular, $\mu$  is absolutely continuous with respect to the Lebesgue measure $\lambda$ (i.e.,  $\mu(A)=0$ whenever 
$\lambda(A)=0$) and
the Radon--Nikodym theorem guarantees the existence of a function $f$ such that \[\mu([0,x])=\int_0^x f(t)\diff t=\PP(X\leq x, Y=1)\] and
thus
\[\PP(X\leq x, Y=0)=x-\mu([0,x])=\int_0^x (1-f(t))\diff t.\]
In other words, $f_{X,Y}(x,\eps)=f^\eps(x)$ is the pmf of $(X,Y)$ and we thus have $f_{n_i}\overset{\mathrm{d}}{\to} f$. Lemma~\ref{lem:boxconvdconv} guarantees that $f_{n_i}\overset{\Box}{\to} f$ as well.
Lastly, it is easily seen that $f(x)\in[0,1]$ almost everywhere and we may therefore assume that $f\in\cW$.
\end{proof}
The last theorem thus establishes the existence of the limit object claimed
in the first part of Theorem~\ref{thm:limits}. 

\subsection{Random words from limits}
To establish the second part of 
Theorem~\ref{thm:limits} we consider, for any
$f\in\cW$, a suitable sequence of random words arising from $f$ and show that it converges to $f$ almost surely. 
For $f\in\cW$ and $\bx=(x_1,...,x_{\ell})\in [0,1]^{\ell}$
  such that $x_1<x_2<...<x_\ell$
  let $\bw=\sub(\bx,f)$ be the word obtained by choosing
  $w_i=1$ with probability $f(x_i)$ and $w_i=0$ with probability
  $1-f(x_i)$ (making independent decisions for different $i$'s).
Consider now $n$ independent $f$-random
letters $(X_1,Y_1), \dots, (X_n,Y_n)$.
After reordering the first coordinate, i.e., taking a permutation
$\sigma:[n]\to[n]$ so that $X_{\sigma(1)}<\dots< X_{\sigma(n)}$,
the \emph{$f$-random word} $\sub(n,f)$ is given by
\[
\sub(n,f)=(Y_{\sigma(1)},\dots, Y_{\sigma(n)}).
\]

\begin{lemma}\label{lem:randomwords}
  Let  $f\in \cW$ and let $f_n$ be the function associated to the $f$-random
  word $\sub(n,f)$.
  For all $n\in\mathbb N$ and $a\ge\frac 1n$ we have
  \[
  \PP\big(d_\Box(f_n,f)\ge{8a}\big)\le 4ne^{-2a^2n}.
  \]
\end{lemma}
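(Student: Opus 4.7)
The plan is to bound $d_\Box(f_n,f)$ by controlling $\sup_{b\in[0,1]}|\int_0^b(f_n-f)(x)\diff x|$ via~\eqref{bound:distribution}, then to rewrite this quantity in terms of independent $f$-random letters so that Hoeffding's inequality applies, and finally to combine a Lipschitz-plus-grid argument with a union bound to handle the supremum over $b$.

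First, I would fix $b\in[0,1]$ and relate $\int_0^b f_n$ to a sum of independent Bernoullis. Let $(X_i,Y_i)$, $i\in[n]$, denote the $f$-random letters defining $\sub(n,f)$, let $\sigma$ be their sorting permutation, and let $S(b)=\{i\in[n]\colon X_i\leq b\}$. The key observation is that $X_{\sigma(k)}\leq b$ if and only if $k\leq|S(b)|$, so
\[
\frac{1}{n}\sum_{i=1}^{n}\mathbf{1}_{X_i\leq b,\,Y_i=1}=\frac{1}{n}\sum_{k=1}^{|S(b)|}Y_{\sigma(k)}=\int_{0}^{|S(b)|/n}f_n(x)\diff x.
\]
Combined with the crude bound $\bigl|\int_{|S(b)|/n}^{b}f_n\bigr|\leq |b-|S(b)|/n|$ (since $0\leq f_n\leq 1$), this gives
\[
\Big|\int_0^b (f_n-f)\diff x\Big|\leq |H(b)|+\big||S(b)|/n-b\big|,
\]
where $H(b)=\tfrac1n\sum_{i=1}^{n}\mathbf{1}_{X_i\leq b,\,Y_i=1}-\int_0^b f(t)\diff t$.

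The right-hand side is a sum of two concentrated quantities. The indicators $\mathbf{1}_{X_i\leq b,\,Y_i=1}$ are i.i.d.\ Bernoulli with mean $\int_0^b f$ (by~\eqref{eq:jointXY}), and $|S(b)|$ is Binomial$(n,b)$, so Hoeffding's inequality yields $\PP(|H(b)|\geq a)\leq 2e^{-2a^2n}$ and $\PP(||S(b)|/n-b|\geq a)\leq 2e^{-2a^2n}$. Hence, for each fixed $b$,
\[
\PP\Big(\Big|\int_0^b(f_n-f)\diff x\Big|\geq 2a\Big)\leq 4e^{-2a^2n}.
\]

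To pass from pointwise to uniform control of $b$, I would use that $G_n(b):=\int_0^b(f_n-f)\diff x$ is $2$-Lipschitz (since $|f_n|,|f|\leq 1$). Taking the grid $\{k/n\colon k\in[n]\}$, every $b\in[0,1]$ is within distance $1/n$ of a grid point, so $\sup_b|G_n(b)|\leq\max_{k\in[n]}|G_n(k/n)|+2/n$. A union bound over the $n$ grid points gives
\[
\PP\big(\sup_{b\in[0,1]}|G_n(b)|\geq 2a+2/n\big)\leq 4ne^{-2a^2n}.
\]
Using $a\geq 1/n$, the threshold is at most $4a$, and by~\eqref{bound:distribution} we have $d_\Box(f_n,f)\leq 2\sup_b|G_n(b)|\leq 8a$, yielding the claim.

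The main obstacle is the bookkeeping around the sorting step: $f_n$ is defined via a permutation $\sigma$ of the sample, so $\int_0^b f_n$ is not immediately a sum of i.i.d.\ indicators. The rank identity $X_{\sigma(k)}\leq b\iff k\leq|S(b)|$ circumvents this by producing an exact identity modulo a small boundary error that is itself a concentrated binomial deviation; the rest is a standard Lipschitz grid-plus-union-bound argument.
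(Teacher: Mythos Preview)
Your proposal is correct and follows essentially the same approach as the paper: reduce $d_\Box$ to $\sup_b|\int_0^b(f_n-f)|$ via~\eqref{bound:distribution}, rewrite the partial integral of $f_n$ using the rank identity for the sorted sample, control the two resulting deviations (your $H(b)$ and $|S(b)|/n-b$, the paper's $S_i-W(i/n)$ and $Z_i-i/n$) by Hoeffding/Chernoff, and finish with a Lipschitz grid argument plus union bound. The only cosmetic difference is the order of operations---the paper restricts to the grid $\{i/n\}$ first and then applies concentration, whereas you prove the pointwise bound for general $b$ and then pass to the grid---but the estimates and constants match exactly.
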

\begin{proof}

For $x\in [0,1]$ let
  \[
  W_n(x)=\int_0^xf_n(t)\diff t\hspace{.5cm}\mbox{and}\hspace{.5cm}W(x)=\int_0^xf(t)\diff t.
  \]
  Recall that by~\eqref{bound:distribution} we have 
 $d_\Box(f_n,f)\le 2\|W_n-W\|_\infty$. Therefore, we only need to bound $\PP(\|W_n-W\|_{\infty}\ge 5a)$. 
 
 Given $i\in[n]$ and  $x\in[\tfrac {i-1}n,\tfrac{i}{n})$, {since $|f_n|, |f|\leq 1$}, we have that $|W_n(x)-W(x)|\le |W_n(\tfrac{i}n)-W(\tfrac{i}{n})|+\frac 2n$, and thus 
\[\|W_n-W\|_\infty \le \frac 2n+\max_{i\in[n]}|W_n(\tfrac{i}n)-W(\tfrac{i}{n})|. \]
For $i\in[n]$, we next bound the probability that $|W_n(\tfrac in)-W(\tfrac in)|$ is at least {$2a$}. Consider the sequence $(X_1,Y_1),\dots,(X_n,Y_n)$ of $f$-random letters that define $\sub(n,f)$, and suppose that $X_{\sigma(1)}<\dots< X_{\sigma(n)}$ for some permutation $\sigma:[n]\to[n]$. Since $f_n$ is the function associated to $\sub(n,f)$ we have
\[W_n(\tfrac in)=\frac1n\sum_{j=1}^iY_{\sigma(j)}
\]
and thus, letting $Z_i=\frac 1n\sum_{j=1}^n\vecone\{X_j\le \tfrac in\}$ and
$S_i=\frac1n\sum_{j=1}^n{Y_j}\vecone\{X_j\le \tfrac in\}{=\frac1n\sum_{j=1}^{{n}Z_i} Y_{\sigma(j)}}$, we get
\begin{equation}\label{eq:onesuptoi}
\Big|W_n(\tfrac in)-S_i\Big|\le \Big|\frac in-Z_i\Big|.
\end{equation}
On the other hand, for every $j\in[n]$ we have that
\[
\EE(Y_j\vecone\{X_j\le \tfrac in\})=\int_0^{\frac{i}{n}}f(t)\diff t=W(\tfrac in),
\]
so $\EE(S_i)=W(\frac in)$.
Using Chernoff's bound (see Theorem~2.8 and Remark 2.5 from~\cite{Randomgraphs}) we get
\[
\PP(\big|Z_i-\tfrac in\big|\ge a)\le 2e^{-2a^2n}\hspace{.5cm}\text{and}\hspace{.5cm}\PP(\big|S_i-W(\tfrac in)\big|\ge a)\le 2e^{-2a^2n},
\]
which together with~\eqref{eq:onesuptoi} and the fact that $a\geq \frac{1}{n}$, implies that
\[
\PP(|W_n(\tfrac in)-W(\tfrac in)|\ge 2a)\le \PP(|S_i-W(\tfrac in))|\ge a)+
\PP(\big|Z_i-\tfrac in\big|\ge a)
\le 4e^{-2a^2n}.
\]
Putting everything together we conclude that 
\[
  \PP(d_\Box(f_n,f)\ge 8a) \le \PP(\|W_n-W\|_\infty\ge 4a)
	\le \sum_{i=1}^n\PP(|W_n(\tfrac in)-W(\tfrac in)|\ge 2a)\\
	\le 4ne^{-2a^2n}.
\]
\end{proof}
As an immediate consequence we obtain the following.  
\begin{corollary}\label{cor:randomwords}
  For all $f\in\cW$, the sequence of $f$-random words
  $(\sub(n,f))_{n\to\infty}$ converges to $f$ a.s.
\end{corollary}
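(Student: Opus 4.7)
The plan is to derive Corollary~\ref{cor:randomwords} from Lemma~\ref{lem:randomwords} by a standard Borel--Cantelli argument, and then translate $d_\Box$-convergence of the associated functions into convergence of the subsequence densities.

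First I would choose a sequence $a_n$ that decays fast enough so that $\PP(d_\Box(f_n,f)\ge 8a_n)$ is summable but slowly enough that $a_n\ge 1/n$ for large $n$. A convenient choice is $a_n=\sqrt{2\log n/n}$, since then Lemma~\ref{lem:randomwords} gives
\[
\PP\big(d_\Box(f_n,f)\ge 8a_n\big)\le 4n e^{-2a_n^2 n}=\frac{4}{n^{3}},
\]
and $\sum_n 4/n^{3}<\infty$. By the first Borel--Cantelli lemma, almost surely only finitely many of the events $\{d_\Box(f_n,f)\ge 8a_n\}$ occur, so $d_\Box(f_n,f)\to 0$ almost surely, where $f_n$ denotes the function associated to the $f$-random word $\sub(n,f)$.

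Next I would transfer this $d_\Box$-convergence into convergence of subsequence densities. For each fixed word $\bu\in\{0,1\}^\ell$, Lemma~\ref{lem:tcount} yields
\[
|t(\bu,f_n)-t(\bu,f)|\le \ell^2\,d_\Box(f_n,f),
\]
which together with~\eqref{eq:countcount} gives $t(\bu,\sub(n,f))=t(\bu,f_n)+O(1/n)\to t(\bu,f)$ on the almost sure event where $d_\Box(f_n,f)\to 0$. Because the set of finite words over $\{0,1\}$ is countable, the exceptional null set on which convergence could fail for some~$\bu$ is a countable union of null sets, hence still null; therefore $(\sub(n,f))_{n\to\infty}$ converges to~$f$ almost surely.

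There is no real obstacle here: the content of the corollary is essentially a quantitative-to-qualitative upgrade of Lemma~\ref{lem:randomwords}, and the only subtle point is the need to handle countably many words simultaneously, which is dispatched by taking a countable union of null events. Any summable tail bound would work just as well as the explicit choice $a_n=\sqrt{2\log n/n}$; for instance $a_n=n^{-1/3}$ also satisfies both $a_n\ge 1/n$ (for $n\ge 2$) and $\sum_n 4ne^{-2n^{1/3}}<\infty$.
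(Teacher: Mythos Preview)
Your argument is correct and follows essentially the same route as the paper's proof: apply Lemma~\ref{lem:randomwords} with a summable choice of $a_n$ (the paper uses $a_n=n^{-1/4}$), invoke Borel--Cantelli to get $f_n\overset{\Box}{\to}f$ almost surely, then use Lemma~\ref{lem:tcount} and~\eqref{eq:countcount} to pass to $t$-convergence. Your explicit remark about the countable union of null sets is a nice clarification that the paper leaves implicit.
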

\begin{proof}
For $n\in\NN$ let $f_n=\sub(n,f)$.
Taking $a=n^{-\frac 14}$ in Lemma~\ref{lem:randomwords} and using the Borel--Cantelli lemma, it follows that $f_n\overset{\Box}{\to}f$ almost surely.
Then, by Lemma~\ref{lem:tcount} we conclude that
  $f_n\overset{t}{\to}f$ almost surely, and therefore,
  by~\eqref{eq:countcount}, $(\sub(n,f))_{n\to\infty}$ converges to $f$ almost
  surely.
\end{proof}

Equipped with the results from above we now establish the second main result of this section.
\begin{proof}[Proof (of Theorem~\ref{thm:limits})]
The uniqueness of the limit, if it exists, follows from Lemma~\ref{lem:f=g}. The second part of the theorem concerning the existence of  word sequences  converging to any given $f\in\cW$ 
follows from Corollary~\ref{cor:randomwords}.

It is thus left to  establish the existence of a limit. 
Consider a convergent sequence $(\bw_n)_{n\to\infty}$ of words and let $(f_n)_{n\to\infty}$ be the sequence of associated functions $f_n=f_{\bw_n}\in\cW$. Because of~\eqref{eq:countcount} the sequence $(f_n)_{n\to\infty}$ is $t$-convergent and thus,  by Proposition~\ref{prop:tconvboxconv}, $(f_n)_{n\to\infty}$ is a Cauchy sequence with respect to $d_\Box$. The compactness of $(\cW,d_\Box)$, as guaranteed by Theorem~\ref{cor:compact}, implies that
there exists $f\in\cW$ such that $d_\Box(f_n,f)\to 0$.
Finally, because of Lemma~\ref{lem:tcount} we have that $f_{n}\overset{t}{\to}f$ and therefore $(\bw_n)_{n\to\infty}$ converges to~$f$.
\end{proof}
Concluding this section, and in preparation for the next one, we show that a tail bound on $d_{\Box}(f_{\bu},f_{\bw})$ similar to the one
of Lemma~\ref{lem:randomwords}
holds if instead of sampling an $f_{\bw}$-random word for some
word $\bw$, we sample a subsequence $\bu=\sub(\ell,\bw)$.

  \begin{lemma}\label{lem:combRandomWords}
   For all $\eps>0$ there is an $\ell_0$ such that for any $\bw\in\{0,1\}^n$ and $n\geq \ell\geq \ell_0$    
   the random word $\bu=\sub(\ell,\bw)$ satisfies
    \[
    \PP\big(d_{\Box}(f_{\bu},f_{\bw})\geq \eps\big) \leq 4\ell e^{-\eps^2\ell/300}.
    \]
\end{lemma}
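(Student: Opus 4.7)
The plan is to follow the proof of Lemma~\ref{lem:randomwords} as closely as possible, the key difference being that sampling $\bu=\sub(\ell,\bw)$ corresponds to drawing an $\ell$-subset of $[n]$ without replacement, so the Chernoff bounds used there must be replaced by Hoeffding's inequality for sampling without replacement. First I would use~\eqref{bound:distribution} to reduce the problem to bounding $\sup_{b\in[0,1]}|W_\ell(b)-W(b)|$, where $W(b)=\int_0^b f_{\bw}(t)\diff t$ and $W_\ell(b)=\int_0^b f_{\bu}(t)\diff t$. Since $f_\bu$ and $f_\bw$ take values in $[0,1]$, both $W$ and $W_\ell$ are $1$-Lipschitz, so up to an additive $O(1/\ell)$ discretisation error it suffices to control $|W_\ell(b)-W(b)|$ at the $\ell+1$ equally spaced points $b\in\{i/\ell\,:\,0\le i\le\ell\}$.

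For a fixed such grid point, writing $I=\{i_1<\dots<i_\ell\}\subseteq[n]$ for the random index set defining $\bu$ and letting $k=\lfloor bn\rfloor$ and $a_k=\sum_{j\le k}w_j$, I would introduce the analogues of the proxies $Z_i,S_i$ from the proof of Lemma~\ref{lem:randomwords}, namely
\[
Z_b=\tfrac1\ell|I\cap[k]|\qquad\text{and}\qquad S_b=\tfrac1\ell|\{j\in I\cap[k]\,:\,w_j=1\}|.
\]
By definition of $f_\bu$ one has $W_\ell(Z_b)=S_b$, so the $1$-Lipschitz property of $W_\ell$ combined with $|W(b)-a_k/n|\le 1/n$ yields
\[
|W_\ell(b)-W(b)|\le|b-Z_b|+|S_b-W(b)|\le\tfrac 2n+|Z_b-k/n|+|S_b-a_k/n|.
\]
Now $\ell Z_b$ and $\ell S_b$ are hypergeometric with means $\ell k/n$ and $\ell a_k/n$ respectively, so Hoeffding's inequality for sampling without replacement delivers the sub-Gaussian tails $\PP(|Z_b-k/n|\ge a)\le 2e^{-2a^2\ell}$ and the analogue for $S_b$.

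A union bound over the $\ell+1$ grid points, the choice of $a$ proportional to $\eps$, and taking $\ell\ge\ell_0(\eps)$ large enough so that the discretisation contributions $O(1/\ell)+O(1/n)$ (recall $n\ge\ell$) absorb into a small fraction of $\eps$, combined with the factor $2$ from~\eqref{bound:distribution}, then yields a bound of the required shape $4\ell e^{-\eps^2\ell/300}$, with plenty of slack in the constant. The only non-routine ingredient is invoking a Hoeffding-type tail bound for sums over a uniformly random subset of a finite set; everything else is a careful but routine adaptation of the bookkeeping carried out in the proof of Lemma~\ref{lem:randomwords}.
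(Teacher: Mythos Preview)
Your proposal is correct and follows essentially the same approach as the paper: reduce via~\eqref{bound:distribution} to a supremum over a grid of $\ell$ points, introduce the two hypergeometric proxies (the paper calls them $Z_i$ and $U_i$, corresponding to your $Z_b$ and $S_b$), apply a concentration bound for the hypergeometric distribution, and union-bound. The only cosmetic difference is that the paper invokes the Bernstein-type bound $\PP(|X-\EE X|\ge t)\le 2\exp\big(-t^2/(2(\EE X+t/3))\big)$ from~\cite{Randomgraphs} rather than Hoeffding's inequality for sampling without replacement, but either works for the stated constant.
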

  \begin{proof}
  For given $\eps>0$ we choose $\ell_0=\ell_0(\eps)$ to be sufficiently large and let  $n\geq \ell>\ell_0$. Let $J\in\binom{[n]}{\ell}$ chosen uniformly at random
  and define $\bu=\sub(J,\bw)=u_1\dots u_\ell$. Consider the intervals  $I_i=\{1,\dots,\lfloor i\cdot \frac n\ell\rfloor\}$, $i\in[\ell]$. 
  By definition of cut-distance and that of $f_{\bu}$ and $f_{\bw}$ we have
    \begin{align*}d_\Box(f_{\bu},f_{\bw})&\le 2\sup_{x\in [0,1]}\Big|\int_0^{x}(f_{\bu}-f_{\bw})(t)\diff t\Big|
    \le 2\max_{i\in[\ell]}\Big |\int_0^{\frac {|I_i|}{n}}(f_{\bu}-f_{\bw})(t)\diff t\Big|
    +\frac 2\ell\\
    &\leq 2\max_{i\in[\ell]}  \Big|\frac1\ell\sum_{j=1}^{i}u_j-\frac1n\sum_{j=1}^{|I_i|}w_j\Big |+\frac 4\ell.
    \end{align*}
Thus the lemma follows once we have shown that
\begin{align}\label{eq:ZU}\PP\Big(\Big|\sum_{j=1}^{i}u_j-\frac\ell n\sum_{j=1}^{|I_i|}w_j\Big |>\frac{\eps \ell}3 \Big)\leq 4\ell e^{-{\eps^2\ell}/{300}}\quad \text{for all $i\in[\ell]$}.\end{align}
To show~\eqref{eq:ZU} consider for an $i\in[\ell]$ \[Z_i=|J\cap I_i|\qquad\text{and}\qquad U_i=\sum_{j\in I_i}w_j\vecone\{j\in J\}=\sum_{j=1}^{Z_i} u_j.\]
  Both are hypergeometric random variables with expectations 
  \[\EE(Z_i)=\frac{\ell}n |I_i|=i\pm1 \qquad\text{and}\qquad \EE(U_i)=\frac \ell n\sum_{j=1}^{|I_i|}w_j.\]
    Moreover, $X\in\{Z_i,U_i\}$  satisfies the concentration bound  (see (2.5), (2.6) and Theorem 2.10 from~\cite{Randomgraphs})
    \begin{align*}
      \mathbb P(|X-\EE(X)|\geq t)\le 2\exp\left(-\frac{t^2}{2 (\mathbb E(X)+t/3)}\right)\qquad t\geq 0.
    \end{align*}    
Thus, for $t=\eps \ell/10$ we conclude  that with probability at least $1-4\exp(-\eps^2\ell/300)$ we have
\begin{align*}
\Big|Z_i-i \Big|\leq \frac{\eps\ell}{10}+ 1 \qquad\text{and}\qquad \Big|U_i-\frac \ell n\sum_{j=1}^{|I_i|}w_j\Big|\leq \frac{\eps\ell}{10}\qquad \text{for all $i\in[\ell]$}.
\end{align*}
Finally, for a choice $J\in\binom{[n]}\ell$ which satisfies both of these properties we have for large  $\ell_0$ 
\begin{align*}
\sum_{j=1}^{i}u_j=\sum_{j=1}^{Z_i}u_j\pm\frac{\eps \ell}9=\frac\ell n\sum_{j=1}^{|I_i|}w_j\pm  \frac{\eps \ell}3.
\end{align*}
This proves~\eqref{eq:ZU} and the lemma follows.
\end{proof}

\section{Testing hereditary word properties}\label{sec:testing}
We now turn our focus to  algorithmic considerations,
specifically, to the study of testable word properties via word limits. 
The presentation below is heavily influenced by the derivation of
analogous results for graphons by Lov\'asz and Szegedy~\cite{LStesting}
(for related results concerning testability of permutation properties and
limit objects see~\cite{jkmm11,kk14}).

Let us briefly discuss the approach and the organization of this section.
While the distance of choice for word limit is the interval-metric $d_{\Box}$, property testing inherently deals
with~the normalized Hamming metric (between  words $\bw\in\{0,1\}^n$ and a word property $\cP$) which we recall to be
\begin{align}\label{eq:d1} d_1(\bw,\cP)=\min_{\bu\in\cP\cap\{0,1\}^n}d_1(\bw,\bu)\qquad\text{where}\qquad   d_1(\bw,\bu)=\frac 1n\sum_{i\in [n]}|w_i-u_i|.\end{align}
Being able to relate these two metrics by means of the limit theory is the essence of our approach and formally this is done via the notion of
\emph{closure}~$\overline{\cP}$ of a word property $\cP$, defined as
\[
  \overline{\cP} =\{ f\in\cW \colon\,{\text{There is a sequence $(\bw_n)_{n\to\infty}$ in $\cP$ which converges to $f$}} \}.
\]
Note that $\overline\cP$ may not contain $f_{\bw}$ for a $\bw\in\cP$. However,  compactness of $(\cW,d_\Box)$  immediately implies that 
$f_{\bw}$ is close to~$\overline\cP$ in the interval-metric
if  $\bw\in\cP$ is large enough (see Lemma~\ref{lem:inclosure}). 
Alternative characterizations of $\overline\cP$ for a hereditary word property $\cP$ will be given in Proposition~\ref{prop:threeNine} which moreover shows
that non-trivial hereditary $\cP$ admits only  $0$-$1$ valued $f\in\overline\cP$ (up to a null measure set).

The  $d_1$-metric in~\eqref{eq:d1} can be analogously defined for functions $f$ and $\overline\cP$ as follows (as usual  let $d_1(f,\overline\cP)=\infty$ if $\overline\cP=\emptyset$):
 \[d_1(f,\overline{\cP})=\inf_{g\in\overline{\cP}}d_1(f,g)\qquad\text{where}\qquad
 d_1(f,g)=\|f-g\|_1=\int |f(x)-g(x)|\diff x.\]
With respect to the closure $\overline\cP$ of a hereditary word property $\cP$  Lemma~\ref{lem:threeThirty} will allow us to ``switch'' from convergence in $d_{\Box}$ to  convergence in $d_1$, while
Lemma~\ref{prop:threeThirteen} shows  $d_1(\bw,\cP)\leq d_1(f_{\bw},\overline{\cP})$, thus allowing to pass from~$\overline{\cP}$ back to $\cP$.
With these results at hand we then give the proof of Theorem~\ref{theo:wordTesting} at the end of this section.

We start  with the following.

\begin{lemma}\label{lem:inclosure}
For all $\delta>0$ there is an $n_0$ such that  any $\bw\in\cP$ of length $n>n_0$ satisfies
\[d_{\Box}(f_{\bw},\overline\cP)=\inf_{g\in\overline\cP}d_{\Box}(f_{\bw},g)<\delta.\]
\end{lemma}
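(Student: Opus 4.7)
The plan is a short compactness argument by contradiction. Suppose the claim fails for some $\delta>0$: then for every $n_0$ there exists $\bw\in\cP$ of length $n>n_0$ with $d_\Box(f_{\bw},\overline{\cP})\geq\delta$. Picking $n_0=1,2,3,\dots$ in turn, one extracts a sequence $(\bw_k)_{k\to\infty}$ with $\bw_k\in\cP$, $|\bw_k|\to\infty$, and $d_\Box(f_{\bw_k},\overline{\cP})\geq\delta$ for every $k$.

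By the compactness of $(\cW,d_\Box)$ established in Theorem~\ref{cor:compact}, the sequence of associated functions $(f_{\bw_k})$ admits a subsequence $(f_{\bw_{k_j}})$ converging in $d_\Box$ to some $f\in\cW$. Lemma~\ref{lem:tcount} then gives $|t(\bu,f_{\bw_{k_j}})-t(\bu,f)|\leq\ell^2\,d_\Box(f_{\bw_{k_j}},f)\to 0$ for every fixed word $\bu$ of length $\ell$, and combining this with~\eqref{eq:countcount} and the fact that $|\bw_{k_j}|\to\infty$ yields $t(\bu,\bw_{k_j})\to t(\bu,f)$ for every finite $\bu$. In other words, $(\bw_{k_j})_{j\to\infty}$ converges to $f$ as a word sequence, so by the very definition of $\overline{\cP}$ we conclude $f\in\overline{\cP}$.

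This is incompatible with the standing assumption: since $d_\Box(f_{\bw_{k_j}},f)\to 0$ and $f\in\overline{\cP}$, we have $d_\Box(f_{\bw_{k_j}},\overline{\cP})\leq d_\Box(f_{\bw_{k_j}},f)<\delta$ for all sufficiently large $j$, contradicting the construction of $(\bw_{k_j})$. There is no real obstacle here; the argument is essentially bookkeeping, and the only non-trivial input beyond invoking compactness is the passage from $d_\Box$-limits to limits as word sequences, which is exactly what Lemma~\ref{lem:tcount} combined with~\eqref{eq:countcount} provides.
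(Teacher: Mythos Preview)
Your proof is correct and follows essentially the same compactness-by-contradiction route as the paper. The only difference is that you spell out explicitly, via Lemma~\ref{lem:tcount} and~\eqref{eq:countcount}, the step from $d_\Box$-convergence of $(f_{\bw_{k_j}})$ to convergence of $(\bw_{k_j})$ as a word sequence, whereas the paper takes this implication for granted.
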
 
\begin{proof}
Let $\delta>0$ be given and for a contradiction
 suppose there is a sequence
    $(\bw_{n})_{n\to\infty}$ in $\cP$ such that every $\bw_n$ satisfies
    $d_{\Box}(f_{\bw_{n}},\overline{\cP})\geq\delta$.
    By compactness of $(\cW,d_\Box)$ 
    there is an $f\in\cW$ such that (by passing to a subsequence\footnote{The term ``passing to a subsequence'' means considering a subsequence instead of the original sequence. However, to avoid making the notation more cumbersome, the subsequence keeps the same name as the original sequence.})
    $f_{\bw_{n}}\overset{\Box}{\to} f$. 
    In particular, 
     $(\bw_{n})_{n\to\infty}$ is a sequence in $\cP$ which converges to $f$ and thus $f\in\overline{\cP}$.
     However, this implies that
     $d_{\Box}(f_{\bw_{n}},\overline{\cP})\leq d_{\Box}(f_{\bw_{n}},f)<\delta$
    for large enough $n$, a contradiction.
\end{proof}

Recall that a property $\cP$ is hereditary if
$\sub(I,\bw)\in\cP$ for every $\bw\in\cP$ of length $n$ and every
$I\subseteq [n]$.
\begin{proposition}\label{prop:threeNine}
  If $\cP$ is a hereditary word property, then 
  \begin{align*}
  \overline{\cP}
 = \{ f\in\cW \colon\, \PP(\sub(\ell,f)\in\cP)=1\text{ for all }\ell\geq 1\}
  = \{ f\in\cW\colon\, t(\bu,f)=0 \text{ for all } \bu\not\in\cP\}.
  \end{align*}
  Moreover, if  $\cP$ does not contain all words,
  then every $f\in\overline{\cP}$ is
  $0$-$1$ valued except maybe on a set of null measure.
\end{proposition}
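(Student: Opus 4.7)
The plan is to denote the middle and right-hand sets in the displayed formula by $B$ and $C$, respectively, and to prove the chain $\overline{\cP}\subseteq C = B\subseteq \overline{\cP}$, followed by a short contradiction argument for the moreover claim. The equality $B = C$ is essentially unwinding definitions: from the construction of the $f$-random word in Section~\ref{sec:limitTheory} one verifies that $\PP(\sub(\ell, f) = \bu) = t(\bu, f)$ for every $\bu \in \{0,1\}^\ell$, and since the $t(\bu, f)$'s are nonnegative and sum to $1$ over $\bu \in \{0,1\}^\ell$, the condition $\PP(\sub(\ell, f) \in \cP) = 1$ is equivalent to $t(\bu, f) = 0$ for every $\bu \in \{0,1\}^\ell \setminus \cP$.

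For $\overline{\cP}\subseteq C$, I would take $f \in \overline{\cP}$ witnessed by a sequence $(\bw_n)$ in $\cP$ with $|\bw_n|\to\infty$. By heredity, any $\bu \notin \cP$ cannot occur as a subsequence of any $\bw_n \in \cP$, so $t(\bu, \bw_n) = 0$ for all $n$ large enough; since $t(\bu, \bw_n) \to t(\bu, f)$ by convergence, this forces $t(\bu, f) = 0$. For $C \subseteq \overline{\cP}$, given $f \in C$, I would apply Corollary~\ref{cor:randomwords} to conclude that the $f$-random words satisfy $\sub(n, f)\to f$ almost surely. Using $\PP(\sub(n,f) = \bu) = t(\bu, f)$ and $f \in C$, one obtains $\PP(\sub(n, f) \in \cP) = \sum_{\bu\in\cP\cap\{0,1\}^n} t(\bu, f) = \sum_{\bu \in \{0,1\}^n} t(\bu, f) = 1$ for each $n$, and countable additivity gives $\PP(\sub(n, f) \in \cP \text{ for all } n) = 1$. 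Intersecting with the almost-sure convergence event exhibits (with probability one) a sequence of words in $\cP$ converging to $f$, hence $f \in \overline{\cP}$.

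For the moreover claim, I would proceed by contradiction. Assume $\cP$ does not contain all words and fix some $\bu^* \notin \cP$ of length $\ell$; suppose $f \in \overline{\cP}$ fails to be $0$-$1$ valued almost everywhere. Then there exist $\eps \in (0, 1/2)$ and a measurable set $A \subseteq [0,1]$ with $|A| > 0$ such that $\eps \leq f(x) \leq 1 - \eps$ on $A$. Using atomlessness of the Lebesgue measure and continuity of $a \mapsto |A \cap [0,a]|$, I would partition $A$ into $\ell$ disjoint pieces $A_1,\dots,A_\ell$ of equal positive measure $|A|/\ell$ such that each $A_i$ lies in an interval $[a_{i-1}, a_i]$ with $a_0 < a_1 < \dots < a_\ell$. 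The chain $x_1 < \dots < x_\ell$ is then automatic for $(x_1,\dots,x_\ell) \in A_1\times\dots\times A_\ell$, and since $f^{u_i^*}(x_i)\geq\eps$ on $A_i$, integration gives $t(\bu^*, f) \geq \ell!\,\eps^\ell (|A|/\ell)^\ell > 0$, contradicting $\bu^* \notin \cP$ together with $f \in \overline{\cP} \subseteq C$.

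The argument is essentially bookkeeping given the random-word machinery already developed; the only slightly delicate point is the ordered decomposition of $A$ in the moreover step, which rests on a standard intermediate-value argument applied to the continuous cumulative-measure function.
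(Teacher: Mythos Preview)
Your proof is correct and follows essentially the same approach as the paper: the equality $B=C$ and the inclusions $\overline{\cP}\subseteq C$ and $B\subseteq\overline{\cP}$ are argued identically, the latter via Corollary~\ref{cor:randomwords}. For the moreover claim, the paper's argument is slightly more streamlined than yours: rather than truncating to $\{\eps\le f\le 1-\eps\}$ and building an ordered decomposition of $A$, one simply restricts the integral defining $t(\bu^*,f)$ to the region $\{x_1<\dots<x_\ell\}\cap (f^{-1}(]0,1[))^\ell$, on which the integrand $\prod_i f^{u_i^*}(x_i)$ is strictly positive; since $t(\bu^*,f)=0$, this region (and hence $f^{-1}(]0,1[)$) must have measure zero. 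Your ordered-partition construction achieves the same thing but is not needed.
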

\begin{proof}
The second  equality holds since
 for each integer $\ell\geq 1$ we have
\begin{align}\label{eq:subvst}
 0= \PP(\sub(\ell,f)\not\in\cP) = \sum_{\bu\in\{0,1\}^{\ell}\setminus\cP}\PP(\sub(\ell,f)=\bu)
  =\sum_{\bu\in\{0,1\}^{\ell}\setminus \cP} t(\bu,f).
  \end{align}

To show the first equality recall from  Corollary~\ref{cor:randomwords} 
that $\big(\sub(\ell,f)\big)_{\ell\to\infty}$ converges to $f$ a.s.
Hence, if moreover $\PP(\sub(\ell,f)\in\cP)=1$ holds for every $\ell$,
then there is a sequence of words from $\cP$ which converges to $f$,
showing that $f\in\overline{\cP}$.
  For the converse, let
  $(\bw_n)_{n\to\infty}$ be a sequence of words in $\cP$ that converges to
  $f\in\overline{\cP}$, i.e., 
  $\lim_{n\to\infty}t(\bu,\bw_n)=t(\bu,f)$ for every word $\bu$.
  In particular, if  $\bu\not\in\cP$ then $t(\bu,\bw_n)=0$
  by heredity of $\cP$ and thus $t(\bu,f)=0$. By~\eqref{eq:subvst} we then obtain $\PP(\sub(\ell,f)\not\in\cP)=0$.

  Finally, suppose that $f\in\overline{\cP}$ and that there is a
  $\bu\in\{0,1\}^\ell\setminus\cP$ for some $\ell$.
  Let $\bX=(X_1,...,X_{\ell})$ be uniformly chosen in
  $[0,1]^\ell$, then the characterization of $\overline{\cP}$ and~\eqref{eq:subseqdensity} yields
  \begin{align*}
    0 =\PP(\sub(\ell,f) \not\in\cP)
    & \geq {t(\bu,f)= \ell!} \int_{\substack{x_1,...,x_\ell\in f^{-1}(]0,1[)\\ 0\leq x_1<...<x_\ell\leq 1}} \prod_{i\in [\ell]}f^{u_i}(x_i)\diff x_1...\diff x_{\ell}{.}
  \end{align*}
  Thus, {$f^{-1}(]0,1[)$ has} null Lebesgue measure.
\end{proof}


\begin{lemma}\label{prop:threeThirteen}
  If $\cP$
  is {a} hereditary word property and $\bw$ is a word, then
  $d_1(\bw,\cP)\leq d_1(f_{\bw},\overline{\cP})$.
\end{lemma}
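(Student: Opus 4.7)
The plan is to prove the inequality by exhibiting, for every $g\in\overline{\cP}$, some $\bu\in\cP\cap\{0,1\}^n$ with $d_1(\bw,\bu)\le\|f_\bw-g\|_1$, and then taking the infimum over $g$. The case $\cP=\{0,1\}^{\ast}$ is trivial since then $d_1(\bw,\cP)=0$, and if $\overline{\cP}=\emptyset$ the right-hand side is $\infty$ by convention; so I may assume $\cP\subsetneq\{0,1\}^{\ast}$ and $\overline{\cP}\ne\emptyset$, in which case Proposition~\ref{prop:threeNine} ensures that every $g\in\overline{\cP}$ is $\{0,1\}$-valued almost everywhere.

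First I would fix such a $g$ and build a random word $\bu=(u_1,\dots,u_n)$ by drawing $Z_1,\dots,Z_n$ independently with $Z_i\sim\rU(I_i)$, where $I_i:=((i-1)/n,i/n]$, and setting $u_i=g(Z_i)$; each $u_i$ lies in $\{0,1\}$ almost surely. Using that $Z_i$ has density $n$ on $I_i$ and that $f_\bw(x)=w_i$ on $I_i$, a direct Fubini computation yields
\begin{align*}
\EE[d_1(\bw,\bu)]=\frac1n\sum_{i=1}^n n\int_{I_i}|w_i-g(x)|\diff x=\int_0^1|f_\bw(x)-g(x)|\diff x=\|f_\bw-g\|_1.
\end{align*}

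The main obstacle is to show that $\bu\in\cP$ almost surely. For this I would compare $\bu$ with the $g$-random word $\sub(n,g)=(g(X_{(1)}),\dots,g(X_{(n)}))$, where $X_{(1)}<\dots<X_{(n)}$ are the order statistics of $n$ i.i.d. uniforms $X_1,\dots,X_n$ on $[0,1]$; Proposition~\ref{prop:threeNine} guarantees $\sub(n,g)\in\cP$ almost surely. Let $E$ be the event that each $I_i$ contains exactly one of $X_1,\dots,X_n$. A short density computation gives $\PP(E)=n!/n^n>0$ and shows that conditional on $E$ the joint law of $(X_{(1)},\dots,X_{(n)})$ is that of $n$ independent uniforms, one per $I_i$, i.e., coincides with the law of $(Z_1,\dots,Z_n)$. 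Hence $\bu$ has the same distribution as $\sub(n,g)$ conditioned on $E$, and since $\PP(\sub(n,g)\in\cP)=1$ and $\PP(E)>0$, we conclude $\PP(\bu\in\cP)=1$.

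Combining the two points, some realization of $\bu$ lies in $\cP\cap\{0,1\}^n$ and satisfies $d_1(\bw,\bu)\le\EE[d_1(\bw,\bu)]=\|f_\bw-g\|_1$, so $d_1(\bw,\cP)\le\|f_\bw-g\|_1$. Taking the infimum over $g\in\overline{\cP}$ then yields $d_1(\bw,\cP)\le d_1(f_\bw,\overline{\cP})$, as required.
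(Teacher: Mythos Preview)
Your proof is correct and follows essentially the same route as the paper's: pick $g\in\overline{\cP}$, sample one uniform point per subinterval $I_i$, read off a word via $g$, compute $\EE[d_1(\bw,\bu)]=\|f_\bw-g\|_1$, and conclude by the first-moment method. The only difference is that the paper simply asserts (``In particular'') that the word obtained by sampling one uniform per $I_i$ lies in $\cP$ a.s.\ as a consequence of $\PP(\sub(n,g)\in\cP)=1$, whereas you supply the justification explicitly via the conditioning argument on the event $E$ that each $I_i$ receives exactly one of the $n$ i.i.d.\ uniforms; your computation that the conditional law of the order statistics on $E$ coincides with the law of the $Z_i$'s is a clean way to make this step rigorous.
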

\begin{proof}
  The claim is obvious if {$\overline{\cP}$ is empty or $\bw\in\cP$}.
  Let $\delta>0$ and let $\bw\not\in\cP$ be a word of length~$n$. By definition  there is a $g\in\overline{\cP}$ such that $d_1(f_{\bw},g)\leq d_1(f_{\bw},\overline{\cP})+\delta$ and
  by Proposition~\ref{prop:threeNine} $g$ is  $0$-$1$ valued
  and $\PP(\sub(n,g)\in\cP)=1$ for all $n\geq 1$. In particular, $\PP(\bw'\in \cP)=1$ when $\bw'=\sub(\bX,g)$, with $\bX=(X_1,...,X_n)$ and  $X_i$ is uniformly chosen in the interval $[\frac{i-1}n,\frac in]$.
  Since the probability {(conditioned on $X_i$)}
  that index $i$ contributes to
  $d_1(\bw,\bw')$ is $g(X_i)$ if $w_i=0$ and $1-g(X_i)$ if $w_i=1$ we have
  \[
  \EE(d_1(\bw,\bw'))=\|f_{\bw}-g\|_1=d_1(f_{\bw},g)\leq d_1(f_{\bw},\overline{\cP})+\delta.
  \]
  In particular, there exists  $\widetilde{\bw}\in\cP$ for which $d_1(f_{\bw},\overline\cP)+\delta\geq d_1(\bw,\widetilde{\bw})\geq d_1(\bw,\cP)$ holds.
  Since $\delta$ is arbitrary, the desired conclusion follows.
\end{proof}

\begin{lemma}\label{lem:threeThirty}
  If $\cP$
  is {a} hereditary word property and $(f_n)_{n\to\infty}$
  is a sequence of functions in~$\cW$ such that
  $d_{\Box}(f_n,\overline{\cP})\to 0$,  then  $d_1(f_n,\overline{\cP})\to 0$.
\end{lemma}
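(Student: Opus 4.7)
My plan is to argue by contradiction, combining the compactness of $(\cW,d_\Box)$ from Theorem~\ref{cor:compact} with the structural rigidity of $\overline{\cP}$ furnished by Proposition~\ref{prop:threeNine}. Suppose the conclusion fails; then after passing to a subsequence I may assume $d_1(f_n,\overline{\cP})\geq\eps$ for some $\eps>0$, and by compactness to a further subsequence along which $f_n\overset{\Box}{\to}f$ for some $f\in\cW$. I would first check that $f\in\overline{\cP}$: since $\overline{\cP}$ is characterized as $\{g\in\cW\colon t(\bu,g)=0\text{ for all }\bu\notin\cP\}$ by Proposition~\ref{prop:threeNine} and $t(\bu,\cdot)$ is $d_\Box$-continuous by Lemma~\ref{lem:tcount}, the set $\overline{\cP}$ is $d_\Box$-closed. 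Choosing $g_n\in\overline{\cP}$ with $d_\Box(f_n,g_n)\to 0$ (possible by hypothesis) then forces $g_n\overset{\Box}{\to}f$, hence $f\in\overline{\cP}$.

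The key structural input I would exploit is the second part of Proposition~\ref{prop:threeNine}: unless $\cP$ contains all words (in which case $\overline{\cP}=\cW$ and the lemma is trivial), every element of $\overline{\cP}$ is $0$-$1$ valued almost everywhere. Thus $f=\vecone_A$ for some measurable $A\subseteq[0,1]$, and to contradict $d_1(f_n,\overline{\cP})\geq\eps$ it suffices to prove $d_1(f_n,f)\to 0$. Writing
\[
d_1(f_n,f)=\int_A(1-f_n)\diff x+\int_{A^c}f_n\diff x,
\]
the task reduces to showing $\int_A f_n\to|A|$ and $\int_{A^c}f_n\to 0$.

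The main step, and conceptually the heart of the argument, is to transfer control from intervals (where $d_\Box$-convergence lives) to arbitrary Lebesgue measurable sets. My plan here is to invoke regularity of Lebesgue measure: for any $\delta>0$, approximate $A$ by a finite disjoint union of intervals $U$ with $|A\triangle U|<\delta$. Then $\int_U f_n\to\int_U f$ follows from $d_\Box$-convergence (a finite sum of integrals over intervals), and the triangle inequality yields
\[
\Big|\int_A f_n-|A|\Big|\leq \Big|\int_U f_n-\int_U f\Big|+2|A\triangle U|<3\delta
\]
for all sufficiently large $n$. Since $\delta>0$ is arbitrary, $\int_A f_n\to|A|$; the same reasoning applied to $A^c$ finishes the proof.

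The hard part is really the observation that $d_\Box$-convergence does not in general imply $d_1$-convergence (for instance, strongly oscillating functions converge to their mean in $d_\Box$ but not in $d_1$), so the argument genuinely depends on the limit being $0$-$1$ valued. Heredity of $\cP$ is exactly the structural input that forces this dichotomy via Proposition~\ref{prop:threeNine}; without it the whole strategy would collapse, and identifying this as the key enabling fact is what makes the proof work.
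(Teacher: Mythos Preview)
Your proof is correct and follows essentially the same approach as the paper: compactness yields a $d_\Box$-limit $f$, Proposition~\ref{prop:threeNine} forces $f$ to be $0$-$1$ valued, and one then upgrades $d_\Box$-convergence to $d_1$-convergence by approximating the level sets with finite unions of intervals. The only noteworthy difference is how you show $f\in\overline{\cP}$: the paper goes back to the definition and produces words in $\cP$ converging to $f$, whereas you observe directly that $\overline{\cP}=\bigcap_{\bu\notin\cP}\{g:t(\bu,g)=0\}$ is $d_\Box$-closed by Lemma~\ref{lem:tcount}, which is a slightly slicker route to the same conclusion.
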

\begin{proof}
We may assume that $\cP$ is not the set of all words, otherwise $\overline{\cP}=\cW$ and the lemma follows.
Let $(f_n)_{n\to\infty}$ with $d_{\Box}(f_n,\overline{\cP})\to 0$ be given. By definition
 there is a sequence $(\eps_n)_{n\to\infty}$ that converges to $0$, and a sequence $(g_n)_{n\to\infty}$ in $\overline{\cP}$ such that 
  \[    d_\Box(f_n,g_n)\le d_\Box(f_n,\overline{\cP})+\eps_n\qquad\text{for all } n.
    \]
Since $(\cW,d_{\Box})$ is compact   we may assume (by passing to a subsequence)
 that $g_n\overset{\Box}{\to}f$ for some
{$f\in\cW$}. 
By definition of closure there is an increasing function $m=m(n)$ such that for every $g_n\in\overline{\cP}$ there is a word $\bw_m\in\cP$ with $d_{\Box}(f_{\bw_m},g_n)\leq\eps_n$.   
Since $d_{\Box}(f_{\bw_m},f)\leq d_{\Box}(f_{\bw_m},g_n)+d_{\Box}(g_n,f)\leq 2\eps_n\to 0$ when $n\to\infty$, it follows that $f_{\bw_m}\overset{\Box}{\to}f$ or 
in other words, that $(\bw_m)_{m\to\infty}$ converges to $f$ and thus $f\in\overline{\cP}$.
Moreover, by Proposition~\ref{prop:threeNine},
we get that $f$ is $0$--$1$ valued.
  Consider the Lebesgue measurable sets $\Omega_b=f^{-1}(b)$ for $b\in\{0,1\}$.
  Then
  \[
  d_1(f_n,f)=
  \|f_n-f\|_1 = \int_{\Omega_0}f_n+\int_{\Omega_1}(1-f_n).
  \]
  In case $\Omega_0, \Omega_1$  are intervals we conclude from
  $\lim_{n\to\infty}d_\Box(f_n,f)=0$ that
  \[
  \lim_{n\to\infty}\int_{\Omega_0}f_n=\int_{\Omega_0}f=0\qquad\text{and}\qquad \lim_{n\to\infty}\int_{\Omega_1}(1-f_n)=\int_{\Omega_1}(1-f)=0.
  \] 
  By standard limiting arguments this extends to finite unions of intervals
  and finally to all Lebesgue measurable sets, so we conclude
  that $d_1(f_n,f)\to 0$ when $n\to\infty$ and the lemma follows  since $f\in\overline{\cP}$.
\end{proof}

Finally, we are ready to derive the main result of this section, that any hereditary word property is testable.
\begin{proof}[Proof of Theorem~\ref{theo:wordTesting}]
  Let $\cP$ be a hereditary word property and $\overline\cP$ its closure. 
  Let $\cP'(\frac1i)$ be the collection
  of words $\bv$ that either belong to $\cP$ or  that satisfy
  $d_{\Box}(f_{\bv},\overline{\cP})\leq\frac1i$. Let $\cP'=\bigcap_{i\in\NN} \cP'(\frac1i)$ which  
  we claim to be a test property for $\cP$. 
  Perfect completeness is clearly satisfied since $\cP$ is hereditary, so
    when $\bw\in\cP$ then $\sub(\ell,\bw)\in \cP\subseteq \cP'$ with probability $1$.
    
    To prove soundness let $\eps>0$ be given. We need to show the existence of
     an $\ell(\eps)$ 
  such that  any $\bw\in\{0,1\}^n$ with $d_1(\bw,\cP)\geq \eps$ satisfies
   $\PP(\sub(\ell,\bw)\in\cP')\leq \tfrac{1}{3}$ for all
  $\ell(\eps)\leq\ell\leq n$.
    We apply  Lemma~\ref{lem:threeThirty} and conclude that there is a $\delta=\delta(\eps)>0$
  such that $d_1(f,\overline{\cP})<\eps$ whenever $d_{\Box}(f,\overline{\cP})<\delta$. Further, 
  we apply Lemma~\ref{lem:combRandomWords}  and
  Lemma~\ref{lem:inclosure} with~$\delta/4$ 
  to obtain $n_0$ and $\ell_0$.  Finally, choose $\ell(\eps)\geq\max\{n_0,\ell_0,2^{15}\eps^{-3}\}$.
  
  With this choice of constants note that a word $\bv\in\cP'$ of length $\ell\geq \ell(\eps)$ satisfies  $d_{\Box}(f_{\bv},\overline{\cP}\big)\leq\delta/2$. Indeed, by definition of $\cP'$
  this is clear if  $\bv\in \cP'\setminus \cP$ and for  $\bv\in \cP$
   this follows from Lemma~\ref{lem:inclosure} and  $\ell\geq \ell(\eps)$.
Now  let $\bw\in\{0,1\}^n$ with $d_1(\bw,\cP)\geq \eps$  be given, let $\bu=\sub(\ell,\bw)$  for some $\ell(\eps)\leq \ell\leq n$ and
  for a contradiction assume that  soundness does not hold. Then we conclude from the above that
    \[\PP\big(d_{\Box}(f_{\bu},\overline{\cP})\leq\delta/2\big)\geq \PP\big(\bu\in\cP'\big)>1/3.\]
 Further, by Lemma~\ref{lem:combRandomWords} and the choice of $\ell(\eps)$ we have 
 $\PP\big(d_{\Box}(f_{\bu},f_{\bw})<\delta/4\big)> 2/3$
 and thus, there is a word ${\bv}$  such that
  $d_{\Box}(f_{{\bv}},f_{\bw})<\delta/4$ and
  $d_{\Box}(f_{{\bv}},\overline{\cP})\leq\delta/2$ hold simultanously.  
  Triangle inequality then gives $d_{\Box}(f_{\bw},\overline{\cP})<\delta$, which
  by Lemma~\ref{lem:threeThirty} and the choice of $\delta$, implies
  $d_1(f_{\bw},\overline{\cP})<\eps$.
  Finally, Proposition~\ref{prop:threeThirteen}  yields 
  $d_{1}(\bw,\cP)\leq d_{1}(f_{\bw},\overline{\cP})<\eps$
  which is the desired contradiction.
\end{proof}

\section{Finite forcibility}\label{sec:forcible}
{In this section we investigate word limits that are prescribed by a finite number of subsequence densities.}
 {In particular, we prove Theorem~\ref{thm:forcible} showing that piecewise polynomial functions are  forcible. 
 The proof relies on the following lemma which shows, among other,
that  moments of cumulative distributions
  can be characterized by a finite number of subsequence densities of
  the distribution's mass density function.
\begin{lemma}\label{lem:xFx}
  If $f:[0,1]\to [0,1]$ is a Lebesgue measurable function
    and $F(x)=\int_0^x f(t)\diff t$, 
    then for each $i,j\in\NN$ we have
\[
\int x^iF(x)^jdx =
  \frac{i!j!}{(i+j+1)!}\sum_{\substack{\bu\in\{0,1\}^{i+j+1} \\ u_1+...+u_{i+j}\geq j}}{\tbinom{u_1+...+u_{i+j}}{j}}t(\bu,f).
\]
\end{lemma}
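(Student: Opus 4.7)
The plan is to mimic the idea from the proof of Lemma~\ref{lem:f=g} but with two layers of unfolding: one for the factor $x^i$ and one for the factor $F(x)^j$. First I would write $x^i$ and $F(x)^j$ as symmetrized simplex integrals, namely
\[
x^i=i!\int_{0\le z_1<\dots<z_i\le x}\diff z_1\dots\diff z_i,\qquad F(x)^j=j!\int_{0\le t_1<\dots<t_j\le x}\prod_{k=1}^{j}f(t_k)\,\diff t_1\dots\diff t_j,
\]
and then multiply them and integrate $x$ over $[0,1]$ to obtain an $(i+j+1)$-fold integral over the region where all $z$'s and $t$'s are below $x$.

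Next I would group the integration domain according to the relative order of the $z$'s and $t$'s. Since the $z_\bullet$'s and $t_\bullet$'s are each already linearly ordered, the interleaving is parametrized by a word $\bu\in\{0,1\}^{i+j}$ with $\|\bu\|_1=j$ (a ``1'' marking a $t$-position, a ``0'' marking a $z$-position). After relabelling the merged ordered variables as $y_1<\dots<y_{i+j}<y_{i+j+1}=x$, the integrand becomes $\prod_{k:u_k=1}f(y_k)$, while positions with $u_k=0$ and the final position contribute the factor $1$. This gives
\[
\int_0^1 x^iF(x)^j\diff x=i!\,j!\sum_{\substack{\bu\in\{0,1\}^{i+j}\\ \|\bu\|_1=j}}\int_{0\le y_1<\dots<y_{i+j+1}\le 1}\prod_{k:u_k=1}f(y_k)\,\diff y_1\dots\diff y_{i+j+1}.
\]

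The remaining task is to convert these ``partial'' densities into honest densities $t(\bv,f)$, which have factors $f^{v_k}$ at every position. Using $1=f^0+f^1$ at each unconstrained position (and at position $i+j+1$), I would expand
\[
\prod_{k:u_k=1}f(y_k)=\sum_{\bv\succeq\bu^0}\prod_{k=1}^{i+j+1}f^{v_k}(y_k),
\]
where $\bu^0=(u_1,\dots,u_{i+j},0)\in\{0,1\}^{i+j+1}$ and $\bv\succeq\bu^0$ means $v_k\ge u_k^0$ coordinatewise. Each inner integral then equals $t(\bv,f)/(i+j+1)!$ by definition~\eqref{eq:subseqdensity}.

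Finally I would swap the double sum and count, for each $\bv\in\{0,1\}^{i+j+1}$, the number of $\bu$ with $\|\bu\|_1=j$ and $\bu^0\preceq \bv$: this requires choosing $j$ of the $1$-positions of $\bv$ among its first $i+j$ coordinates, giving $\binom{u_1+\dots+u_{i+j}}{j}$ (and forcing $u_1+\dots+u_{i+j}\ge j$). Assembling the constants $i!\,j!/(i+j+1)!$ yields the stated identity. The only delicate point is the bookkeeping in this last swap; everything else is routine Fubini together with the standard $1=f^0+f^1$ trick.
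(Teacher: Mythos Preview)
Your proposal is correct and follows essentially the same route as the paper's proof: both unfold $x^i$ and $F(x)^j$ into ordered simplex integrals, interleave via a choice of $j$ positions among $i+j$ (your word $\bu$ with $\|\bu\|_1=j$ is the paper's subset $S\subseteq[i+j]$ with $|S|=j$), expand the unconstrained factors via $1=f^0+f^1$, and then regroup by the resulting length-$(i{+}j{+}1)$ word, producing the binomial multiplicity $\tbinom{v_1+\dots+v_{i+j}}{j}$. The only cosmetic issue is that in your final line you silently reuse the letter $\bu$ for what you had been calling $\bv$; otherwise the argument matches the paper's.
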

\begin{proof}
Observe that
\begin{align*}
  \int x^iF(x)^j\diff x 
  & =
  \int\Big(\int_0^x dy\Big)^i\Big(\int_0^x f(z)dz\Big)^j\diff x
  \\ & =
  \int\Big(\int_{0\leq y_1,...,y_i\leq x}\diff y_1...\diff y_i\Big)
      \Big(\int_{0\leq z_1,...,z_j\leq x}\prod_{k=1}^{j}f(z_k)\diff z_1...\diff z_j\Big)\diff x
  \\ & =
  i!j!\int\Big(\int_{0\leq y_1<...<y_i\leq x}\diff y_1...\diff y_i\Big)
      \Big(\int_{0\leq z_1<...<z_j\leq x}\prod_{k=1}^{j}f(z_k)\diff z_1...\diff z_j\Big)\diff x
  \\ & =
  i!j!\sum_{S\subseteq [i+j] : |S|= j}
  \int_{0\leq x_1<...<x_{i+j}\leq x}\prod_{s\in S}f(x_s)\diff x_1...\diff x_{i+j}\diff x.
\end{align*}
Since
\[
1=
\prod_{s\in [i+j]\setminus S} \big(f(x_s)+(1-f(x_s))\big)
=\sum_{U\subseteq [i+j]: S\subseteq U}\big(\prod_{s\in U\setminus S}f(x_s)\big)\big(\prod_{s\not\in U}(1-f(x_s))\big),
\]
we get
\begin{align*}  
  \int x^iF(x)^j\diff x 
  & = 
  i!j!\sum_{U\subseteq [i+j] : |U|\geq j}\tbinom{|U|}{j}
  \int_{0\leq x_1<...<x_{i+j}\leq x}\prod_{s\in U}f(x_s)\prod_{s\not\in U}(1-f(x_s))\diff x_1...\diff x_{i+j}\diff x  
  \\ & =
    \frac{i!j!}{(i+j+1)!}\sum_{\substack{\bu\in\{0,1\}^{i+j+1} \\ u_1+...+u_{i+j}\geq j}}\tbinom{u_1+...+u_{i+j}}{j}t(\bu,f).
\end{align*}
\end{proof}

We next prove this section's main result concerning the finite forcibility of piecewise polynomial functions.
\begin{proof}[Proof of Theorem~\ref{thm:forcible}]
 Let $f$ be a piecewise polynomial function given with the corresponding polynomials
  $P_1(x),\dots, P_k(x)$ and the corresponding intervals $(I_1,...,I_k)$ ordered by their natural appearance in $[0,1]$.
  Thus, 
for any $i\in[k]$ and $x\in I_i$ we have $f(x)=P_i(x)$.

 
For each $i\in [k]$ and $x\in I_i$ we define
  \[
  Q_i(x)=\int_{I_i\cap [0,x]} P_i(t)\diff t+\sum_{j=1}^{i-1} \int_{I_j} P_j(t)\diff t.
  \]
  Then $Q_i$ is a polynomial on the interval $I_i$ and we extend it to the whole interval $[0,1]$.
 Further, $F(x)=\int_0^xf(t)\diff t$ satisfies
  $F(x)=Q_i(x)$ for all $i\in[k]$ and $x\in I_i$.
 {Next, let} $d=\sum_{i\in[k]}\deg(Q_i)$ 
 and define the polynomial 
\[
P(x,y)=\big(y-Q_1(x)\big)^2\big(y-Q_2(x)\big)^2\dots \big(y-Q_k(x)\big)^2=\sum_{1\leq i+j\leq 2d}c_{ij} x^jy^{i}
\]
for some coefficients $c_{ij}$. 
Since $P(x,F(x))=0$ for all $x\in [0,1]$ we have 
 \begin{align}\label{eq:yF}\int_0^1P\big(x,F(x)\big)\diff x=0.\end{align} This remains true when we remove duplicated $Q_i$'s in the definition of $P(x,y)$, hence
we may  assume that the $Q_i$'s are pairwise distinct. 
  
By
Lemma~\ref{lem:xFx} we conclude that there is a list of words of length at most $2d+1$, say, $\bu_1,\dots, \bu_s$ with $s\leq 2^{2d+1}$, 
such that~\eqref{eq:yF} already follows from the prescription of the values $t(\bu_i,f)$, $i\in [s]$.
In particular, if   $h\in\cW$ satisfies $t(\bu_i,h)=t(\bu_i,f)$ for all
$i\in [s]$, then $H(x)=\int_0^x h(t)\diff t$ is continuous and
satisfies  $0=\int_0^1P\big(x,H(x)\big)\diff x$. Since $P\geq 0$ this implies that $P\big(x,H(x)\big)=0$  everywhere,
and by the definition of $P(x,y)$ we  conclude that for each $x\in[0,1]$ there is an $\ell=\ell_H(x)\in[k]$ such that $H(x)=Q_{\ell}(x)$.

Let $(a_1,b_1),(a_2,b_2),\dots, (a_t,b_t)$ be the intersection points of $Q_1,\dots, Q_k$ ordered by their first coordinate (with ties broken arbitrarily) and
let $a_0=0$ and $a_{t+1}=1$. 
Note that $t\leq \tbinom k2\max_{i\in[k]}\deg(Q_i)$ as two distinct polynomials $Q_i$ and~$Q_j$ have at most 
$\max \{\deg(Q_i),\deg (Q_j)\}$ intersection points.
Further, for an interval $(a_{i-1},a_{i})$,  $i\in[t+1]$,  the function $\ell_H(x)$ must be constant on this interval.
This is because $H$ is continuous and therefore if $x'>x$ and $\ell_H(x)\neq \ell_H(x')$, then there must exist an intersection point in the interval
$[x,x']$.
 We infer that $H$ is uniquely determined by the $(t+1)$ values $\ell_H(\cdot)\in[k]$ on the intervals $(a_{i-1},a_{i})$, $i\in[t+1]$.
Hence, there are at most~$k^{t+1}$ such functions $H$, implying at most that many functions
$h:[0,1]\to[0,1]$ such that $t(\bu_i,h)=t(\bu_i,f)$ for all
$i\in [s]$.


To finish the proof note that by uniqueness of word limits, see Theorem~\ref{thm:limits},  we can find for  each~$h$, which differs from $f$ by a non-zero measure set, a word 
$\bu_h$ such that $t(\bu_h,f)\neq t(\bu_h,h)$.
Thus,~$f$ is uniquely determined by the densities of at most $s+k^{t+1}\leq (k+1)^{2k^2(1+\max_i\deg(P_i))}$ words.
\end{proof}\begin{remark}\label{rem:forcibleqr}
The same proof for $k=1$ and $P_1(x)=a$ being constant yields an alternative  proof of the second part of Theorem~\ref{thm:quasirandom}.
In this case \[P\big(x,F(x)\big)=\big(F(x)-a x\big)^2=F(x)^2-2axF(x)+a^2x^2\] 
and by Lemma~\ref{lem:xFx}, the fact $\int_0^1 P\big(x,F(x)\big)\diff x=0$ is determined by densities of words of length three.
\end{remark}

\section{Permutons from words limits}\label{sec:permFromWords}
In this section we use our results concerning word limits to give an alternative proof of two key results  by 
Hoppen et al~\cite{HOPPEN} concerning permutons, limits of permutation sequences, see Proposition~\ref{perm:cauchy} and Theorem~\ref{thm:hoppen}. {Overall, our approach gives a simpler proof for the existence of permutons, Theorem~\ref{thm:hoppen}, due to the simpler objects (words and measurable transformations of the unit interval) on which our analysis is carried out.
Moreover, in Proposition~\ref{perm:cauchy} we give} a direct proof (avoiding compactness arguments) of the equivalence between  $t$-convergence and  convergence in the respective cut-distance{, which we believe is both technically original and of independent interest.}

For $n\in\mathbb N$ we write $\mathfrak S_n$ for the set of permutations of order $n$ and $\mathfrak S$ for the set of all finite permutations.
Also, for $\sigma\in\mathfrak S_n$ and $\tau\in\mathfrak S_k$ we write $\Lambda(\tau,\sigma)$ for the number of copies of $\tau$ in $\sigma$, that is, the number of $k$-tuples $1\le x_1<\dots<x_k\le n$ such that for every $i,j\in[k]$
\begin{equation*}\sigma(x_i)\le \sigma(x_j)\hspace{.5cm}\text{ iff }\hspace{.5cm}\tau(i)\le \tau(j).\end{equation*}
The density of copies of $\tau$ in $\sigma$, denoted by $t(\tau,\sigma)$, was defined as the probability that $\sigma$ restricted to a randomly chosen $k$-tuple of $[n]$ yields a copy of $\tau$, that is
\[t(\tau,\sigma)=\begin{cases} \binom{n}{k}^{-1}\Lambda(\tau,\sigma)& \text{ if } n\ge k,\\
0 & \text{ otherwise. }\end{cases}\]
Following~\cite[Definition~1.2]{HOPPEN}, a sequence $(\sigma_n)_{n\to\infty}$ of permutations, with $\sigma_n\in\mathfrak S_n$ for each $n\in\mathbb N$, is said to be convergent if 
$\lim_{n\to\infty}t(\tau,\sigma_n)$ exists for every permutation $\tau\in\mathfrak S$.
A permuton is a probability measure $\mu$ on the Borel $\sigma$-algebra on $[0,1]\times[0,1]$ that has uniform marginals, that is, for every measurable set $A\subseteq [0,1]$ one has
\begin{equation*}\mu(A\times [0,1])=\mu([0,1]\times A)=\lambda(A).
\end{equation*}
The collection of permutons is denoted by  $\mathcal Z$. It turns out that every permutation may be identified with a permuton  which preserves the sub-permutation densities. Indeed, given a permutation $\sigma\in\mathfrak S_n$ we define the permuton $\mu_\sigma$ associated to $\sigma$ in the following way.
First, for $i,j\in [n]$ define
\[B_{i,j}=B_i\times B_j\qquad \text{where}\qquad B_i=\begin{cases}\big[\tfrac{i-1}n,\tfrac in\big)&     \text{if $i\neq n$,} \\
\big[\tfrac{n-1}n,1\big]&     \text{otherwise.} \end{cases}\]
and note that $B_{i,j}$ has Lebesgue measure
 $\lambda^{(2)}(B_{i,j})=1/n^2$ for every $i,j\in[n]$. For every measurable set $E\subseteq[0,1]^2$ we let  
\[\mu_\sigma(E)=\sum_{i=1}^nn\lambda^{(2)}(B_{i,\sigma(i)}\cap E)=\int_{E}n\mathbf{1}\{\sigma(\lceil nx\rceil)=\lceil ny\rceil\}\diff x\diff y.\]
It is easy to see that $\mu_\sigma\in \mathcal Z$.

We next argue that the densities of sub-permutations is preserved by
$\mu_\sigma$.
First, let us explain what we mean by sub-permutation densities for a permuton. Given $\mu\in\mathcal Z$ and $k\in \mathbb N$, we sample $k$ points $(X_1,Y_1),\dots,(X_k,Y_k)$, where each $(X_i,Y_i)$ is sampled independently accordingly to $\mu$. Then, if $\sigma,\pi\in\mathfrak S_k$ are two permutations such that 
\[X_{\pi(1)}\le \dots\le X_{\pi(k)}\hspace{.5cm}\text{and}\hspace{.5cm}Y_{\sigma(1)}\le \dots\le Y_{\sigma(k)},\]
we define the random sub-permutation $\sub(k,\mu)\in\mathfrak S_k$ by $\sub(k,\mu)=\sigma^{-1}\pi$.

{Henceforth, let $\mu^{(k)}=\mu\otimes\dots\otimes\mu$ be the $k$-fold product measure on $([0,1]\times[0,1])^k$.}
Given a permutation $\tau\in\mathfrak S_k$, the density of $\tau$ in $\mu$, denoted by $t(\tau,\mu)$, is defined as the probability that $\sub(k,\mu)$ equals {$\tau$}, that is
\[
t(\tau,\mu)=k!\int \mathbf{1}\{x_1<\dots<x_k, y_{\tau^{-1}(1)}<\dots<y_{\tau^{-1}(k)}\}\diff\mu^{(k)}{.}
\]
 It is easily shown (see~\cite[Lemma 3.5]{HOPPEN} for a proof) that given any permutations $\sigma\in\mathfrak S_n$ and $\tau\in \mathfrak S_k$ we have
\begin{equation}\label{perm:eq1}|t(\tau,\sigma)-t(\tau,\mu_\sigma)|\le\binom k2\frac1n. 
\end{equation} 
In particular, \eqref{perm:eq1} implies that a sequence of permutations $(\sigma_n)_{n\to\infty}$ converges if and only if $(t(\tau,\mu_{\sigma_n}))_{n\to\infty}$ is convergent for every permutation $\tau\in\mathfrak S$, and thus we may talk about permutations and permutons as the ``same" object. We say that a sequence of permutons $(\mu_n)_{n\to\infty}$ is $t$-convergent if $(t(\tau,\mu_n))_{n\to\infty)}$ converges for every $\tau\in\mathfrak S$. 

As in the case of words one can define a metric $d_\Box$ on  $\mathcal Z$ so that for all $\tau\in\mathfrak S$ the maps $t(\tau,\cdot)$ are Lipschitz continuous with respect to $d_\Box$. Indeed, given two permutons $\mu,\nu\in\mathcal Z$ define 
\[d_\Box(\mu,\nu)=\sup_{I,J\subseteq [0,1]}|\mu(I\times J)-\nu (I\times J)|,\]
where the supremum is taken over all intervals in $[0,1]$.
Next, we establish that $t(\tau,\cdot)$ is Lipschitz continuous with respect to $d_\Box$ via the following result, which is the permuton analogue of Lemma~\ref{lem:tcount}.
%
\begin{lemma}\label{perm:lipschitz}Given a permutation $\tau\in \mathfrak S_k$, for all permutons $\mu,\nu\in\mathcal Z$  we have
	\[|t(\tau,\mu)-t(\tau,\nu)|\le k^2d_\Box(\mu,\nu).\]
\end{lemma}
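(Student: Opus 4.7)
The plan is to mirror the telescoping argument used in the proof of Lemma~\ref{lem:tcount}, replacing integrals against Lebesgue measure weighted by $f,g$ by integrals against the product permuton measures $\mu^{(k)}$ and $\nu^{(k)}$. Writing $I_\tau(\vec x,\vec y)=\mathbf{1}\{x_1<\dots<x_k,\ y_{\tau^{-1}(1)}<\dots<y_{\tau^{-1}(k)}\}$, the starting identity is
\[
t(\tau,\mu)-t(\tau,\nu)=k!\int I_\tau\,\big(\diff\mu^{(k)}-\diff\nu^{(k)}\big).
\]
I would then telescope
\[
\mu^{(k)}-\nu^{(k)}=\sum_{j=1}^{k}\mu^{(j-1)}\otimes(\mu-\nu)\otimes\nu^{(k-j)}
\]
and bound the absolute value of each of the $k$ resulting terms separately.

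Fixing $j$ and holding the other $(k-1)$ pairs $(x_i,y_i)_{i\neq j}$ fixed, the inner integral against $\diff(\mu-\nu)(x_j,y_j)$ vanishes unless the partial ordering among the fixed coordinates is consistent, in which case it equals $(\mu-\nu)(R_j)$, where $R_j$ is the axis-aligned rectangle $(x_{j-1},x_{j+1})\times(y_{\tau^{-1}(m-1)},y_{\tau^{-1}(m+1)})$ with $m=\tau(j)$ and the natural boundary conventions $x_0=y_{\tau^{-1}(0)}=0$, $x_{k+1}=y_{\tau^{-1}(k+1)}=1$. By the very definition of $d_\Box$, this inner integral is bounded in absolute value by $d_\Box(\mu,\nu)$.

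To bound the outer integration of this quantity against $\diff\mu^{(j-1)}\otimes\diff\nu^{(k-j)}$, I would exploit the defining property of permutons: both $\mu$ and $\nu$ have uniform $x$-marginals, so the $(k-1)$ remaining $x$-coordinates $x_i$ (for $i\neq j$) are i.i.d.\ uniform on $[0,1]$ under $\mu^{(j-1)}\otimes\nu^{(k-j)}$. The event that they obey the specific linear order $x_1<\dots<x_{j-1}<x_{j+1}<\dots<x_k$ necessary for the inner integrand to be nonzero has probability $1/(k-1)!$. Hence each of the $k$ telescoping terms contributes at most $d_\Box(\mu,\nu)/(k-1)!$, and multiplying by the $k!$ prefactor from the definition of $t(\tau,\cdot)$ yields the claimed bound $k^2\,d_\Box(\mu,\nu)$. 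The main subtlety is bookkeeping, namely to recognize that extracting this $1/(k-1)!$ factor via the uniform marginals is what makes the bound come out to $k^2$; a naive bound which treats the outer integration trivially would yield only $k\cdot k!\cdot d_\Box(\mu,\nu)$, which is much too weak.
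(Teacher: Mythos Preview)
Your proposal is correct and follows essentially the same approach as the paper's proof: the same telescoping of $\mu^{(k)}-\nu^{(k)}$ into $k$ terms, the same reduction of the inner integral to $(\mu-\nu)$ of an axis-aligned rectangle bounded by $d_\Box(\mu,\nu)$, and the same use of the uniform $x$-marginals of permutons to extract the $1/(k-1)!$ factor from the outer integration. If anything, your bookkeeping on the $y$-interval (taking $m=\tau(j)$) is slightly more careful than the paper's, which writes the interval endpoints with index $j$ rather than $\tau(j)$.
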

\begin{proof}Given $\vec x,\vec y\in [0,1]^k$, we denote by $(\vec x,\vec y)$ the vector of pairs $(x_j,y_j)$ for $j\in[k]$. Define
	\begin{equation}E^{\tau}=\{(\vec{x},\vec y)\in([0,1]\times [0,1])^k:x_1<\dots<x_k,  y_{\tau^{-1}(1)}<\dots<y_{\tau^{-1}(k)}\}.
\end{equation}
Then, we have $t(\tau,\mu)=k!\mu^{(k)}(E^\tau)$ and $t(\tau,\nu)=k!\nu^{(k)}(E^\tau)$. For $j\in [k]$, let \[Q_j=\mu^{(j)}\otimes\nu^{(k-j)}-\mu^{(j-1)}\otimes\nu^{(k-j+1)}\] and note that
\[\frac{1}{k!}|t(\tau,\mu)-t(\tau,\nu)|=|\mu^{(k)}(E^\tau)-\nu^{(k)}(E^\tau)|=\Big|\sum_{j=1}^{k}Q_j(E^\tau)\Big|\le \sum_{j=1}^{k}|Q_j(E^\tau)|.  \]
Let $(\vec x,\vec y)\in([0,1]\times [0,1])^k$. We define 
\[E_j^\tau(\vec x,\vec y)=\begin{cases}[0,x_2]\times [0,y_{\tau^{-1}(2)}]&\mbox{ for }j=1,\\
[x_{j-1},x_{j+1}]\times [y_{\tau^{-1}(j-1)},y_{\tau^{-1}(j+1)}]&\mbox{ for } 2\le j\le k-1,\\
[x_{k-1},1]\times [y_{\tau^{-1}(k-1)},1]&\mbox{ for }j=k
\end{cases}\]
if  $x_1<\dots<x_{j-1}<x_{j+1}<\dots<x_k$ and  $y_{\tau^{-1}(1)}<\dots <y_{\tau^{-1}(j-1)}<y_{\tau^{-1}(j+1)}<\dots<y_{\tau^{-1}(k)}$, and $E_j^\tau(\vec x,\vec y)=\emptyset$ otherwise.
Thus $\big|{\mu(E^{\tau}_j(\vec x,\vec y))-\nu(E^{\tau}_j(\vec x,\vec y))}\big|\le d_\Box(\mu,\nu)$ for all $(\vec x,\vec y)$. Letting $\vec x_{-j}\in [0,1]^{k-1}$ be the vector obtained by removing $x_j$ from $\vec x\in[0,1]^k$, for $2\le j\le k-1$
we have that 
\begin{align*}
  |Q_j(E^\tau)| 
  & = \Big|\int{\big(\mu(E^{\tau}_j(\vec x,\vec y))-\nu(E^{\tau}_j(\vec x,\vec y))\big)}\diff\mu^{(j-1)}\otimes \nu^{(k-j)}(\vec x_{-j},\vec y_{-j})\Big|\\
  & \le \int\Big|{\mu(E^{\tau}_j(\vec x,\vec y))-\nu(E^{\tau}_j(\vec x,\vec y))}\Big|\diff\mu^{(j-1)}\otimes \nu^{(k-j)}(\vec x_{-j},\vec y_{-j})\\
  & \le \int_{x_1<\dots<x_{j-1}<x_{j+1}<\dots<x_k}\Big|{\mu(E^{\tau}_j(\vec x,\vec y))-\nu(E^{\tau}_j(\vec x,\vec y))}\Big|\diff\mu^{(j-1)}\otimes \nu^{(k-j)}(\vec x_{-j},\vec y_{-j})\\
  & \le \frac{1}{(k-1)!} d_\Box(\mu,\nu),
\end{align*}
and for $j=1$ and $j=k$ the same bound holds. Finally, summing for each $j\in[k]$ we obtain the bound.
\end{proof}
In Hoppen et al.~\cite{HOPPEN}, the compactness of $(\mathcal
Z,d_\Box)$ is established and, as a consequence, also the equivalence
between $t$-convergence and convergence in $d_{\Box}$.  In particular,
they prove that for every convergent sequence of permutations
$(\sigma_n)_{n\to\infty}$ there is a permuton $\mu\in\mathcal Z$ such
that $t(\tau,\sigma_n)\to t(\tau,\mu)$ for all $\tau\in\mathfrak
S$. The goal of this section is to give a new proof of these two
results by using a more direct approach based on
Theorem~\ref{thm:limits} and the permuton analogue of Proposition~\ref{prop:tconvboxconv} based on Bernstein polynomials.

We start  with  a permuton analogue of Lemma~\ref{lem:f=g}. 
\begin{lemma}\label{perm:poly}Let $\mu\in\mathcal Z$ be a permuton and let $i,j\in\mathbb N$. There exist a set $S_{i,j}$ of permutations of order $i+j+1$ and positive numbers $(C^{i,j}_\tau)_{\tau\in S_{i,j}}$ such that
  \[
  \int_{[0,1]^2} x^iy^j\diff\mu(x,y)=\sum_{\tau\in S_{i,j}}C^{i,j}_\tau t(\tau,\mu).
  \]
\end{lemma}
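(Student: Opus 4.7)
The plan is to mimic the proof of Lemma~\ref{lem:xFx}: the key new ingredient is that because $\mu$ has uniform marginals, Lebesgue integrals over dummy variables in $[0,1]$ can be rewritten as integrals against $\mu$. First I expand
\[ x^i=i!\int\vecone\{0<a_1<\dots<a_i<x\}\diff a_1\cdots\diff a_i,\qquad y^j=j!\int\vecone\{0<b_1<\dots<b_j<y\}\diff b_1\cdots\diff b_j, \]
and substitute into $\int x^iy^j\diff\mu(x,y)$. Since the first and second marginals of $\mu$ are both Lebesgue measure on $[0,1]$, each $\diff a_l$ may be replaced by $\diff\mu(a_l,a'_l)$ (integrating out the dummy second coordinate) and each $\diff b_m$ by $\diff\mu(b'_m,b_m)$. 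Interpreting the resulting integral probabilistically and relabelling the $k=i+j+1$ points as i.i.d.\ $\mu$-samples $(X_1,Y_1),\dots,(X_k,Y_k)$ gives
\[ \int_{[0,1]^2} x^iy^j\diff\mu(x,y)=i!\,j!\,\PP\bigl(X_1<\dots<X_i<X_k,\ Y_{i+1}<\dots<Y_{i+j}<Y_k\bigr). \]

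Next I unpack this probability via the random sub-permutation $\tau=\sub(k,\mu)$, which is almost surely well defined because the marginals of $\mu$ are continuous, so the $k$ samples a.s.\ have pairwise distinct first and second coordinates. By exchangeability of the i.i.d.\ samples, conditional on the unordered multiset of points (and hence on $\tau$), the assignment of labels $1,\dots,k$ to the $x$-sorted positions is uniform among the $k!$ bijections. Letting $N(\tau)$ denote the number of such labelings for which the event above holds yields
\[ \int_{[0,1]^2} x^iy^j\diff\mu(x,y) = \sum_{\tau\in\mathfrak S_k}\frac{i!\,j!\,N(\tau)}{k!}\,t(\tau,\mu). \]
Setting $S_{i,j}=\{\tau\in\mathfrak S_k:N(\tau)>0\}$ and $C^{i,j}_\tau=i!\,j!\,N(\tau)/k!$ then delivers the lemma.

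The main obstacle is the combinatorial bookkeeping to show $N(\tau)$ is a well-defined nonnegative integer and that $S_{i,j}$ is nonempty, so that the representation is nontrivial. Identifying labels with $x$-sorted positions, $N(\tau)$ counts permutations $\sigma\in\mathfrak S_k$ with $\sigma(1)<\dots<\sigma(i)<\sigma(k)$ and $\tau(\sigma(i+1))<\dots<\tau(\sigma(i+j))<\tau(\sigma(k))$: one chooses $r=\sigma(k)\in[k]$ and an $i$-subset $C=\{\sigma(1),\dots,\sigma(i)\}\subseteq\{1,\dots,r-1\}$, and the remaining positions $[k]\setminus(C\cup\{r\})$ are forced to have $\tau$-values below $\tau(r)$ (in the unique increasing order). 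Since the lemma requires only existence of positive $C^{i,j}_\tau$ on some $S_{i,j}$, no explicit formula beyond this count of binomial coefficients is needed; producing, say, the identity permutation (with $r=k$ and $C=\{1,\dots,i\}$) shows $S_{i,j}\neq\emptyset$.
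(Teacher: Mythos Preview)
Your argument is correct and follows essentially the same approach as the paper: both expand $x^i$ and $y^j$ as integrals over auxiliary variables, use the uniform marginals of $\mu$ to replace Lebesgue integration by $\mu$-integration, and then rewrite the resulting $(i{+}j{+}1)$-fold $\mu$-integral as a combination of sub-permutation densities. The only difference is presentational: the paper orders the variables at the end and tracks explicitly, via indicator functions $G_U$ and $H_S$, which of the remaining coordinates fall below or above $x$ (respectively $y$), whereas you order the variables from the start and package the combinatorics into the exchangeability/conditioning argument that yields the count $N(\tau)$. Your route is a bit more streamlined but otherwise equivalent.
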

We discovered that a similar result was proved by Glebov, Grzesik, Klimo\v{s}ov\'a and Kr\'al'~\cite[Theorem 3]{GGKK}. As the proofs are rather different 
we decide to include our proof here.
\begin{proof}
  We proceed as in the proof of Lemma~\ref{lem:f=g}.
  First, since $\mu$ has uniform marginals we have that
	\begin{align*}x^i &= \Big(\int_{[0,x]\times[0,1]}\diff \mu(x',y')\Big)^i=\int_{[0,1]^{2i}} \mathbf{1}\{x_1,\dots,x_i\le x\}\diff\mu(x_1,y_1)\dots\diff\mu(x_i,y_i)
        \end{align*}
	and similarly
	\[y^j
        =\int_{[0,1]^{2j}} \mathbf{1}\{y_{i+1},\dots,y_{i+j}\le y\}\diff\mu(x_{i+1},y_{i+1})\dots\diff\mu(x_{i+j},y_{i+j}). \]
	Whence, setting
\[G_{U}(\vec x,x)=\mathbf{1}\{x_1,\dots,x_i\le x\}\prod_{u\in U}\mathbf{1}\{x_{i+u}\le x\}\prod_{u\not\in U}\mathbf{1}\{x< x_{i+u}\}
  \]
and
\[
H_S(\vec y,y)=\mathbf{1}\{y_{i+1},\dots,y_{i+j}\le y\}\prod_{s\in S}\mathbf{1}\{y_{s}\le y\}\prod_{s\not\in S}\mathbf{1}\{y< y_{s}\}{,}
  \]
by the Fubini--Tonelli theorem, we have
	\begin{align*}
          x^iy^j &= \int_{[0,1]^{2(i+j)}}\mathbf 1\{x_1,\dots,x_i\le x\}\mathbf 1\{y_{i+1},\dots,y_{i+j}\le y\}\diff \mu^{(i+j)}(\vec x,\vec y)\\
	&=\sum_{U\subseteq [j]}\sum_{S\subseteq [i]}\int_{[0,1]^{2(i+j)}}G_{U}(\vec x,x)H_S(\vec y,y)\diff \mu^{(i+j)}(\vec x,\vec y).
\end{align*}
Finally, by reordering the position of the coordinates below and above $x$, respectively, we have
\begin{align*}
  \int_{[0,1]^2}x^iy^j\diff\mu(x,y) 
  & =\sum_{k\in[j]}\sum_{\ell\in[i]}\binom jk\binom i\ell\frac{(i+k)!(j-k)!}{(i+j+1)!}\sum_{\sigma\in\mathfrak S_{i+j+1}: \sigma(i+k+1)\geq j+1}t(\sigma,\mu),
\end{align*}
{where given distinct values $x_1,...,x_{i+j}\in [0,1]$
  and a $k$ element set $U\subseteq [j]$
  the factor $(i+k)!$ represents
  all the possible orderings
  of the $(i+k)$ element set
  $\{x_m : m\in [i] \vee m-i\in U\}$, the factor $(j-k)!$ represents
  all possible orderings
  of the $(j-k)$ element set $\{x_m : m\in [j]\setminus U\}$, and the
  $(i+j+1)!$ term in the denominator comes from the definition of
  $t(\sigma,\mu)$ for $\sigma\in\mathfrak S_{i+j+1}$.}
\end{proof}
As pointed out in~\cite{KP13}, {the previous result} can be used to prove the uniqueness of the limit of a sequence of permutations as we did for limits of words by using Lemma~\ref{lem:f=g}. Indeed, suppose that $\mu,\nu\in\mathcal Z$ are two permutons such that $t(\sigma,\mu)=t(\sigma,\nu)$ for every finite permutation $\sigma\in\mathfrak S$. By Lemma~\ref{perm:poly} and the Stone--Weierstrass theorem we deduce that for every continuous function $h:[0,1]^2\to\mathbb R$ we have
\[\int_{[0,1]^2}h(x,y)\diff\mu(x,y)=\int_{[0,1]^2}h(x,y)\diff\nu(x,y),\]
which implies that $\mu=\nu$. On the other hand, Lemma~\ref{perm:poly} can also be used to establish the permuton analogue of Proposition~\ref{prop:tconvboxconv}, that $t$-convergence implies the convergence with respect to $d_\Box$.
\begin{proposition}\label{perm:cauchy}If $(\mu_n)_{n\to\infty}$ is a sequence in $\mathcal Z$ which is $t$-convergent, then it is a Cauchy sequence with respect to $d_\Box$. Moreover, if  $\mu_n\stackrel{t}\to \mu$ for some $\mu\in\mathcal Z$, then $\mu_n\stackrel{\Box}\to \mu.$\end{proposition}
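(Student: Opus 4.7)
The plan is to mirror the proof of Proposition~\ref{prop:tconvboxconv} but in two dimensions, replacing the role of Lemma~\ref{lem:f=g} by Lemma~\ref{perm:poly} and exploiting the uniform marginals of permutons to control the boundary error of the Bernstein approximation. The first step is to reduce $d_\Box$ to corner-anchored rectangles: for any rectangle $I\times J=[a_1,a_2]\times[b_1,b_2]$, inclusion–exclusion gives
\[
\mu_n(I\times J)-\mu_m(I\times J)=\sum_{\epsilon\in\{0,1\}^2}(-1)^{\epsilon_1+\epsilon_2}\big(\mu_n-\mu_m\big)\big([0,a_{\epsilon_1+1}]\times[0,b_{\epsilon_2+1}]\big),
\]
so it suffices to bound $\sup_{a,b\in[0,1]}\big|(\mu_n-\mu_m)([0,a]\times[0,b])\big|$.

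Given $\varepsilon>0$, I would fix $r=\lceil(40/\varepsilon)^4\rceil$ and $\delta=\varepsilon/2^{3r+3}$, and use $t$-convergence to choose $n_0$ so that $|t(\tau,\mu_n)-t(\tau,\mu_m)|\le\delta$ for all $n,m\ge n_0$ and all permutations $\tau$ of order at most $2r+1$. By Lemma~\ref{perm:poly}, this implies $\big|\int x^iy^j\,d(\mu_n-\mu_m)\big|\le C_{i,j}\delta$ for all $i+j\le 2r$, where $C_{i,j}=\sum_{\tau\in S_{i,j}}C^{i,j}_\tau$ is an absolute constant. For $a,b\in[0,1]$ let $J_{a,b}=\mathbf 1_{[0,a]\times[0,b]}$ and consider its bivariate Bernstein polynomial $B_{r,J_{a,b}}(x,y)$ of degree $r$ in each variable. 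Since each coefficient of $B_{r,J_{a,b}}$ is bounded in absolute value by $\binom{r}{i}\binom{r}{j}\le 2^{2r}$, the previous bound yields
\[
\Big|\int B_{r,J_{a,b}}\,d(\mu_n-\mu_m)\Big|\le 2^{2r}(r+1)^2\max_{i+j\le 2r}C_{i,j}\,\delta\le 2^{3r}\delta.
\]

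To replace $B_{r,J_{a,b}}$ by $J_{a,b}$ inside the integral, split $[0,1]^2$ into the good set where $|x-a|>r^{-1/4}$ and $|y-b|>r^{-1/4}$, and the remainder $E_r$. On the good set, Lemma~\ref{lem:Bta2} with $k=2$ gives $|J_{a,b}-B_{r,J_{a,b}}|\le 2r^{-1/2}$. Since $\mu_n$ and $\mu_m$ have uniform marginals, the strip $\{|x-a|\le r^{-1/4}\}\cup\{|y-b|\le r^{-1/4}\}$ has measure at most $4r^{-1/4}$ under each, and on this strip $|J_{a,b}-B_{r,J_{a,b}}|\le 2$. Hence
\[
\Big|\int(J_{a,b}-B_{r,J_{a,b}})\,d\mu_n\Big|+\Big|\int(J_{a,b}-B_{r,J_{a,b}})\,d\mu_m\Big|\le 4r^{-1/2}+16r^{-1/4}\le 20r^{-1/4}.
\]
Combining both estimates and the inclusion–exclusion reduction, $d_\Box(\mu_n,\mu_m)\le 4\big(2^{3r}\delta+20r^{-1/4}\big)\le\varepsilon$ by the choice of $r$ and $\delta$, proving the Cauchy property. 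The "moreover" statement follows by replacing $\mu_m$ by $\mu$ throughout.

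The only delicate point is the treatment of $E_r$: in the word setting one uses that the Lebesgue measure of the bad region is $O(r^{-1/4})$, and here the analogous estimate relies precisely on the uniform marginal property that defines permutons, so the same strategy transfers without genuine new obstacles.
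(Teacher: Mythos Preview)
Your approach is exactly the paper's: reduce $d_\Box$ to corner boxes, use Lemma~\ref{perm:poly} to turn moments $\int x^iy^j\,d\mu$ into linear combinations of sub-permutation densities, approximate $\mathbf 1_{[0,a]\times[0,b]}$ by a bivariate Bernstein polynomial via Lemma~\ref{lem:Bta2}, and exploit the uniform marginals to bound the bad strips. Structurally there is nothing to add.

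There is, however, a genuine quantitative slip in the bookkeeping. The constants $C_{i,j}=\sum_{\tau\in S_{i,j}}C^{i,j}_\tau$ from Lemma~\ref{perm:poly} are \emph{not} absolute: $S_{i,j}\subseteq\mathfrak S_{i+j+1}$, and the explicit formula in that lemma shows the total weight grows factorially in $i+j$; for $i,j\le r$ one has $C_{i,j}$ of order $(2r)!$. Your choice $\delta=\varepsilon/2^{3r+3}$ therefore does not absorb $\max_{i+j\le 2r}C_{i,j}$, and the asserted inequality $2^{2r}(r+1)^2\max C_{i,j}\,\delta\le 2^{3r}\delta$ is false. The paper fixes this by taking $\delta=\varepsilon/\big(C(2r+1)!\,2^{4r+3}\big)$ with $C=\max\{C^{i,j}_\tau:\tau\in S_{i,j},\,i,j\le r\}$. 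A second, smaller point: your bound ``each coefficient of $B_{r,J_{a,b}}$ is at most $\binom{r}{i}\binom{r}{j}$'' refers to coefficients in the Bernstein basis, but the moment estimate applies to monomials $x^py^q$; expanding $(1-x)^{r-i}(1-y)^{r-j}$ costs an extra factor $2^{2r}$, which is why the paper uses $2^{4r}$ rather than $2^{2r}(r+1)^2$. Both issues are repaired simply by adjusting $\delta$; they do not affect the strategy.
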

\begin{proof}Let $\eps>0$ be fixed and let $r=\lceil (80/\eps)^4\rceil$.
  Let $S_{i,j}\subseteq \mathfrak S_{i+j+1}$ and
{$C^{i,j}_{\tau}$} be as in the statement of
  Lemma~\ref{perm:poly}, define
  $C=\max \{C^{i,j}_{\tau}\colon\tau\in S_{i,j}, i,j\le r\}$,
  and let 
	\[\delta=\frac{\eps}{C(2r+1)!2^{4r+3}}.\]
	Let $n_0$ be sufficiently large so that for all $n,m\ge n_0$ we have 
	\begin{equation}\label{eq:perm:cauchy}|t(\tau,\mu_n)-t(\tau,\mu_m)|\le \delta \quad \text{ for all }\tau\in \bigcup_{i\in [r]} \mathfrak S_{i}.
	\end{equation}
        Hence, for $i,j\le r$ and $\nu=\mu_n-\mu_m$, by Lemma~\ref{perm:poly}
        {and since $|\mathfrak S_{i+j+1}|\leq (2r+1)!$}, we have 
\[\Big|\int_{[0,1]^2}x^iy^j\diff\nu(x,y)\Big|=\Big|\sum_{\tau\in S_{i,j}}{C^{i,j}_{\tau}} (t(\tau,\mu_n)-t(\tau,\mu_m))\Big|\le C(2r+1)!\delta. \]
For $a,b\in[0,1]$, let $J_{a,b}=\mathbf 1_{[0,a]\times[0,b]}$ and let $j_a,j_b$ be the largest {integers} such that $\frac{j_a}r\le a$ and $\frac{j_b}r\le b$.
Recall that the Bernstein polynomial of $J_{a,b}$ is denoted by $B_{r,J_{a,b}}$
  and observe that
\begin{align*}
  \Big|\int B_{r,J_{a,b}}(x,y)\diff\nu(x,y)\Big|
  & \le \sum_{i=0}^{j_a}\sum_{j=0}^{j_b}\tbinom{r}{i}\tbinom{r}{j}\Big|\int x^i(1-x)^{r-i}y^j(1-y)^{r-j}\diff \nu(x,y)\Big|\\
  & \le \sum_{0\le i,j\le r}\sum_{k=0}^{r-i}\sum_{\ell=0}^{r-j}\tbinom{r}{i}\tbinom{r}{j}\tbinom{r-i}{k}\tbinom{r-j}{\ell}\Big|\int x^{i+k}y^{j+\ell}\diff\nu(x,y)\Big|\\
  & \le C2^{4r}(2r+1)!\delta.
\end{align*}
Now, by Lemma~\ref{lem:Bta2} we have
\begin{align*}
  \left|\nu([0,a]\times[0,b])\right|&=\left|\int \mathbf 1_{[0,a]\times [0,b]}(x,y)\diff\nu(x,y)\right| \\
	& \le \left|\int B_{r,J_{a,b}}(x,y)\diff\nu(x,y)\right|+\left|\int (\mathbf 1_{[0,a]\times [0,b]}(x,y)-B_{r,J_{a,b}}(x,y))\diff\nu(x,y)\right|\\
	& \leq C{2^{4r}}(2r+1)!\delta + (8r^{-1/4}+2r^{-1/2}),
	\end{align*}
where the last inequality follows since $\mu_n$ and $\mu_m$ have
uniform marginals.
Putting everything together, by our choice of {$r$, $\delta$ and $\nu$}, we have 
\[
  d_\Box(\mu_n,\mu_m) \le 4\sup_{a,b\in[0,1]}|\nu([0,a]\times[0,b])| 
  \le C{2^{4r+2}}(2r+1)!\delta + 40r^{-1/4} \leq \eps.
\]
For the second part just replace $\mu_m$ by $\mu$ in~\eqref{eq:perm:cauchy} and choose $\nu=\mu_n-\mu$. Then, repeat the above argument.
\end{proof}

We can now give the alternative proof of the result of Hoppen et al~\cite{HOPPEN} concerning the existence of a limit (permuton)
for a convergent permutation sequence. Note that this limit is unique as discussed right after the proof of Lemma~\ref{perm:poly}.
  \begin{theorem}[Hoppen et al.~{\cite[Theorem~1.6]{HOPPEN}}]\label{thm:hoppen}
    For every convergent 
    sequence of permutations $(\sigma_n)_{n\to\infty}$ there exists a permuton
    $\mu\in\mathcal Z$ such that $\sigma_n\stackrel{t}{\to} \mu$. 
\end{theorem}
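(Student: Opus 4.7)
The plan is to exploit Theorem~\ref{thm:limits} (existence of word limits) to construct the permuton directly from word-level data extracted from the $\sigma_n$'s. First, given a convergent permutation sequence $(\sigma_n)_{n\to\infty}$, I would pass to the associated permutons $\mu_n=\mu_{\sigma_n}$. By~\eqref{perm:eq1} the convergence of $(t(\tau,\sigma_n))_{n\to\infty}$ for every $\tau\in\mathfrak{S}$ implies that $(\mu_n)_{n\to\infty}$ is $t$-convergent, and Proposition~\ref{perm:cauchy} then yields that $(\mu_n)_{n\to\infty}$ is a Cauchy sequence in $d_\Box$. These two facts together will let me identify any candidate limit $\mu$ by a pointwise density computation on rectangles.

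To exhibit an explicit limit, for each rational $a\in [0,1]\cap\mathbb{Q}$ I would associate to $\sigma_n$ the word $\bw_n^a\in\{0,1\}^n$ defined by $(\bw_n^a)_i=\mathbf{1}\{\sigma_n(i)\le \lceil an\rceil\}$. A short combinatorial computation (essentially the argument behind~\eqref{perm:eq1}) shows that for every $\bu\in\{0,1\}^\ell$ one has
\[
t(\bu,\bw_n^a)=\sum_{\tau}t(\tau,\sigma_n)+O(1/n),
\]
where the sum ranges over the $\tau\in\mathfrak{S}_\ell$ whose image pattern matches $\bu$ at threshold $a$. Hence $(\bw_n^a)_{n\to\infty}$ is a convergent word sequence for each such $a$, and Theorem~\ref{thm:limits} produces a limit $f_a\in\cW$. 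Since $a\mapsto (\bw_n^a)_i$ is monotone nondecreasing, I can pass to right-continuous pointwise limits in $a$ over rationals to obtain a jointly measurable function $(x,a)\mapsto f_a(x)$ on $[0,1]^2$.

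Next, I would define $\mu$ on rectangles by $\mu([0,x]\times[0,a])=\int_0^x f_a(t)\,\diff t$ and extend to a Borel measure by Carath\'eodory's theorem. The two uniform-marginal conditions drop out of the construction: $\bw_n^1$ is the all-ones word, so $f_1\equiv 1$ a.e.\ and $\mu([0,x]\times[0,1])=x$; dually, $\|\bw_n^a\|_1=\lceil an\rceil$ forces $\int_0^1 f_a=a$ and hence $\mu([0,1]\times[0,a])=a$. This shows $\mu\in\mathcal{Z}$. The proof that $t(\tau,\sigma_n)\to t(\tau,\mu)$ then proceeds by expressing $t(\tau,\mu)$ as an integral against finite products of the $f_a$'s (first for a partitioning $\tau$ into threshold levels by rational $a$'s, then in the limit by monotone convergence), and matching this with the corresponding limit of $t(\tau,\mu_n)$ via~\eqref{perm:eq1} and Lemma~\ref{perm:lipschitz}; uniqueness of the resulting limit follows from Lemma~\ref{perm:poly} applied as right after its proof.

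The main obstacle lies in the joint measurability step: each $f_a$ is only defined modulo a null set, so assembling the countable family $\{f_a : a\in\mathbb{Q}\cap[0,1]\}$ into a function measurable in both variables and producing a bona-fide measure (rather than only a consistent family of marginals at rational thresholds) is the delicate part. Once the right-continuous, monotone-in-$a$ version is fixed, uniform marginals and the density identity for $\mu$ reduce to standard measure-theoretic bookkeeping, and the final $t$-convergence of $\sigma_n$ to $\mu$ is then a routine consequence of the word-limit convergence encoded level by level in the $f_a$'s.
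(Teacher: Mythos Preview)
Your overall architecture coincides with the paper's: extract, for each threshold level $a$, a binary word from $\sigma_n$, use the word-limit machinery to get a family $(f_a)_a$, build the set function $\mu([0,x]\times[0,a])=\int_0^x f_a$, extend by Carath\'eodory, check the marginals, and finish via Lemma~\ref{perm:lipschitz}. The paper does exactly this (with the roles of the two coordinates exchanged), so the scaffolding is correct.

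The genuine gap is the step you rely on to show that $(\bw_n^a)_{n\to\infty}$ is a convergent word sequence. The displayed identity
\[
t(\bu,\bw_n^a)=\sum_{\tau} t(\tau,\sigma_n)+O(1/n),
\]
with the sum over $\tau\in\mathfrak S_\ell$ ``whose image pattern matches $\bu$ at threshold $a$'', is false in general: knowing only the relative order pattern $\tau$ of $\sigma_n(i_1),\dots,\sigma_n(i_\ell)$ does not determine how many of these values lie below $\lceil an\rceil$. Concretely, take $\ell=2$, $a=\tfrac12$, $\bu=(1,0)$. For the permutation $\sigma_n(i)=i$ on $[n/2]$ and $\sigma_n(i)=\tfrac{3n}{2}+1-i$ on $(n/2,n]$ one has $\bw_n^{1/2}=(1^{n/2}0^{n/2})$, hence $t((1,0),\bw_n^{1/2})\to\tfrac12$, while $t((1\,2),\sigma_n)\to\tfrac34$ and $t((2\,1),\sigma_n)\to\tfrac14$; no fixed linear combination of the two pattern densities reproduces $\tfrac12$ and simultaneously reproduces the value $\tfrac12$ for the identity permutation (where $t((1\,2),\cdot)=1$) and $0$ for its reverse. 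So $t(\bu,\bw_n^a)$ is \emph{not} a function of $(t(\tau,\sigma_n))_{\tau\in\mathfrak S_\ell}$, and your deduction of word $t$-convergence from permutation $t$-convergence breaks down.

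The fix is already present in your own outline: you established that $(\mu_n)_n$ is Cauchy in $d_\Box$ via Proposition~\ref{perm:cauchy}. Use this directly. If $f_{n,a}$ denotes the step function associated to $\bw_n^a$, then $\int_0^x f_{n,a}(t)\,\diff t=\mu_n([0,x]\times[0,a])+O(1/n)$, so by~\eqref{bound:distribution}
\[
d_\Box(f_{n,a},f_{m,a})\le 2\sup_{x\in[0,1]}\big|\mu_n([0,x]\times[0,a])-\mu_m([0,x]\times[0,a])\big|+O(1/n)\le 2\,d_\Box(\mu_n,\mu_m)+O(1/n),
\]
and compactness of $(\cW,d_\Box)$ (Theorem~\ref{cor:compact}) yields a limit $f_a\in\cW$. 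This is precisely what the paper does, and it has the bonus that the bound is uniform in $a$; hence one may (as the paper does) take \emph{all} $a\in[0,1]$ at once rather than restricting to rationals, which dissolves the joint-measurability worry you flagged. The final convergence $t(\tau,\sigma_n)\to t(\tau,\mu)$ then follows from $d_\Box(\mu_n,\mu)\to 0$ together with Lemma~\ref{perm:lipschitz}, which is again the paper's route.
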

  \begin{proof}Let $(\sigma_n)_{n\to\infty}$ be given and let $(\mu_n)_{n\to\infty}$ be the sequence of corresponding permutons. Given $x\in[0,1]$ and $n\in\mathbb N$, we define
    \[
    f_{n,x}(y)=\int_{0}^xn\mathbf{1}\{\sigma_n(\lceil nt\rceil)=\lceil ny\rceil\}\diff t \hspace{.5cm}\text{for all $y\in [0,1]$.}
    \]
    It is easy to see that 
	\begin{enumerate}[(i)]
		\item\label{permuton:1} $f_{n,x}(\cdot)\le f_{n,x'}(\cdot)$ a.e. for all $x\le x'$,
		
		\item\label{permuton:2} $f_{n,0}(\cdot)=0$ a.e. for all $n\in\mathbb N$, and
		\item\label{permuton:3} $f_{n,1}(\cdot)=1$ a.e. for all $n\in\mathbb N$.
	\end{enumerate}
	We claim that $(f_{n,x})_{n\to\infty}$ converges {in $d_{\Box}$} for all $x\in[0,1]$. Indeed, by Proposition~\ref{perm:cauchy}, $(\mu_n)_{n\to\infty}$ is a Cauchy sequence with respect to $d_\Box$, and for every interval $I\subseteq [0,1]$
	\[\Big|\int_{I}(f_{n,x}-f_{m,x})(t)\diff t\Big|=\big|\mu_{n}([0,x]\times I)-\mu_{m}([0,x]\times I)\big|\le d_\Box(\mu_n,\mu_m).\]
Thus $(f_{n,x})_{n\to\infty}$ is a Cauchy sequence in $(\mathcal W,d_\Box)$ and therefore{, by Theorem~\ref{cor:compact},} it has a limit $f_x\in \mathcal W$. Furthermore, {by the dominated convergence theorem,} for all $x\in[0,1]$ we have  
	\begin{equation}\label{permuton:marginal}\int_0^1f_x(t)\diff t=\lim_{n\to\infty}\int_{0}^1f_{n,x}(t)\diff t=\lim_{n\to\infty}\frac{\lceil{nx}\rceil}{n}=x\end{equation}
and, because of~\eqref{permuton:1}, for all $a,x,x'\in [0,1]$,
	\begin{equation}\label{permuton:cont} \Big|\int_0^a f_{x}(t)\diff t-\int_0^a f_{x'}(t)\diff t\Big|\le \Big|\int_{0}^{1}(f_x-f_{x'})(t)\diff t\Big|= |x-x'|.\end{equation}
Given $0\le a<b\le 1$ and $0\le c<d\le 1$, we set 
	\begin{equation}\label{permuton:mu}\tilde\mu([a,b)\times[c,d))=\int_{c}^df_{[a,b)}(t)\diff t,\end{equation}
	where $f_{[a,b)}(t)=(f_b-f_a)(t)$ for all $t\in[0,1]$. We also set $\tilde{\mu}([a,b)\times [c,1])=\tilde{\mu}([a,b)\times [c,1))$ and $\tilde{\mu}([a,1]\times [c,d))=\tilde{\mu}([a,1)\times [c,d))$ for all $0\le a<b\le 1$ and $0\le c<d\le 1$.
	
We claim that $\tilde\mu$ extends to a unique measure $\mu\in\mathcal Z$.  Let $\mathcal F$ be the semiring on $[0,1]\times [0,1]$ consisting of boxes of the form $[a,b)\times [c,d)$, $[a,1]\times [c,d)$, $[a,b)\times [c,1]$ and $[a,1]\times [c,1]$, for $0\le a<b\le 1$ and $0\le c<d\le 1$.
Due to~\eqref{permuton:1} and~\eqref{permuton:2} 
we see that $\tilde\mu\ge 0$ and that $\tilde\mu$ is monotone on $\mathcal F$. Moreover, it is clear that $\tilde\mu$ is finitely additive since $f_x(t)$ is integrable for all $x\in [0,1]$ and in the following we show that $\tilde\mu$ is $\sigma$-additive on $\mathcal F$. Let $(I_n\times J_n)_{n\in\mathbb N}$ be a sequence of pairwise disjoint boxes such that $\bigcup_{n\in\mathbb N}I_n\times J_n=I\times J\in\mathcal F$. Without loss of generality, we assume that $I_n=[a_n,b_n)$ and $J_n=[c_n,d_n)$ for all $n\in\mathbb N$. Since $\tilde\mu$ is monotone we have  $\sum_{i=0}^n\tilde\mu(I_i\times J_i)\le \tilde\mu(I\times J)$ for all $n\in\mathbb N$
and thus $\sum_{n\in\mathbb N}\tilde\mu(I_n\times J_n)\le \tilde\mu(I\times J)$. In order to prove the upper bound, let $\delta>0$ be arbitrary and define $I'_n=[a_n-2^{-n}\delta,b_n+2^{-n}\delta)\cap[0,1]$ and $J'_n=[c_n-2^{-n}\delta,d_n+2^{-n}\delta)\cap [0,1]$ for each $n\in\mathbb N$. Note that the closure of ${I\times J}$ is contained in $\bigcup_{n\in\mathbb N}I'_n\times J_n'$ and thus, as $[0,1]\times [0,1]$ is compact, there exists a finite covering $I\times J\subseteq (I'_{n_1}\times J'_{n_1})\cup \dots\cup (I'_{n_\ell}\times J'_{n_\ell})$. Observe that 
\[\tilde\mu(I'_{n_i}\times J'_{n_i})-\tilde\mu(I_{n_i}\times J_{n_i})=\displaystyle\int_{I'_{n_i}\setminus I_{n_i}}f_{J'_{n_i}}(t)\diff t+\int_{ I_{n_i}}(f_{J'_{n_i}}-f_{J_{n_i}})(t)\diff t
	\le 2\cdot 2^{-n_i}\delta+2\cdot 2^{-n_i}\delta,\]
where the inequality is due to $\|f_{J'_{n_i}}\|_{\infty}\leq 1$ and~\eqref{permuton:cont}. Then we have
\[\tilde\mu(I\times J)\le \sum_{i=0}^\ell \tilde\mu(I'_{n_i}\times J'_{n_i})\le \sum_{n\in\mathbb N}(\tilde\mu(I_{n}\times J_{n})+4\cdot 2^{-n}\delta)\le\sum_{n\in\mathbb N}\tilde\mu(I_{n}\times J_{n}) +4\delta, \]
which implies $\tilde\mu(I\times J)=\sum_{n\in\mathbb N}\tilde\mu(I_{n}\times J_{n})$ as $\delta>0$ was arbitrary. Therefore $\tilde\mu$ is a pre-measure on $\mathcal F$ and thus there exists a measure $\mu$ on the Borel sets extending $\tilde\mu$ (see Theorem 11.3 from~\cite{Billingsley}). Moreover, since $\tilde\mu$ is finite, it follows that $\mu$ is unique (see Theorem 10.3 from~\cite{Billingsley}). Finally, due to \eqref{permuton:3} we have  $f_{1}(\cdot)=1$ a.e. which, together with~\eqref{permuton:marginal}, imply that $\mu$ has uniform marginals and therefore $\mu\in\mathcal Z$.

        To conclude that $\sigma_n\stackrel{t}{\to} \mu$, we note that by Lemma~\ref{perm:lipschitz} it is enough to show that $d_\Box(\sigma_n,\mu)\to 0$. If not, then
        there is an $\eps >0$ and sequences $(x_n)_{n\to\infty}$ and $(a_n)_{n\to\infty}$ such that, without loss of generality, for all $n$ sufficiently large  we have
\[ \int_{0}^{a_n}f_{n,x_n}(t)\diff t \ge \mu([0,x_n)\times [0,a_n]) +\eps
  =\int_0^{a_n}f_{x_n}(t)\diff t+\eps.\]
Moreover, by compactness of $[0,1]$ we can find $a\in [0,1]$ such that (passing to a subsequence) $(a_n)_{n\to\infty}$ converges to $a$ and for all $n$ sufficiently large we have
\[
{\Big|\int_{0}^{a}f_{n,x_n}(t)\diff t
- \int_{0}^{a_n}f_{n,x_n}(t)\diff t\Big|\,,\,
\Big|\int_{0}^{a_n}f_{x_n}(t)\diff t
- \int_{0}^{a}f_{x_n}(t)\diff t\Big|
\leq |a-a_n|\leq \frac{\eps}{8}.}
\]
{Thus,}
\[
{\int_{0}^{a}f_{n,x_n}(t)\diff t-\int_0^{a}f_{x_n}(t)\diff t\geq \frac{3\eps}{4}}
\]
{Again by compactness, there exists an $x\in [0,1]$ such that
  (passing to a subsequence) $(x_n)_{n\to\infty}$ converges to $x$ and,
  by~\eqref{permuton:cont} applied with $x'=x_{n}$,
  for all $n$ sufficiently large we can assume that $\int_{0}^{a}(f_x-f_{x_n})(t)\diff t\leq\frac{\eps}{8}$.
Finally, observing that $\int_0^a(f_{n,x}-f_{n,x_n})(t)\diff t\leq |x-x_n|$, taking $n$ sufficiently large so that $|x-x_n|\leq\frac{\eps}{8}$ we conclude that}
\[
  \int_{0}^{a}f_{n,x}(t)\diff t\ge \int_0^af_x(t)\diff t+\frac\eps2,
\]
contradicting the fact that $(f_{n,x})_{n\to\infty}$ converges to $f_x$.
\end{proof}

\section{Extensions}\label{sec:extensions}
In this section we consider two generalizations of our limit theory for binary words.
First, to non-binary words, and then to higher dimensional array structures.

\subsection{Non-binary words.}
Let $\Sigma$ be a finite alphabet.  For a word $\bw\in\Sigma^n$ and
an interval $I\subseteq[n]$ let $N_a(\bw, I)$ denote the number of
occurrences of $a\in\Sigma$ in $\sub(I,\bw)$ and let
$N_a(\bw)=N_a(\bw,[n])$.  Moreover, as for the binary alphabet case,
denote by $\binom\bw\bu$ the number of subsequences of $\bw$ which
coincide with $\bu$ {and, assuming the length of $\bu$ is $\ell$, let $t(\bu,\bw)$ be the probability that a randomly chosen $\ell$-subsequence of $\bw$ yields a copy of $\bu$.}
A sequence $(\bw_n)_{n\to\infty}$ of words $\bw_n\in\Sigma^n$ is
called $o(1)$-uniform if for each $a\in\Sigma$ there is a density $d_a$
such that $N_a(\bw_n, I)= d_a|I|+o(1)n$ holds for each interval
$I\subseteq [n]$.
We obtain the following analogue (generalization) of Theorem~\ref{thm:quasirandom} for finite size alphabets.
\begin{theorem}\label{thm:extensCount}
Given a sequence $(\bw_n)_{n\to\infty}$ of words $\bw_n\in\Sigma^n$
over the finite size alphabet~$\Sigma$. If $(\bw_n)_{n\to\infty}$ is
$o(1)$-uniform, then {for each $a\in\Sigma$ there is a density $d_a\in [0,1]$} such that for every
$\ell\in\NN$ and every word $\bu\in\Sigma^\ell$ we have
{$\tbinom{\bw_n}\bu=\prod_{a\in\Sigma}d_a^{N_a(\bu)}\binom
  n\ell+o(n^\ell)$}.
Conversely, if for some {collection
of densities $\{d_a\in [0,1] : a\in\Sigma\}$}
we have {$\tbinom{\bw_n}\bu=\prod_{a\in\Sigma}d_a^{N_a(\bu)}\binom n3+o(n^3)$} for all words $\bu\in\Sigma^3$, then $(\bw_n)_{n\to\infty}$
is $o(1)$-uniform.
\end{theorem}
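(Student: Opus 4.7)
The plan is to reduce both directions to the binary case by a letter-by-letter ``projection'' construction. For each letter $a\in\Sigma$ and each word $\bw\in\Sigma^n$, define the binary word $\bw^{(a)}\in\{0,1\}^n$ by setting $w^{(a)}_i=1$ if $w_i=a$ and $w^{(a)}_i=0$ otherwise. Then $N_a(\bw,I)$ equals the number of $1$'s in the subword of $\bw^{(a)}$ indexed by $I$, so $o(1)$-uniformity of $(\bw_n)_{n\to\infty}$ with densities $(d_a)_{a\in\Sigma}$ is exactly the statement that, for every $a\in\Sigma$, the binary sequence $(\bw_n^{(a)})_{n\to\infty}$ is $(d_a,o(1))$-uniform.

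For the forward direction, I would invoke Remark~\ref{remark:alphabetcounting}. Associate to $\bw_n$ the tuple $\vecf_n=(f_{\bw_n^{(a)}})_{a\in\Sigma}$ of step functions, and compare it to the constant tuple $\vecg=(d_a)_{a\in\Sigma}$; note that in both cases the coordinates sum to $1$ a.e., since each position of $\bw_n$ carries exactly one letter and $\sum_a d_a = 1$. Uniformity of $(\bw_n^{(a)})_{n\to\infty}$ yields $d_\Box(f_{\bw_n^{(a)}},d_a)=o(1)$ for each $a$, so Remark~\ref{remark:alphabetcounting} gives $|t(\bu,\vecf_n)-t(\bu,\vecg)|=o(1)$ for every $\bu\in\Sigma^\ell$. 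A direct computation shows
\[
t(\bu,\vecg)=\ell!\int_{0\le x_1<\dots<x_\ell\le 1}\prod_{i\in[\ell]}d_{u_i}\diff x_1\dots\diff x_\ell=\prod_{a\in\Sigma}d_a^{N_a(\bu)}.
\]
Finally, the obvious analog of~\eqref{eq:countcount} for non-binary alphabets (same proof, conditioning on the event that $\ell$ uniform points of $[0,1]$ land in distinct $n$-th intervals) gives $t(\bu,\vecf_n)=t(\bu,\bw_n)+O(n^{-1})$, whence $\tbinom{\bw_n}{\bu}=\prod_a d_a^{N_a(\bu)}\tbinom{n}{\ell}+o(n^\ell)$.

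For the converse, I would count the length-$3$ subsequences of each projection $\bw_n^{(a)}$ in terms of the counts in $\bw_n$. For $\bu'\in\{0,1\}^3$ with $k=\|\bu'\|_1$,
\[
\tbinom{\bw_n^{(a)}}{\bu'}=\sum_{\bu}\tbinom{\bw_n}{\bu},
\]
where the sum ranges over those $\bu\in\Sigma^3$ with $u_i=a$ exactly at the positions $i$ with $u'_i=1$ and $u_i\in\Sigma\setminus\{a\}$ otherwise. Substituting the hypothesis and using $\sum_{b\ne a}d_b=1-d_a$ yields
\[
\tbinom{\bw_n^{(a)}}{\bu'}=d_a^{k}(1-d_a)^{3-k}\tbinom{n}{3}+o(n^3).
\]
The second part of Theorem~\ref{thm:quasirandom} then shows that $(\bw_n^{(a)})_{n\to\infty}$ is $(d_a,o(1))$-uniform for every $a\in\Sigma$, which by the initial observation is the same as $(\bw_n)_{n\to\infty}$ being $o(1)$-uniform. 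Neither direction presents a genuine obstacle: once the projection construction is in place, the forward direction is a straightforward application of Remark~\ref{remark:alphabetcounting} and the converse is a bookkeeping computation combined with a black-box use of the binary case.
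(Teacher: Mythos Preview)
Your proposal is correct and follows essentially the same approach as the paper: both directions are reduced to the binary case via the letter-by-letter projection $\bw\mapsto\bw^{(a)}$, with the forward direction handled by Remark~\ref{remark:alphabetcounting} applied to the constant tuple $(d_a)_{a\in\Sigma}$ and the converse handled by the identity $\tbinom{\bw_n^{(a)}}{\bu'}=\sum_{\bu}\tbinom{\bw_n}{\bu}$ combined with the second part of Theorem~\ref{thm:quasirandom}. Your write-up is in fact slightly more detailed than the paper's own proof, which merely points to these ingredients.
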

\begin{proof}
  The first part of the theorem follows from Remark~\ref{remark:alphabetcounting} {(choosing, for each $a\in\Sigma$, the $g^{a}$'s therein as the constant function $g^{a}=d_{a}$)} by an argument similar to the one used in {the first part of the proof} of Lemma~\ref{lem:tcount}.  For the second part, let us consider a letter $a\in\Sigma$ and a word $\bw$ over $\Sigma$. We define the binary word $\bw^{a}$ as the word obtained by replacing each letter $a$ in $\bw$ by $1$ and the remaining letters by $0$. Moreover, for $\bu\in\{0,1\}^\ell$ we let $\Sigma_{a}(\bu)$ be the set of words $\bv\in\Sigma^\ell$ such that $\bv^{a}=\bu$.
    Then, it is easy to see that
\begin{equation}\label{largeralphabet:convergence}
    t(\bu,\bw^a)=\sum_{\bv\in\Sigma_{a}(\bu)}t(\bv,\bw).
\end{equation}		
For each $a\in\Sigma$ we can thus define the sequence $(\bw_n^a)_{n\to\infty}$ of words over the alphabet $\{0,1\}$ which, because of~\eqref{largeralphabet:convergence} {and since $\sum_{b\neq a}d_b=1-d_a$}, satisfies the counting property for subsequences of length 3. From Theorem~\ref{thm:quasirandom} {and our working hypothesis} we conclude that $(\bw_n^a)_{n\to\infty}$ is $o(1)$-uniform over the alphabet $\{0,1\}$ and thus we deduce that $N_a(\bw_n, I)=N_1(\bw_n^a,I)= d_a|I|+o(1)n$ for all intervals $I\subseteq [n]$. By repeating the above argument for each letter in $\Sigma$ we conclude that $(\bw_n)_{n\to\infty}$ is $o(1)$-uniform.
\end{proof}

Similarly, one can obtain an analog of Theorem~\ref{thm:limits}
concerning limits of convergent word sequences for larger alphabets.
A sequence $(\bw_n)_{n\to\infty}$ of words over the
alphabet~$\Sigma=\{a_1,\dots,a_k\}$ is convergent if for all
{$\ell\in\NN$}
and $\bu\in\Sigma^\ell$ the subsequence density
$\left(\tbinom{\bw_n}\bu/ \tbinom n\ell\right)_{n\to\infty}$ converges. {Moreover, given a $k$-tuple of functions $\vecf=(f^{a_1},\dots,f^{a_k})\in\mathcal W^k$ such that $f^{a_1}(x)+\dots+f^{a_k}(x)=1$ for almost all $x\in [0,1]$,} we say that $(\bw_n)_{n\to\infty}$ converges to
$\vecf=(f^{a_1},\dots ,f^{a_k})$ if for all {$\ell\in\mathbb N$} and $\bu\in\Sigma^\ell$ the subsequence density $\left(\tbinom{\bw_n}\bu/ \tbinom
n\ell\right)_{n\to\infty}$ converges to
\[
t(\bu,\vecf)=\ell!\int_{0\leq x_1<\dots<x_\ell\leq 1}\prod_{i\in[\ell]}f^{u_i}(x_i)\diff x_1\dots \diff x_\ell.
\]
For the case of non-binary alphabets, we obtain the following limit theorem.  
\begin{theorem}[Limits of convergent {$k$-letter} word sequences]
  {Let  $\Sigma=\{a_1,\dots,a_k\}$.}
\begin{itemize}
\item Each convergent sequence $(\bw_n)_{n\to\infty}$ of words, $\bw_n\in\Sigma^n$, converges to some vector
$\vecf=(f^{a_1},\dots ,f^{a_k})\in\cW^k$ and $f^{a_1}(x)+\dots+f^{a_k}(x)=1$ for almost all $x\in[0,1]$.
Moreover, if  $(\bw_n)_{n\to\infty}$ converges to $\vecg=(g^{a_1},\dots ,g^{a_k})$, then {$f^{a_i}= g^{a_i}$ almost everywhere, for all $i\in[k]$.}
\item Conversely, for every  vector $\vecf=(f^{a_1},\dots ,f^{a_k})\in\cW^k$ which satisfies $f^{a_1}(x)+\dots+f^{a_k}(x)=1$ for almost all $x\in[0,1]$
there is a  sequence $(\bw_n)_{n\to\infty}$ of words $\bw_n\in\Sigma^n$  which converges to $\vecf$.
\end{itemize}
\end{theorem}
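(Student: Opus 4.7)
The plan is to reduce the $k$-letter case to the already-established binary Theorem~\ref{thm:limits} via the letter-by-letter binarization used in the proof of Theorem~\ref{thm:extensCount}. For each letter $a\in\Sigma$, I would form the binary sequence $(\bw_n^{a})_{n\to\infty}$ by mapping $a$ to $1$ and every other letter to $0$. The identity~\eqref{largeralphabet:convergence} expresses each subsequence density in $\bw_n^{a}$ as a non-negative linear combination of subsequence densities in $\bw_n$, so convergence of $(\bw_n)_{n\to\infty}$ forces convergence of each $(\bw_n^{a})_{n\to\infty}$ in the binary sense. Theorem~\ref{thm:limits} then furnishes a unique $f^{a}\in\cW$ with $\bw_n^{a}\to f^{a}$, and I would take $\vecf=(f^{a_1},\dots,f^{a_k})$ as the candidate limit.

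The sum-to-one constraint comes for free: since every position of $\bw_n$ carries exactly one letter, $\sum_{a\in\Sigma}f_{\bw_n^{a}}\equiv 1$ on $[0,1]$, and Proposition~\ref{prop:tconvboxconv} gives $f_{\bw_n^{a}}\overset{\Box}{\to} f^{a}$, so the triangle inequality forces $d_\Box\bigl(1,\sum_{a}f^{a}\bigr)=0$; since a bounded function with zero integral on every interval vanishes almost everywhere, $\sum_{a}f^{a}=1$ a.e. To verify that $(\bw_n)_{n\to\infty}$ actually converges to $\vecf$, I would note the pointwise identification $f^{a}_{\bw_n}=f_{\bw_n^{a}}$ between the $a$-coordinate of the function-vector associated to $\bw_n$ and the binary function associated to $\bw_n^a$, and then apply Remark~\ref{remark:alphabetcounting} (the $k$-letter analogue of Lemma~\ref{lem:tcount}) to the coordinate-wise $d_\Box$-convergence to conclude $t(\bu,\vecf_{\bw_n})\to t(\bu,\vecf)$ for every word $\bu$. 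The $O(n^{-1})$ gap between $t(\bu,\vecf_{\bw_n})$ and $t(\bu,\bw_n)$ is closed via the straightforward extension of~\eqref{eq:countcount} to arbitrary alphabets, whose proof is identical to the binary case.

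For uniqueness I would mimic the argument of Lemma~\ref{lem:f=g} coordinate by coordinate. A Fubini computation exactly as in that lemma gives, for each $a\in\Sigma$ and each integer $m\geq 0$,
\[
\int_{0}^{1} f^{a}(x)\,x^{m}\,\diff x=\frac{1}{m+1}\sum_{\bu\in\Sigma^{m}}t(u_{1}\dots u_{m}a,\vecf),
\]
so every moment of $f^{a}$ is determined by the subsequence densities of $\vecf$; the Stone--Weierstrass theorem then forces $f^{a}=g^{a}$ almost everywhere whenever $t(\cdot,\vecf)=t(\cdot,\vecg)$.

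For the converse direction, I would extend the random-word construction of Corollary~\ref{cor:randomwords}. Given $\vecf$ as in the hypothesis, define an $\vecf$-random letter $(X,Y)\in[0,1]\times\Sigma$ with $X$ uniform on $[0,1]$ and $\PP(Y=a\mid X=x)=f^{a}(x)$, and let $\sub(n,\vecf)$ be the word obtained from $n$ independent $\vecf$-random letters by sorting on the $X$-coordinate. Running the proof of Lemma~\ref{lem:randomwords} separately for each of the $k$ letters and taking a union bound yields a tail estimate of the form $\PP\bigl(\max_{a\in\Sigma}d_\Box(f^{a}_{n},f^{a})\geq 8\alpha\bigr)\leq 4kn\exp(-2\alpha^{2}n)$, from which Borel--Cantelli together with Remark~\ref{remark:alphabetcounting} and the generalized~\eqref{eq:countcount} delivers the almost-sure convergence of $(\sub(n,\vecf))_{n\to\infty}$ to $\vecf$. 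I do not expect a serious obstacle: the only mildly delicate point is propagating the sum-to-one constraint across the $d_\Box$-limit, which is handled as indicated above.
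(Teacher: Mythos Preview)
Your proposal is correct and follows essentially the same route as the paper: binarize letter by letter via the construction $\bw_n\mapsto\bw_n^{a}$, invoke Theorem~\ref{thm:limits} and Proposition~\ref{prop:tconvboxconv} coordinatewise, and then assemble the $k$-letter conclusion through Remark~\ref{remark:alphabetcounting} and the $k$-letter analogue of~\eqref{eq:countcount}; the converse is likewise handled in both by the $\vecf$-random word. Your treatment is somewhat more explicit than the paper's in two places---the sum-to-one constraint (which the paper dismisses as ``not hard to see'') and uniqueness (where you give a direct moment computation \`a la Lemma~\ref{lem:f=g}, whereas the paper implicitly reduces to binary uniqueness via~\eqref{largeralphabet:convergence})---but these are elaborations rather than a different strategy.
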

\begin{proof}The first part follows by reducing to the size two alphabet case. Indeed,  fix $a_i\in\Sigma$. For each $n\in\mathbb N$ we define the word $\bw_n^{a_i}$ as in the proof of Theorem~\ref{thm:extensCount} and thus we  obtain a sequence  $(\bw_n^{a_i})_{n\to\infty}$ of words over the binary alphabet, which we claim is convergent. Indeed, since $(\bw_n)_{n\to\infty}$ is convergent then each term in the RHS in~\eqref{largeralphabet:convergence} is convergent and thus $(t(\bu,\bw_n^{a_i}))_{n\to\infty}$ is convergent.  Therefore,
	Theorem~\ref{thm:limits} implies that $(\bw_n^{a_i})_{n\to\infty}$
	converges to a (unique) $f^{a_i}\in\cW$. 
	In particular, {by~\eqref{eq:countcount},} the sequence $(f_n^{a_i})_{n\to\infty}$
	of functions associated to $(\bw_n^{a_i})_{n\to\infty}$ satisfies $f_n^{a_i}\overset{t}\to f^{a_i}$ and Proposition~\ref{prop:tconvboxconv} implies that $f_n^{a_i}\overset{\Box}\to f^{a_i}$ as well.
	The $k$-letters analog of Lemma~\ref{lem:tcount}, see Remark~\ref{remark:alphabetcounting}, and the analog of~\eqref{eq:countcount} for $k$-letters\footnote{Note that the proof of~\eqref{eq:countcount} given in the footnote~\ref{foot:countcount} extends without change to the $k$-letters case.} then yields that $(\bw_n)_{n\to\infty}$  converges to $\vecf=(f^{a_1},\dots ,f^{a_k})$ and it is 
	not hard to see that $f^{a_1}(x)+\dots+f^{a_k}(x)=1$ for almost all $x\in[0,1]$.

        To prove the second part, we exhibit a sequence of words which converges to a given  $\vecf=(f^{a_1},\dots ,f^{a_k})$. Consider the $\vecf$-random letter $(X,Y)\in[0,1]\times \Sigma$ obtained  by choosing $X$ uniformly in $[0,1]$ and, conditioned on $X=x$, choosing $Y$ to be $a_i\in\Sigma$ with probability $f^{a_i}(x)$. {Next, for each positive integer $n$} choose $\vecf$-random letters $(X_1,Y_1),\dots (X_n,Y_n)$ and a permutation $\sigma:[n]\to [n]$ such that $X_{\sigma(1)}\le\dots\leq X_{\sigma(n)}$. Then, define the $\vecf$-random word $\bw_n=Y_{\sigma(1)}\dots Y_{\sigma(n)}$.
	By fixing a letter $a_i\in\Sigma$ and replacing the $\bw_n$'s by $\bw_n^{a_i}$'s as above we obtain a sequence of $f^{a_i}$-random words over size two alphabets whose associated functions converge in the interval-norm to $f^{a_i}$ a.s.~due  to Corollary~\ref{cor:randomwords}. Then, 
	the argument shown in Lemma~\ref{lem:tcount}, see Remark~\ref{remark:alphabetcounting}, and the $k$-letters analog 
	of~\eqref{eq:countcount} imply that the $\vecf$-random word sequence converges to $\vecf$.
\end{proof}

\subsection{Multidimensional arrays.}
For $n,d\ge 1$, a $d$-dimensional $\{0,1\}$-array, $d$-array for short, of size $n$ is a function $A:[n]^d\to\{0,1\}$ which labels each element of $[n]^d$ with a $0$ or $1$. Note that for $d=1$ a $1$-array of size $n$ is just an $n$-letter word, and for $d=2$ a $2$-dimensional array is just a $n$-by-$n$ zero-one matrix. In general, given $d\ge 1$ and $\vec m=(m_1,\dots,m_d)\in\mathbb N^d$ a $d$-array of index $\vec m$  is a labeling $B:[m_1]\times\dotsb\times[m_d]\to\{0,1\}$. As in the other cases considered so far, we need to say what will be the notion of sub-array.
First, consider the $d=2$ case, that is, the case of matrices. We say that a matrix $A$ contains a copy of a matrix $B$ if by deleting rows and columns from $A$ one ends with the matrix $B$. In other words, we say that $B\in \{0,1\}^{k\times m}$ is a sub-array of $A\in\{0,1\}^{n\times n}$ if there are indices $1\le i_1<\dots<i_k\le n$ and $1\le j_1<\dots<j_m\le n$ such that $A_{i_r,j_s}=B_{r,s}$ for all $r\in[k]$ and $s\in[m]$. For higher dimensional arrays the idea is similar. We say that a $d$-array $A$ of size $n$ contains a copy of a $d$-array $B$ of index $\vec m\in[n]^d$ if there exists a set of indices 
\[L=\{(i^1_{j_{1}},\dots,i^d_{j_{d}})\in [n]^d: j_1\in[m_1],\dots,j_d\in [m_d]\},\]
with $i^k_{1}<\dots<i^k_{m_k}$ for each $k\in[d]$, such that $A|_L=B$. We denote by $\binom{A}{B}$ the number of copies of $B$ in $A$ and write
$t(B,A)$ for the density of $B$ in $A$, i.e.,
\[ t(B,A)=\dfrac{\binom{A}{B}}{\binom{n}{m_1}\dots \binom{n}{m_d}}.\] 

As we did for words, we can define a notion of convergence for $d$-arrays in terms of sub-array densities. We say that a sequence $(A_n)_{n\to\infty}$ of $d$-arrays, with $A_n\in\{0,1\}^{[n]^d}$ for each $n\in\mathbb N$, is $t$-convergent if for every $d$-array $B$ the sequence $(t(B,A_n)))_{n\to\infty}$ converges. Along the same lines of the proof of Theorem~\ref{thm:limits}, one can show that $t$-convergence is ``equivalent" to a higher order interval-distance and thus one can prove that every $t$-convergent sequence of $d$-arrays $(A_n)_{n\to\infty}$ converges to a Lebesgue measurable function $f:[0,1]^d\to[0,1]$. Moreover, for every Lebesgue measurable function $f:[0,1]^d\to[0,1]$ there exists a sequence of $d$-arrays, which arise from a random sampling from $f$, that converges to $f$ a.s.

\section{Concluding remarks}\label{sec:final}
We conclude with a discussion on some potential future research directions.
A variety of applications use data structures and algorithms on
strings/words. In many settings, it is reasonable to assume
that strings are generated by a random source of known characteristics.
Several basic (generic) probabilistic models have been proposed and are
often encountered in the analysis of problems on words, among others;
memoryless Markov, mixing and ergodic sources (for a detailed
discussion see~\cite{SzpankowskiBook}). Our investigations suggest that
a new probabilistic model for generating strings under which to analyze
the behavior of algorithms on words is the random words from limits
model of Section 4.4 (i.e., for $f\in\cW$, the sequence of distributions
on words $(\sub(n,f))_{n\in\NN}$).
For instance, one may consider variants of classical long-standing
open problems on words such as the Longest Common Subsequence (LCS) problem,
for which  it was shown~\cite{CS75} in the mid 70's that 
two random words uniformly chosen in $\{0,1\}^n$
have a LCS of size proportional to $n$ plus low order terms.
The exact value of the proportionality constant remains unknown,
although good upper and lower bounds have been established~\cite{Lueker09}.
Generalizing this model, one may consider two random strings $\sub(n,f_1)$ and
$\sub(n,f_2)$ and
  ask for conditions on $f_1,f_2\in\cW$ so that the expected length of the
longest common subsequence is of size
$o(n)$.

\medskip
\textbf{Acknowledgments:}
We would like to thank Svante Janson, Yoshiharu Kohayakawa and Jaime San Mart\'{\i}n
for valuable discussions and suggestions.
We also would like to thank an anonymous referee for detailed comments on a previous version of the paper.

A preliminary version of this
work will appear in Proceedings of the Latin American Symposium on
Theoretical Computer Science, LATIN 2020.

\bibliographystyle{acm}
\bibliography{biblio} 

\appendix
\section{}\label{sec:appendix}
In this section we give an alternative proof of Theorem~\ref{cor:compact} based on the regularity lemma for words which was introduced by Axenovich, Puzynina and Person in~\cite{APP13} to study decomposition of words into identical subsequences. For completeness, we give an (analytic) proof of the regularity lemma.

 A measurable partition $\cP$ of $[0,1]$ is a partition in which each atom is a measurable set of positive measure. Moreover, we say that $\cP$ is an interval partition if every atom in $\cP$ is a non-degenerate interval. In what follows, we will only consider measurable partitions $\cP$ with a finite number of atoms which we denote by $|\cP|$.
Given two partitions $\cP$ and $\cQ$ we say that $\cQ$ refines $\cP$, which we denote by $\cQ\preceq\cP$, if for every  $P\in \cP$ there are atoms $Q_1,\dots, Q_k\in\cQ$ such that $P=Q_1\cup \dots\cup Q_k$.  The common refinement of $\cP$ and $\cQ$ is the partition 
\[
\cP\wedge \cQ
  =\{A\cap B: A\in\cP, B\in\cQ\text{ such that }A\cap B\not=\emptyset \}.
\]
Moreover, given a measurable set $A$ we define the refinement of $\cP$ by $A$ as the common refinement of $\cP$ and the partition $\{A,A^c\}$. 

Let $f:[0,1]\to\mathbb{R}$ be an {integrable} function and let $\cP$ be a partition. As usual, let $\lambda$ denote the Lebesgue measure on $\mathbb R$. The conditional expectation of $f$ with respect to $\cP$ is the function $\EE(f|\cP)$ defined as
\[\EE(f|\cP)(x)=\sum_{P\in\cP}\frac{\vecone_{P(x)}}{\lambda(P)}\int_Pf(t)\diff t,
\]
for all $x\in[0,1]$. The energy of $\cP$ with respect to $f$ is defined by 
\[
\mathcal E_f(\cP)=\int_0^1\big(\EE(f|\cP)(x)\big)^2\diff x.
\]
Note that $\mathcal E_f(\cP)\leq \|f\|_\infty^2$.
The following is a well known (and easily derived)
result about conditional expectations.  
\begin{lemma}\label{lem:cond}
  Let $\cP$ and $\cQ$ be two partitions such that $\cQ\preceq\cP$.
  Given any {integrable}
  function $f:[0,1]\to\RR$, we have 
  \[\pushQED{\qed} 
  \int_0^1\EE (f|\cP)(t)\EE (f|\cQ)(t)\diff t=\int_0^1\big(\EE (f|\cP)(t)\big)^2\diff t.\qedhere
  \popQED
  \]
\end{lemma}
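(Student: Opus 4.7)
The plan is a direct computation from the definitions. Since $\cQ \preceq \cP$, for each atom $P\in\cP$ there is a collection $\cQ_P\subseteq\cQ$ of atoms of $\cQ$ contained in $P$ such that $P$ is the disjoint union of the elements of $\cQ_P$. In particular, $\lambda(P)=\sum_{Q\in\cQ_P}\lambda(Q)$ and $\int_P f = \sum_{Q\in\cQ_P}\int_Q f$. Moreover, the two functions $\EE(f|\cP)$ and $\EE(f|\cQ)$ are constant on every atom of $\cQ$, taking values $\tfrac{1}{\lambda(P)}\int_P f$ on $Q\subseteq P$ and $\tfrac{1}{\lambda(Q)}\int_Q f$ on $Q$, respectively. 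Both functions are therefore bounded, so the product is integrable even when $f$ is merely integrable, and all the sums below are finite.

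The main step is then to expand the left-hand side as a sum over atoms of the finer partition $\cQ$:
\[
\int_0^1 \EE(f|\cP)(t)\,\EE(f|\cQ)(t)\diff t
= \sum_{P\in\cP}\sum_{Q\in\cQ_P} \lambda(Q)\cdot \frac{1}{\lambda(P)}\int_P f \cdot \frac{1}{\lambda(Q)}\int_Q f.
\]
After cancelling the factor $\lambda(Q)$ inside each inner sum and pulling $\tfrac{1}{\lambda(P)}\int_P f$ (which does not depend on $Q$) outside, the inner sum collapses via $\sum_{Q\in\cQ_P}\int_Q f = \int_P f$. One is left with
\[
\sum_{P\in\cP}\frac{1}{\lambda(P)}\Big(\int_P f\Big)^2
= \int_0^1 \big(\EE(f|\cP)(t)\big)^2\diff t,
\]
which is the desired identity.

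There is no real obstacle: this is just the step-function version of the tower property, which says that $\EE(f|\cP)$ is $\cQ$-measurable (because $\cP$-measurable functions are measurable with respect to any refinement) and hence behaves as a constant when passing through the conditional expectation $\EE(\cdot\mid\cQ)$.
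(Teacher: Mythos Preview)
Your proof is correct. The paper does not actually prove this lemma: it is stated as ``well known (and easily derived)'' with a \qedsymbol{} appended to the statement, so there is nothing to compare against; your direct step-function computation is exactly the kind of elementary verification the authors had in mind.
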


Our next result shows that every {$[0,1]$-valued} {integrable} function {over the interval $[0,1]$} can be approximated by a step function, which is supported on a partition of ``bounded complexity"{ (a somewhat related result by Feige et al., the so called Local Repetition Lemma, was obtained in~\cite[Lemma~2.4]{FKT17})}.

\begin{theorem}(Weak regularity lemma)\label{thm:reg} Let $\eps>0$ and let $\cP$ be an interval partition  of $[0,1]$. For every {integrable} function $f:[0,1]\to\mathbb [0,1]$ there exists an interval partition $\cP_\eps\preceq\cP$ such that  $\|f-\EE(f|\cP_\eps)\|_\Box\le \eps$  and $|\cP_\eps|\le |\cP|+2{\eps^{-2}}$.
\end{theorem}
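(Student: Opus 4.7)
The plan is to use the standard energy-increment argument, adapted to interval partitions. Setting $\cP_0 := \cP$, I would construct a sequence of refinements $\cP_0 \succeq \cP_1 \succeq \cP_2 \succeq \dots$ as follows. If $\|f - \EE(f|\cP_i)\|_\Box \leq \eps$, then I take $\cP_\eps := \cP_i$ and stop; otherwise, by definition of the interval-norm, there exists an interval $I_i \subseteq [0,1]$ with $\big|\int_{I_i} (f - \EE(f|\cP_i))\diff t\big| > \eps$, and I let $\cP_{i+1}$ be the interval partition obtained from $\cP_i$ by splitting (at the endpoints of $I_i$) the at most two atoms of $\cP_i$ in which those endpoints lie. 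This ensures $\cP_{i+1} \preceq \cP_i$, that $\cP_{i+1}$ remains an interval partition, and that $|\cP_{i+1}| \leq |\cP_i| + 2$.

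The crucial step, and the main technical content of the argument, is the energy-increment bound $\mathcal E_f(\cP_{i+1}) - \mathcal E_f(\cP_i) > \eps^2$. Since by construction $I_i$ is a union of atoms of $\cP_{i+1}$, the defining property of conditional expectation gives $\int_{I_i} \EE(f|\cP_{i+1})\diff t = \int_{I_i} f\diff t$, so
\[
\Big|\int_{I_i} \big(\EE(f|\cP_{i+1}) - \EE(f|\cP_i)\big)\diff t\Big| = \Big|\int_{I_i} \big(f - \EE(f|\cP_i)\big)\diff t\Big| > \eps.
\]
Applying the Cauchy--Schwarz inequality (and using $\lambda(I_i) \leq 1$) then yields $\|\EE(f|\cP_{i+1}) - \EE(f|\cP_i)\|_2^2 > \eps^2$. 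Expanding the square and invoking Lemma~\ref{lem:cond} with $\cQ = \cP_{i+1}$ and $\cP = \cP_i$, one has $\int \EE(f|\cP_i)\EE(f|\cP_{i+1})\diff t = \mathcal E_f(\cP_i)$, whence the left-hand side collapses to $\mathcal E_f(\cP_{i+1}) - \mathcal E_f(\cP_i)$, establishing the desired increment.

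To conclude, since $f$ takes values in $[0,1]$ we have $0 \leq \mathcal E_f(\cP_i) \leq \|f\|_\infty^2 \leq 1$ for every $i$, so the increment of at least $\eps^2$ per iteration forces the process to terminate after at most $\eps^{-2}$ steps. The resulting partition $\cP_\eps$ is an interval partition refining $\cP$ and satisfies $|\cP_\eps| \leq |\cP| + 2\eps^{-2}$. I expect the main care point to be verifying that splitting by the two endpoints of $I_i$ really suffices to make $I_i$ a union of atoms of $\cP_{i+1}$ (rather than requiring a global refinement by $I_i$ itself), which is exactly what keeps the partition a genuine interval partition and caps the atom-count growth at $2$ per step.
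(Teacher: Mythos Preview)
Your proposal is correct and follows essentially the same energy-increment argument as the paper: find a witnessing interval, refine by its endpoints (the paper phrases this as taking the common refinement with $\{I_{k+1},I_{k+1}^c\}$, which for interval partitions amounts to the same thing), use Cauchy--Schwarz together with Lemma~\ref{lem:cond} to extract an $\eps^2$ energy gain, and terminate in at most $\eps^{-2}$ steps since the energy is bounded by $1$. Your write-up is, if anything, slightly more explicit than the paper's about why $I_i$ becomes a union of atoms of the refined partition.
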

\begin{proof}Set $\cP_1=\cP$ and suppose that $\|f-\EE(f|\cP_1)\|_\Box>\eps$, as otherwise the result is trivial. For $k\ge 1$, assume we have defined a sequence of interval partitions $\cP_k\preceq\dots\preceq\cP_1$ such that $\|f-\EE(f|\cP_k)\|_\Box>\eps$. This implies that there is an interval $I_{k+1}\not\in\cP_k$ such that
  \begin{equation}\label{eq:reg}
    \Big|\int_{I_{k+1}} (f-\EE(f|\cP_k))(t)\diff t\Big|>\eps.
  \end{equation}
  Define $\cP_{k+1}$ as the refinement of $\cP$ by $I_{k+1}$.
  Since either $I_{k+1}$ can split two distinct
    intervals of $\cP_k$ into two subintervals each, or split a single interval
    of $\cP_k$ into three subintervals, we have
   $|\cP_{k+1}|\le |\cP_{k}|+2$.
  From~\eqref{eq:reg} and by the Cauchy-Schwarz inequality, we deduce that 
\begin{align*}
  \eps^2 & <
  \left(\int_{I_{k+1}}\big(\EE(f|\cP_{k+1})(t)-\EE(f|\cP_k)(t)\big)\diff t\right)^2 \\
  & \le
  \int_0^1\Big(\EE(f|\cP_{k+1})(t)-\EE(f|\cP_k)(t)\Big)^2\diff t \\
  & = \int_{0}^1\Big(\EE(f|\cP_{k+1})(t)\Big)^2\diff t
  -\int_{0}^1\Big(\EE(f|\cP_{k})(t)\Big)^2\diff t,
\end{align*}
where the last equality follows from Lemma~\ref{lem:cond}. Thus we have
\[
1\ge\|f\|_{\infty}^2\geq \mathcal E_f(\cP_{k+1})\ge \mathcal E_f(\cP_{k})+\eps^2,
\]
and so, after at most $\eps^{-2}$ iterations, one finds some $\ell\le \eps^{-2}+1$ which satisfies $\|f-\EE(f|\cP_\ell)\|_\Box\le\eps$.
Since $|\cP_k|\le |\cP_{k+1}|+2$
for every $k\in[\ell]$, we get the claimed upper bound for $|\cP_\ell|$.
\end{proof}

\begin{lemma}[Theorem~35.5 from~\cite{Billingsley}]\label{lem:martingale}Let $f:[0,1]\to\mathbb R$ be an integrable function, and let $(\cP_i)_{i\in\NN}$ be a sequence of partitions such that $\cP_{i+1}\preceq\cP_i$ for all $i\in\NN$. Then the sequence $(\EE(f|\cP_i))_{i\in\NN}$ converges a.e.~to $\EE(f|\cP_\infty)$, where $\cP_\infty$ is the smallest $\sigma$-algebra containing each atom in $(\cP_i)_{i\in\NN}$.
\end{lemma}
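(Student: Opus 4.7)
The plan is to recognise $M_i := \EE(f\mid \cP_i)$ as a Doob martingale with respect to the filtration $\cF_i := \sigma(\cP_i)$ and then to invoke the (forward) martingale convergence theorem. Because $\cP_{i+1}\preceq \cP_i$, each atom of $\cP_i$ is a disjoint union of atoms of $\cP_{i+1}$, so $\cF_i\subseteq \cF_{i+1}$. The tower property then immediately gives
\[
\EE(M_{i+1}\mid\cF_i)=\EE\bigl(\EE(f\mid\cF_{i+1})\mid\cF_i\bigr)=\EE(f\mid\cF_i)=M_i,
\]
establishing the martingale property. (Concretely, this can be checked directly: for any atom $P\in\cP_i$, writing $P=Q_1\cup\dots\cup Q_k$ with $Q_j\in\cP_{i+1}$, one has $\int_P M_{i+1}=\sum_j\int_{Q_j}f=\int_P f=\int_P M_i$.)

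Next, I would verify $L^1$-boundedness. Since conditional expectation is an $L^1$-contraction, $\EE|M_i|\le \EE\bigl(\EE(|f|\mid\cF_i)\bigr)=\EE|f|<\infty$. Doob's martingale convergence theorem (the technical heart of the argument, proved via the upcrossing inequality) then gives an $\cF_\infty$-measurable random variable $M_\infty$ with $M_i\to M_\infty$ almost everywhere. This is the main obstacle if one were to prove the result from scratch; but the statement is cited from Billingsley, so it is legitimate simply to invoke it.

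It remains to identify $M_\infty$ with $\EE(f\mid\cP_\infty)$. For this I would use uniform integrability: a Doob martingale $\EE(f\mid\cF_i)$ driven by a fixed integrable $f$ is uniformly integrable, so a.s.~convergence upgrades (Vitali) to $L^1$-convergence $M_i\to M_\infty$. Consequently, for every $A\in\bigcup_i \cF_i$, fix $i_0$ with $A\in\cF_{i_0}$; then for all $i\ge i_0$,
\[
\int_A f\diff\lambda=\int_A M_i\diff\lambda\xrightarrow[i\to\infty]{}\int_A M_\infty\diff\lambda.
\]
Thus $\int_A M_\infty=\int_A f$ on the algebra $\bigcup_i\cF_i$; a standard monotone class / Dynkin $\pi$-$\lambda$ argument extends this identity to all $A\in\cF_\infty$. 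Since $M_\infty$ is $\cF_\infty=\cP_\infty$-measurable (being an a.s.~limit of $\cF_\infty$-measurable functions), the defining property of conditional expectation yields $M_\infty=\EE(f\mid\cP_\infty)$ almost everywhere, completing the proof.

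Since the lemma is cited verbatim from Billingsley, the paper itself likely omits the argument; the sketch above is how one would reconstruct it by packaging the filtration observation, Doob's convergence theorem, and a monotone class uniqueness step.
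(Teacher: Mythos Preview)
Your proposal is correct, and you anticipated the situation exactly: the paper states this lemma as a citation of Theorem~35.5 in Billingsley and provides no proof of its own. Your reconstruction via the Doob martingale, uniform integrability, and a monotone class identification of the limit is the standard argument and is entirely sound.
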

{Before providing an alternative proof of Theorem~\ref{cor:compact}, we state some basic results from functional analysis. Given a normed vector space $(X,\|\cdot\|)$, the dual space $X^*$ of $X$ is the vector space of all linear and continuos functions from $X$ to $\mathbb R$. It turns out that $X^*$ is a normed vector space endowed with the operator norm $\|\varphi\|^*=\sup\{|\varphi(x)|\colon\|x\|=1\}$. On the other hand, the weak$^*$ topology on~$X^*$ is defined as the smallest topology that makes the functionals $\varphi\mapsto \varphi (x)$ continuous for all $x\in X$. In particular, a sequence $(\varphi_n)_{n\in\mathbb N}\subseteq X^*$ converges to $\varphi\in X^*$ in the weak$^*$ topology if and only if $\varphi_n(x)\to \varphi(x)$ for all $x\in X$. One of the main reasons to use the weak$^*$ topology instead of the operator norm topology is the Banach--Alaoglu theorem (see Theorem~5.18 from~\cite{Folland}), which states that the unit ball $B=\{\varphi\in X^*\colon\|\varphi\|^*\le 1\}$ is compact in the weak$^*$ topology.}

{A classical result in functional analysis states that $L^1([0,1])^*$ is isomorphic to $L^{\infty}([0,1])$ (see Theorem~ 6.15 from~\cite{Folland}). Thus, a sequence $(f_n)_{n\in\mathbb N}$ in $L^{\infty}([0,1])$ converges in the weak$^*$ topology if for every $g\in L^1([0,1])$ we have
\begin{equation}\label{weak:convergence}\lim_{n\to\infty}\int_0^1f_n(x)g(x)\diff x=\int_0^1f(x)g(x)\diff x.
\end{equation}  
It is easily shown that $\mathcal W$ is a weak$^*$ closed subset of the unit ball in $L^{\infty}([0,1])$, i.e., if a sequence $(f_n)_{n\in\mathbb N}$ in $\mathcal W$ converges to $f\in L^{\infty}([0,1])$ in the weak$^*$ topology then $f\in\mathcal W$. Indeed, letting $U_\eps=\{x\in[0,1]\colon f(x)\ge 1+\eps\}$ and $g=\vecone_{U_\eps}$ in~\eqref{weak:convergence} we see that $(1+\eps)\lambda(U_\eps)\le \lambda (U_\eps)$ which implies that $\lambda(U_\eps)=0$ for any $\eps >0$ and so $f(x)\le 1$ almost everywhere. On the other hand, letting $V_\eps=\{x\in[0,1]\colon f(x)\le -\eps\}$ and $g=\vecone_{V_\eps}$ we have $0\le -\eps\lambda(V_\eps)$ which implies $\lambda (V_\eps)=0$ for any given $\eps>0$, and thus $0\le f(x)\le 1$ almost everywhere.}

With these facts at hand we now give an alternative proof of the compactness of $(\cW,d_{\Box})$.
\begin{proof}[Proof of Theorem~\ref{cor:compact}]
  Let $(f_n)_{n\in\NN}$ be any sequence in $\mathcal W$. Since $\mathcal W$ is a weak$^*$ closed subset of the unit ball in $L^{\infty}([0,1])$, by the Banach--Alaoglu theorem we may assume that $(f_n)_{n\in\NN}$ converges in the weak$^*$ topology to some $f\in \mathcal W$. We claim that there are a collection of subsequences $(f_{n,k})_{n\in\NN}$, for $k\in \NN$, satisfying the following properties.

  \begin{enumerate}[(i)]
  \item {$f_{n,0}=f_n$ for all $n\in\NN$ and $\cP_0=\{[0,1]\}$.}
  \item\label{I:1} {For every $k\geq 1$, the sequence}
    $(f_{n,k})_{n\in\NN}$ is a subsequence of
    $(f_{n,k-1})_{n\in\NN}$.
		
\item\label{I:2} For $k\ge 1$, there is an interval partition
  $\cP_k\preceq \cP_{k-1}$ such that $|\cP_k|\le 3k^3$ and 
  $\|f_{n,k}-\EE(f_{n,k}|\cP_k)\|_\Box\le\tfrac{1}{k}$ for every $n\in\NN$. 
		
\item\label{I:3} For all $k\ge 1$, the sequence
  $(\EE(f_{n,k}|\cP_k))_{n\in\NN}$ converges a.e.~to $f^\ast_k=\EE(f|\cP_k)$.
  \end{enumerate}
 Clearly,
 properties \eqref{I:1}, \eqref{I:2}, and~\eqref{I:3} hold vacuously for $k=0$. Assume we have constructed the sequence up to step $k$. We apply
Theorem~\ref{thm:reg}, with $\eps=\tfrac{1}{k+1}$ and initial
partition $\cP_k$, to each function in the sequence $(f_{n,k})_{n\in\NN}$ so that for
every $n\in\NN$ we get an interval partition $\cP_{n,k}\preceq
\cP_{k}$, with $|\cP_{n,k}|\le |\cP_k|+2(k+1)^2\le 3(k+1)^3$ and such that
$\|f_{n,k}-\EE(f_{n,k}|\cP_{n,k})\|_\Box\le\tfrac{1}{k+1}$. For
$n\in\NN$, let $J_{n,k}=\{a_{n,1}=0<\dots<a_{n,\ell_n}=1\}$ be the set
of points that define the intervals of $\cP_{n,k}$. Note that
$\ell_n\le 1+3(k+1)^3$. By the pigeonhole principle there is an integer
$\ell\le 1+3(k+1)^3$ and a subsequence $(f_{n,k+1})_{n\in\NN}$ of $(f_{n,k})_{n\in\NN}$ such that
$\ell_{n}=\ell$ for all $n\in\NN$. Moreover, since $[0,1]$ is compact
we may even assume that $a_{n,i}\to a_i$ for each $i\in[\ell]$, where
$a_1=0\le\dots\le a_\ell=1$. Let $\cP_{k+1}\preceq \cP_k$ be the
partition defined by $J_k=\{a_1\le\dots\le a_\ell\}=\{a'_1<\dots <a'_{\ell'}\}$ for some $\ell'\le \ell$. Note that~\eqref{I:1}
and~\eqref{I:2} hold because of the definition of
$(f_{n,k+1})_{n\in\NN}$. Furthermore, because $\cP_{k+1}$ is finite
and since $(f_{n,k+1})_{n\in\NN}$ converges in the weak$^*$ topology to $f$ we conclude
that \eqref{I:3} also holds. 

Finally, by
Lemma~\ref{lem:martingale} we deduce that the sequence
$(f^\ast_k)_{k\in\NN}$ converges a.e.~to
$f_\infty=\EE(f|\cP_\infty)$. We claim that
$\lim_{k\to\infty}d_\Box(f_{k,k},f_\infty)= 0$. Indeed,
{given} $\eta>0$, using the dominated convergence theorem, \eqref{I:2} and~\eqref{I:3}, we have  for large $k\ge m\ge 3\eta^{-1}$ 
\[ d_\Box(f_\infty,f_{k,k})
\le d_\Box(f_\infty,f^\ast_m)
+ d_\Box(f^\ast_m,\EE(f_{k,k}|\cP_m))
+d_\Box(\EE(f_{k,k}|\cP_m),f_{k,k})
  \le \frac{\eta}{3}+\frac 1m+\frac{\eta}{3}
  \le\eta,
\]
{where the second inequality follow from the fact that $(f_{k,k})_{k\ge m}$ is a subsequence of $(f_{n,m})_{n\in\mathbb N}$.}
\end{proof}

\end{document}